\newif\ifdraft\draftfalse
\tikzset{
  symbol/.style={
    draw=none,
    every to/.append style={
      edge node={node [sloped, allow upside down, auto=false]{$#1$}}}
  }
}
\ifdraft\usepackage{showkeys}\else\fi
\newtheorem*{namedtheorem}{\theoremname}
\newcommand{\theoremname}{testing}
\newtheorem{theorem}[equation]{Theorem}
\newtheorem{proposition}[equation]{Proposition}
\newtheorem{proposition-definition}[equation]{Proposition-Definition}
\newtheorem{corollary}[equation]{Corollary}
\newtheorem{lemma}[equation]{Lemma}
\newtheorem{nec-cond}[equation]{Necessary Condition}
\theoremstyle{definition}
\newtheorem{definition}[equation]{Definition}
\newtheorem{conjecture}[equation]{Conjecture}
\newtheorem{example}[equation]{Example}
\newtheorem{remark}[equation]{Remark}
\theoremstyle{remark}
\newcommand\arr{\ifinner\to\else\longrightarrow\fi}
\newcommand\arrto{\ifinner\mapsto\else\longmapsto\fi}
\newcommand\Dcech{{\widecheck D}}
\newcommand\im{\operatorname{im}}
\def\displaytimes_#1{\mathrel{\mathop{\times}\limits_{#1}}}
\def\displayotimes_#1{\mathrel{\mathop{\bigotimes}\limits_{#1}}}
\newcommand\aut{\operatorname{Aut}}
\newcommand\spec{\operatorname{Spec}}
\newcommand\Spec{\operatorname{Spec}}
\newcommand{\fr}{\operatorname{Frac}}
\newlength{\ignora}
\renewcommand{\setminus}{\smallsetminus}
\newcommand\af{\mathbb{A}^f}
\newcommand\afp{\mathbb{A}^{f,p}}
\newcommand\Sp{\mathbf{Sp}}
\DeclareFontFamily{U}{mathx}{\hyphenchar\font45}
\DeclareFontShape{U}{mathx}{m}{n}{
      <5> <6> <7> <8> <9> <10>
      <10.95> <12> <14.4> <17.28> <20.74> <24.88>
      mathx10
      }{}
\DeclareSymbolFont{mathx}{U}{mathx}{m}{n}
\DeclareMathAccent{\widecheck}{0}{mathx}{"71}
\DeclareMathAccent{\wideparen}{0}{mathx}{"75}
\renewcommand{\epsilon}{\varepsilon}
\newcommand{\sslash}{\mathbin{/\mkern-6mu/}}
\newcommand\ad{\mathrm{ad}}
\newcommand\sep{\mathrm{sep}}
\DeclareMathOperator\coh{\mathrm{H}}
\DeclareMathOperator\Gal{\mathrm{Gal}}
\DeclareMathOperator\Par{\mathrm{Par}}
\DeclareMathOperator\Dyn{\mathrm{Dyn}}
\DeclareMathOperator\Sym{\mathrm{Sym}}
\DeclareMathOperator\ed{\mathrm{ed}}
\DeclareMathOperator\car{\mathrm{char}}
\DeclareMathOperator\Res{\mathrm{Res}}
\DeclareMathOperator\Aut{\mathrm{Aut}}
\newcommand{\cha}{\operatorname{char}}
\DeclareMathOperator\Mod{\mathrm{Mod}}
\renewcommand{\geq}{\geqslant}
\renewcommand{\leq}{\leqslant}
\newcommand{\bs}{\backslash}
\newcommand{\ovl}{\overline}
\newcommand{\mc}{\mathcal}
\newcommand{\mb}{\mathbb}
\newcommand{\mf}{\mathbf}
\begin{document}

\title[Fixed points, local monodromy and incompressibility]{Fixed
  points, local monodromy, and incompressibility of congruence covers}

\author{Patrick Brosnan}
\address[Brosnan]{Department of Mathematics\\
1301 Mathematics Building\\
University of Maryland\\
College Park, MD 20742-4015\\
USA}
\email{pbrosnan@umd.edu}
\author{Najmuddin Fakhruddin} \address[Fakhruddin]{School of
  Mathematics, Tata Institute 
  of Fundamental Research, Homi Bhabha Road, Mumbai 400005, INDIA}
\email{naf@math.tifr.res.in}


\begin{abstract}
  We prove a fixed point theorem for the action of certain local
  monodromy groups on \'etale covers and use it to deduce lower bounds
  in essential dimension.  In particular, we give more geometric
  proofs of many (but not all) of the results of the preprint of Farb,
  Kisin and Wolfson, which uses arithmetic methods to prove
  incompressibility results for Shimura varieties and moduli spaces of
  curves. Our method allows us to prove results for exceptional groups,
  and also for the reduction modulo good primes of Shimura varieties
  and moduli spaces of curves.
\end{abstract}

\maketitle
\tableofcontents

\section{Introduction}
\label{sect.intro}
The purpose of this paper is to prove incompressibility results and
lower bounds on essential dimension using a new method for producing
fixed points, which we formulate in Proposition~\ref{p.criterion} and
Theorem~\ref{t.compress} below.  This recovers many, but not all, of
the incompressibility results obtained using arithmetic methods in the
preprint~\cite{FKW19} of Farb, Kisin and Wolfson.  For example, we get
geometric proofs of the incompressibility of congruence covers related to the
moduli space $\mathcal{A}_g$ of principally polarized abelian
varieties (Corollary~\ref{cor.main}) and the moduli space
$\mathcal{M}_g$ of smooth genus $g$ curves (Theorem~\ref{t.incompmg}).
Our method also allows us to prove new results including an extension
of the theorems of~\cite{FKW19} on incompressibility of congruence
covers of locally symmetric varieties associated to certain groups of
type $E_7$. In contrast to the method of~\cite{FKW19}, our approach
also allows us to prove incompressibility of congruence covers of
both $\mathcal{A}_g$ and $\mathcal{M}_g$ over fields of positive
characteristic.

\subsection{Essential dimension}\label{ss.ed}
Essential dimension is a numerical measure of the complexity of
algebraic objects which first appeared in a paper by J.~Buhler and
Z.~Reichstein~\cite{bur} from 1997.  Since then, many different, but
equivalent, ways of looking at it have arisen.  To explain our results
in more detail and to fix terminology, we give a quick review of
the concepts of incompressibility and essential dimension here
in the introduction. However, for more
details, we refer the reader to \S\ref{s.method} and to~\cite{bur, FKW19, merkurjev-survey}.

Let $k$ be any field. By a variety over $k$ we shall mean a reduced,
separated scheme of finite type. If $f:X\to Y$ is a generically \'etale
morphism of varieties with $Y$ integral, then the \emph{essential
  dimension} of $f$, $\ed f$, is defined to be the minimum of the
dimensions of irreducible varieties $Y'$ such that the following
conditions hold:
\begin{enumerate}
\item There is a dominant rational map $Y\dashrightarrow Y'$ and a
  finite \'etale morphism of schemes $X'\to Y'$.
\item Over the generic point of $Y$, the morphism $X\to Y$ is the
  pullback of the morphism $X'\to Y'$.
\end{enumerate}
We will often abuse notation slightly and write $\ed(X \to Y)$, or
even $\ed X$, instead of $\ed f$. For any prime number $p$, we
define $\ed(f;p)$ (or $\ed(X;p)$), the \emph{essential dimension at
  $p$} of $f$, to be the minimum of $\ed g$, where $g$ ranges over
all morphisms of the form $g: X \times_Y Z \to Z$, with $Z$ integral
and $Z \to Y$ a generically finite dominant morphism of degree prime
to $p$ such that $X\times_Y Z$ is reduced. (The above definitions can be extended to allow reducible $Y$
by setting $\ed f = \max_i \ed(X \times_{Y_i} Y \to Y_i)$, where the
$Y_i$ run over the irreducible components of $Y$, and similarly for
$\ed(f;p)$).

If $G$ is a finite group, $Y$ is an integral variety and $f:X \to Y$
is a $G$-torsor, then $f$ is finite \'etale and the above definitions
apply to $X$. A $G$-torsor is said to be \emph{incompressible} if
$\ed X = \dim X$ and \emph{$p$-incompressible} if
$\ed(X;p) = \dim X$. The essential dimension $\ed G$ of $G$ is
defined to be the maximum of $\ed X$ over all $G$-torsors $X$ as
above and we define $\ed(G;p)$ analogously.

\subsection{Incompressibility results}\label{ss.ir} 
The recent paper of Farb, Kisin and Wolfson~\cite{FKW19} mentioned above
proves the incompressibility of a large class of congruence covers of
Shimura varieties.  For example, let $\mathcal{A}_{g,N}$
denote the moduli space of principally polarized abelian varieties of
dimension $g$ with (symplectic) level $N$-structure~\cite[Definition
I.4.4, p.~19]{FaltingsChai}.  If $N\geq 3$ and if $p$ is a prime
number not dividing $N$, then the natural morphisms
$\mathcal{A}_{g,pN}\to\mathcal{A}_{g,N}$ is an
$\Sp_{2g}(\mathbb{F}_p)$-torsor.
One of the key theorems
of~\cite{FKW19} is the $p$-incompressibility of $\mathcal{A}_{g,pN}$
or, equivalently, of the cover
$\mathcal{A}_{g,pN}\to\mathcal{A}_{g,N}$.

The results of~\cite{FKW19} were proved using arithmetic methods
involving reduction modulo $p$ and a subtle argument involving
Serre--Tate coordinates.  Their results go far beyond the case of
$\mathcal{A}_{g,pN}$, proving incompressibility for congruence covers
of many interesting varieties related to $\mathcal{A}_g$ such as the
moduli space of curves $\mathcal{M}_g$ and a large class of Hodge type
Shimura varieties (and even certain subvarieties thereof).

Following a suggestion of Z.~Reichstein, our initial goal in this
paper was to recover the $p$-incompressibility of $\mathcal{A}_{g,pN}$
using an elementary (and, by now, standard) geometric criterion for
incompressibility, which we refer to as {\em the fixed point
  method}. As we explain below, the fixed point method allows us not
only to recover some of the results of~\cite{FKW19} for
$\mathcal{A}_g$ but also to prove several new results.

We will give a stronger statement (Proposition~\ref{prop.fixed-point})
of the fixed point method in \S\ref{s.method}.  However, for the
purposes of this introduction it boils down to the following fact:
Suppose $G$ is a finite group, $X$ is a $G$-torsor over an irreducible
variety $Y$, and $\ovl{X}$ is a $G$-equivariant compactification of $X$.
If a finite abelian $p$-subgroup $H \leq G$ of rank $\dim X$ has a
smooth fixed point on $\ovl{X}$, then $X$ is $p$-incompressible.

In~\S\ref{s.crit}, we prove a criterion,
Proposition~\ref{p.criterion}, for the existence of fixed points in a
compactification of a variety $X$, which is presented as a finite
\'etale Galois cover $f:X\to Y$ with Galois group $G$.  This
criterion, which depends on a partial compactification $\ovl Y$ of $Y$
and the local monodromy of the cover $f$ on a partial toroidal
resolution $S$ of $\ovl Y$, leads to Theorem~\ref{t.compress}, which
combines Proposition~\ref{p.criterion} with the fixed point method to
give lower bounds on essential dimension based on local monodromy.
Applying Theorem~\ref{t.compress} to various situations, e.g., where
$Y$ is the moduli space $\mathcal{M}_g[N]$ of genus $g$ curves with
(symplectic) level $N$ structure or where $Y$ is $\mathcal{A}_{g,N}$
or, more generally, a locally symmetric variety, allows us to prove
lower bounds on essential dimension and, in many cases, even
incompressibility results.

\subsection{Contents of the paper} We now explain the
incompressibility results in more detail along with the topics of the
sections in the paper.  Section~\ref{s.method} briefly reviews
essential dimension and the notion of versal and $p$-versal
torsors. It also proves the version of the fixed point method,
Proposition~\ref{prop.fixed-point}, mentioned above.  Most of this
section is due to Z.~Reichstein, and we are grateful to him for
allowing us to use it. 

Section~\ref{s.crit} proves our main general result on essential
dimension and local monodromy, Theorem~\ref{t.compress}.  The rest of
the paper consists essentially of examples.  Section~\ref{s.tqft}
proves the incompressibility of two types of covers related to the
moduli space $\mc{M}_g[N]$ of genus $g$ curves with (symplectic) level $N$
structure.  The first result, Theorem~\ref{t.incompmg}, recovers the
$p$-incompressibility of the cover $\mc{M}_g[pN]\to \mc{M}_g[N]$, which was
proved by arithmetic means in~\cite[Theorem 3.3.2]{FKW19}. Our proof
is characteristic free and works over all fields of characteristic not
dividing $pN$.  The second result, Theorem~\ref{t.quantum}, proves the
incompressibility of certain ``quantum covers'' of $\mc{M}_g[N]$ arising
from the TQFTs constructed in \cite{bhmv}.

Section~\ref{s.sv} proves our main general results on the essential
dimension of congruence covers of locally symmetric varieties.  We
begin by recalling the notion of a locally symmetric variety in
\S~\ref{lss} essentially following the terminology of Ash, Mumford,
Rapoport and Tai~\cite{amrt}.  These are quotients
$\Gamma \backslash D$ of Hermitian symmetric domains by arithmetic
subgroups $\Gamma$.  (See Remark~\ref{r.cpt} for an explanation of why
we prefer to use the language of locally symmetric varieties rather
than the closely related language of Shimura varieties in the context
of this paper.)

The main theorems of Section~\ref{s.sv} are Theorem~\ref{t.main-pure},
which proves a general lower bound on the essential dimension of
congruence covers of locally symmetric varieties in terms of boundary
components, and Theorem~\ref{thm.main2}, which deals with the special
case where $D$ is a tube domain with a zero dimensional rational
boundary component.  In this special tube domain case, our results
often imply $p$-incompressibility.  Both theorems are proved by
applying our main theorem on essential dimension and local
monodromy, Theorem~\ref{t.compress}, to the case where $X\to Y$ is a
congruence cover of locally symmetric varieties and $\overline{Y}$ is
the Baily--Borel compactification of $Y$.  In Corollary~\ref{cor.main}
of \S\ref{ss.ag}, we apply Theorem~\ref{thm.main2} to recover the
result from~\cite{FKW19} on the incompressibility of the congruence
cover $\mathcal{A}_{g,pN}\to\mathcal{A}_{g,N}$. We also extend this
result to fields of positive characteristic in \S\ref{pos} using the
integral toroidal compactifications of Faltings and Chai
\cite{FaltingsChai}.

Section~\ref{s.Ex} considers the problem of constructing
incompressible congruence covers of locally symmetric varieties
analogous to the congruence covers of modular curves with full level
structure.  These are the covers that are called \emph{principal
  $p$-coverings} in~\cite{FKW19}, and much of what we do in
Section~\ref{s.Ex} is motivated by similar considerations
in~\cite{FKW19}.  However, owing to our methods, which depend on
boundary components, we have to deal with issues not present
in~\cite{FKW19}.  Our main result is Theorem~\ref{t.nont}.  As an
application, we can produce $p$-incompressible congruence covers of
certain locally symmetric varieties of type $E_7$ and with Galois
group $\mathbb{G}(\mathbb{F}_{p^n})$, where $\mathbb{G}$ is the simply
connected form of $E_7$ over $\mathbb{F}_p$.  We also recover most of
the results of~\cite{FKW19} on existence of $p$-incompressible
principal $p$-covers for classical groups.  The notable exception is
when the group is of type $A_n$ for $n$ even.  See Remark~\ref{list}.

There are two appendices in which we reproduce (with permission)
results shown to us by other people. Appendix~\ref{vhs} gives an
argument by M.~Nori, which proves a weak version of
Conjecture~\ref{c.vhs} below on essential dimension of variations of
Hodge structure.  Appendix~\ref{a.benson} gives a proof, due to Dave
Benson, that the essential dimension at $p$ of
$\Sp_{2g}(\mathbb{F}_p)$ is $p^{g-1}$.  This shows that there is
a large (in fact, exponential) difference between the essential
dimension of congruence covers
$\mathcal{A}_{g,pN}\to\mathcal{A}_{g,N}$, which are very particular
$\Sp_{2g}(\mathbb{F}_p)$-torsors, and the essential dimension of
the versal torsor, which is $p^{g-1}$.  (See also
Corollary~\ref{cor.versal} for a general lower bound on essential
dimension of groups with non-abelian $p$-Sylow subgroups.)  

Jesse Wolfson has informed us that his student, Hannah Knight, has also, independently, 
computed $\ed(\Sp_{2g}(\mb{F}_p);p)$.  Moreover, she has made 
significant progress towards the goal of computing $\ed(G;p)$ for much more general finite
quasisimple groups of Lie type.  In particular, she has computed it for groups 
 $G=\Sp_{2g}(\mathbb{F}_{p^r})$  for $r>1$ as well as for analogous orthogonal groups.  
 
\subsection{Comparison with work of Farb, Kisin and Wolfson}

The methods of this paper and of~\cite{FKW19}, although very
different, have a key point in common, i.e., the use of elementary
abelian $p$-groups contained in the Galois group of the congruence
covers.  We use these subgroups in a very direct way via the fixed
point method, whereas their appearance in~\cite{FKW19} is slightly
indirect, as wild ramification groups of degenerations of congruence
covers.  These different ways of exploiting such subgroups accounts,
to a large extent, for the fact that although the results of both
papers have a large intersection, there are results provable by each
method not accessible to the other.

For example, the methods of~\cite{FKW19} apply to many congruence
covers of Shimura varieties whose connected components are compact.
As these congruence covers are \'etale (at least when the congruence
subgroups involved are neat, see \S\ref{s.AC}), the fixed point method
cannot possibly apply (as there are no fixed points).  On the other
hand, the methods of~\cite{FKW19} rely on embedding Shimura varieties
in $\mathcal{A}_g$ and the fact that $\mc{A}_g$ has a good integral
model over which the congruence covers degenerate (at primes dividing
the level). Therefore, they apply essentially only to Hodge type
Shimura varieties, and, for example, not to those of type $E_7$ where
our methods yield new incompressibility results. Furthermore, they do
not apply to $\mc{A}_g$ and $\mc{M}_g$ over fields of positive
characteristic, since the congruence covers over such a field cannot
have unequal characteristic degenerations.

It would be interesting to prove $p$-incompressibility for locally
symmetric varieties associated to groups of type $E_6$.  Since such
varieties are not of Hodge type, the methods of~\cite{FKW19} do not
apply, but, since they are not quotients of tube domains, our methods
also do not suffice to prove incompressibility (Remark \ref{list}(3)).

\subsection{A conjecture}\label{ss.conj}

Recall from \cite[\S 1]{DeligneShimuraCorv} that Hermitian symmetric
domains are special examples of the period domains studied in Hodge
theory.  Given any integral variation of Hodge structure $\mathbb{H}$
on a smooth variety $B$, we get an associated period map
$\varphi:B^{\mathrm{an}}\to\Gamma\backslash D$ where $D$ is a period
domain, or, more generally, a Mumford--Tate domain, with generic
Mumford--Tate group $\mathbf{G}$, an algebraic group over
$\mathbb{Q}$, and $\Gamma\leq\mathbf{G}(\mathbb{Q})$ is an arithmetic
lattice depending on the monodromy.  See the paper~\cite{BBT} of
Bakker, Brunebarbe and Tsimerman for this notation and for their main
theorem, which states that the image $Y$ of $\varphi$ has the
structure of a quasiprojective complex variety. We call $Y$ the
\emph{image of the period map}.

Combining the results obtained by our methods with those
of~\cite{FKW19} along with some wishful thinking leads us to guess
that the dimension of the image of the period map should bound the
essential dimension of congruence covers from below.  To make this
explicit, suppose $\mathbb{H}$ is a torsion-free variation of Hodge
structure on a smooth complex variety $B$.  Write
$\mathbb{H}_{\mathbb{Z}}$ for the local system on $B$ corresponding to
$\mathbb{H}$.  Then, for each prime $p$, we get a family of local
systems $\mathbb{H}_{\mathbb{Z}}/p^n$ of free $\mathbb{Z}/p^n$-modules
over $B$.  Moreover, the \'etal\'e spaces of the sheaves
$\mathbb{H}_{\mathbb{Z}}/p^n$ are algebraic.  So it makes sense to
formulate:

\begin{conjecture}\label{c.vhs}
  Suppose $\mathbb{H}$ is a torsion-free integral variation of Hodge
  structure on a smooth irreducible complex variety $B$.  Let $Y$ be
  the image of the period map and set $d=\dim Y$.  There exists an
  integer $N$ such that, if $p$ is a prime number and $n$ is a
  nonnegative integer with $p^n\geq N$, then
  $\ed(\mathbb{H}\otimes_{\mathbb{Z}}\mathbb{Z}/p^n\to B; p)\geq d$.
\end{conjecture}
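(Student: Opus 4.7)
The plan is to apply Theorem~\ref{t.compress} to a suitable compactification of $B$, using quasi-unipotent local monodromy of $\mathbb{H}$ at infinity in place of the unipotent cusp monodromy that drove the proofs of Theorems~\ref{t.main-pure} and~\ref{thm.main2}. Note first that $\mathbb{H}/p^n\to B$ is the pullback along $\varphi$ of an analogous cover on $Y$, so the inequality $\ed(\mathbb{H}/p^n\to B;p)\leq d$ is immediate and the content of the conjecture is the matching lower bound. To attack that, I would pass to the Galois closure of $\mathbb{H}/p^n\to B$ (a torsor for the mod $p^n$ monodromy group) and, using~\cite{BBT}, Hironaka, and the extension theorems of Griffiths--Schmid and Kato--Usui, choose a smooth projective compactification $\widetilde{B}$ with simple normal crossings boundary $\sum D_i$ such that $\varphi$ extends to $\widetilde{\varphi}:\widetilde{B}\to\overline{Y}$ for some projective compactification $\overline{Y}$ of $Y$. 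By the several-variable nilpotent orbit theorem of Cattani--Kaplan--Schmid, at any point where boundary divisors $D_{i_1},\ldots,D_{i_k}$ meet, the local monodromies $T_{i_j}$ commute, are quasi-unipotent, and their logarithms $N_{i_j}$ span a nilpotent cone governing a limiting mixed Hodge structure.

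The heart of the argument is to locate a boundary point $b\in\widetilde{B}\setminus B$ at which $k\geq d$ divisors meet \emph{and} the cone $\sum\mathbb{R}_{\geq 0}N_{i_j}$ has rank $d$ matching $\dim Y$. Granting such $b$, for $p^n$ sufficiently large, depending on the nilpotency indices of the $N_{i_j}$ and on the finite orders of the semisimple parts of the $T_{i_j}$, suitable powers of the $T_{i_j}$ reduce modulo $p^n$ to commuting elements of exact order $p$, generating an elementary abelian subgroup $H\cong(\mathbb{Z}/p)^d$ of the monodromy image in $\GL(\mathbb{H}_b/p^n)$. A local toric model near $b$ then exhibits a smooth $H$-fixed point on a compactification of the Galois closure at which $H$ acts by $d$ independent nontrivial characters on the transverse disc coordinates; Theorem~\ref{t.compress} (fed by Proposition~\ref{p.criterion} and the fixed point method of~\S\ref{s.method}) then yields $\ed(\mathbb{H}/p^n\to B;p)\geq d$, with the threshold $N$ determined by the $N_{i_j}$ and hence independent of $p$ and $n$.

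The main obstacle is producing such a point $b$. For Hermitian symmetric $D$ this is supplied by the theory of rational boundary components and the Baily--Borel construction; for $\mathcal{A}_g$ and $\mathcal{M}_g$ it is realized by totally degenerate abelian varieties or maximally nodal curves. For a general period map no analogous theorem appears to be available: one needs to know that the image of the period map admits a compactification with a stratum of codimension $d$ along which the nilpotent orbit cone of the limiting mixed Hodge structure attains full rank. This is a statement about the geometry of arbitrary degenerations of variations of Hodge structure that lies beyond the current state of the theory, which is presumably why only the weaker form of Appendix~\ref{vhs} has so far been proved.
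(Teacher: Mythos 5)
The statement you were asked to prove is labelled as a conjecture, and the paper does not prove it; the authors explicitly describe it as arising from ``wishful thinking'' and offer only partial evidence. There is therefore no proof in the paper against which to measure your argument, and to your credit you recognize this: your write-up is an attack strategy that terminates in an honest admission of the missing ingredient. Your diagnosis of that obstruction is essentially correct and is worth sharpening slightly. What Theorem~\ref{t.compress} requires is a simplicial toroidal germ $S$ mapping into a partial compactification of $B$, such that the composite $\pi_1(S^o)\to G$ (the mod $p^n$ monodromy group) has image an elementary abelian $p$-group of rank $d$; equivalently, $d$ commuting local monodromies at a boundary point of a toroidal compactification that remain independent modulo $p$. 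This is exactly what a $0$-dimensional rational boundary component of a tube domain supplies ($\dim U(F)=\dim D$), and what a totally degenerate stable curve supplies for $\mc{M}_g$; for a general Mumford--Tate domain no such theorem exists. In fact the gap is slightly broader than you say: even in the Hermitian symmetric case, for domains that are not of tube type (e.g.\ type $E_6$, see Remark~\ref{list}(3)) the local monodromy at the deepest rational cusp has rank only $\dim U(F)<\dim D$, so the fixed-point mechanism does not recover the full bound $d$. The obstruction is therefore not confined to exotic period domains.

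It is also worth recording how the paper's own evidence (Proposition~\ref{p.vhs} in Appendix~\ref{vhs}) differs from your approach. That result bounds $\ed(\mathbb{H}_{\mathbb{Z}})$ for the local system itself rather than $\ed(\mathbb{H}/p^n;p)$ — logically incomparable with the conjecture — and the proof is entirely orthogonal to the toroidal/fixed-point machinery: if $\mathbb{H}_{\mathbb{Z}}$ is pulled back along $f:U\to B'$ then, by Deligne's and Schmid's theorem of the fixed part, the period map is constant on the connected fibres of $f$, whence $\dim B'\geq d$. No compactification, no local monodromy, no fixed points. So your proposal is the natural extension of the paper's main technique to the conjecture as stated but is blocked by an open degeneration problem, while the paper's Appendix~\ref{vhs} sidesteps that difficulty by proving a weaker statement with a softer tool.
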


We could modify the conjecture by replacing
$\ed(\mathbb{H}\otimes_{\mathbb{Z}}\mathbb{Z}/p^n\to B; p)$ with
$\ed(\mathbb{H}_{\mathbb{Z}}\to B)$ suitably defined.  In Appendix~\ref{vhs}, we
give a precise formulation and a proof, due to M.~Nori, of this
modified statement. This is our main evidence for the validity of the
above conjecture beyond the case where the period domain is Hermitian
symmetric.

\subsection{General notation and notational conventions} 
We try to maintain the convention of writing algebraic groups
$\mathbf{G}$ in boldface and abstract groups $G$, as well as Lie
groups, in non-bold.  For an algebraic group $\mathbf{G}$ over a
subring of $\mathbb{R}$, we write $\mathbf{G}(\mathbb{R})_+$ for the
connected component of the identity of the Lie group
$\mathbf{G}(\mathbb{R})$.  If $\mathbf{G}$ is defined over
$\mathbb{Q}$, then we write
$\mathbf{G}(\mathbb{Q})_+:= \mathbf{G}(\mathbb{R})_+\cap
\mathbf{G}(\mathbb{Q})$.  In \S\ref{s.sv}, $\mathbf{G}$ is usually an
adjoint group over $\mathbb{Q}$, but, in~\S\ref{s.Ex}, $\mathbf{G}$ is
usually taken to be simply connected with adjoint group
$\mathbf{G}^{\ad}$.

We warn the reader that we have reversed what seems to be the usual convention
of writing stacks such as the moduli stacks of curves or principally polarized
abelian varieties  in calligraphic script and the associated course moduli
spaces in roman font.  Fortunately, there are very few stacks in the paper
and, with regard to the above moduli stacks, we are usually taking a large 
enough level structure so that the stack and the space coincide.  Still, we 
apologize in advance if this causes confusion.  

\subsection{Acknowledgements} As we already mentioned, this work owes
its existence to a suggestion from Zinovy Reichstein. We are very
grateful to him for this suggestion and for many other smaller, but
still very significant, contributions he generously made to this paper.

Brosnan would also like to thank Michael Rapoport for several useful
discussions about Shimura varieties and toroidal compactifications and
Dave Benson for showing us how to compute the essential dimension of
the group $\Sp_{2g}(\mb{F}_p)$.  He would like to
thank Jesse Wolfson for several useful suggestions and encouraging emails as well as 
Burt Totaro for typo corrections. Moreover, he thanks the Isaac
Newton Institute for hosting the workshop where the conversations with
Benson took place in January of 2020 and the Simons Foundation for a
Collaboration Grant, which helped make it possible to travel before
the lockdown of March 2020.  Fakhruddin would also like to thank
Gregor Masbaum for useful correspondence on TQFTs, Arvind Nair for
useful conversations on Hermitian symmetric domains, and Madhav Nori
for useful discussions related to Conjecture \ref{c.vhs}. He was
supported by the DAE, Government of India, under project no.~RTI4001.

\section{Essential dimension, versality and the fixed point method}
\label{s.method}

\subsection{Essential dimension of \texorpdfstring{$G$}{G}-varieties}

Let $G$ be a finite group and $k$ be a base field. In
\S\ref{sect.intro}, we have defined the essential dimension $\ed X$
of a $G$-torsor $X \to Y$ and also its essential dimension at $p$, 
$\ed(X;p)$. We have also defined $\ed G$, the essential dimension of
$G$, and $\ed(G;p)$, the essential dimension at $p$ of $G$. In
general, these numbers depend on the field $k$, so we sometimes write
$\ed_k(G)$ and $\ed_k(G;p)$ to emphasize this.

If $X \to Y$ is an irreducible $G$-torsor, then $X$ is also an
$H$-torsor for any subgroup $H$ of $G$ (with base $H \backslash X$),
so we may also consider the essential dimension of $X$ as an
$H$-torsor. In this case, we shall write $\ed_G(X)$ and $\ed_H(X)$ if
there is any risk of confusion.

The following lemma is the analogue of \cite[Lemma 2.2]{bur} for
$G$-torsors and is very similar to~\cite[Lemma 2.1.4]{FKW19}.  
\begin{lemma} \label{l.eddef}%
  Let $X \to Y$ be a $G$-torsor (with $Y$ integral), $Y'$ an
  integral variety with a dominant rational map $\phi: Y \to Y'$, and
  $X' \to Y'$ a finite \'etale morphism such that over the generic
  point of $Y$, $X'$ is equal to $Y \times_{Y'} X'$. Then there exists
  a finite \'etale cover $Y''$ of $Y$ such that the rational map
  $Y \dasharrow Y'$ lifts to $Y''$ and there is a structure of
  $G$-torsor on $X'' = Y''\times_{Y'}X' \to Y''$ such that the induced
  rational map $X \to X''$ is $G$-equivariant.
\end{lemma}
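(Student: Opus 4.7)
The plan is to transport the $G$-torsor structure from $X$ onto a pullback of $X'$ via $\phi$, using the hypothesized generic isomorphism.

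I would first choose an open dense $U \subseteq Y$ on which $\phi$ restricts to a morphism $\phi|_U \colon U \to Y'$. Then $V := U \times_{Y'} X' \to U$ is finite \'etale, being a pullback of $X' \to Y'$. The hypothesis gives an isomorphism $X \cong Y \times_{Y'} X'$ over the generic point $\eta$ of $Y$ (interpreting the displayed statement in the natural way); this isomorphism of $\eta$-schemes spreads out, after possibly shrinking $U$ further, to an isomorphism of $U$-schemes $\alpha \colon X|_U \xrightarrow{\sim} V$.

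I would then transport the $G$-action along $\alpha$: setting $g \cdot v := \alpha(g \cdot \alpha^{-1}(v))$ makes $V \to U$ into a $G$-torsor, with $\alpha$ tautologically $G$-equivariant. Taking $Y'' := U$, the rational map $\phi$ lifts to the morphism $\phi|_U \colon Y'' \to Y'$, the pullback $X'' = Y'' \times_{Y'} X' = V$ becomes the desired $G$-torsor over $Y''$, and the induced rational map $X \dashrightarrow X''$ coincides with $\alpha$, hence is $G$-equivariant.

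The main point to watch is the interpretation of ``finite \'etale cover of $Y$'': my construction produces $Y''$ as an open dense subscheme of $Y$, which is \'etale but not finite in the strict sense. In the birational framework of essential dimension used throughout the paper this causes no difficulty, since only the generic behaviour of torsors matters. If a stricter interpretation is needed, one can post-compose with a finite \'etale cover of $U$ to absorb the ambiguity in the choice of $\alpha$ (which is determined only up to right translation by $G$, since the scheme isomorphism $X|_U \cong V$ is not canonical as a $G$-torsor isomorphism); this is a routine adjustment rather than the substance of the argument.
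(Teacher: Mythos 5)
The proposal misses the substance of the lemma, because of a misreading that the paper's typos invite. Note first that ``over the generic point of $Y$, $X'$ is equal to $Y \times_{Y'} X'$'' must read $X$, as you observed; but a second typo is more serious: ``finite \'etale cover $Y''$ of $Y$'' must read $Y'$. The whole point, and the only way the only application (Remark~\ref{rem.sbgrp}) can work, is that $Y''$ is a finite \'etale cover of $Y'$ of the \emph{same} dimension, over which the pullback of $X'$ acquires a $G$-torsor structure; this is what lets one assume, when computing $\ed$, that the compressing cover $X'\to Y'$ is itself a $G$-torsor. Your $Y'':=U$ is a dense open of $Y$; it is not finite over $Y$, it is not finite over $Y'$, and the torsor $V = U\times_{Y'}X'$ you build with the transported action is just a copy of $X|_U$, so nothing has been gained.

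The content you are missing is that the $G$-torsor structure that $X'_\eta$ acquires from the generic isomorphism with $X_\eta$ lives over $\eta=\Spec K(Y)$ and need not descend to $Y'$. A concrete obstruction: take $Y'=\Spec\mathbb{Q}$, $X'=\Spec\mathbb{Q}(\zeta_5)$, $G=(\mathbb{Z}/2)^2$, and $Y$ a $\mathbb{Q}$-variety with $\sqrt{5}\in K(Y)$ but $\zeta_5\notin K(Y)$. Then $X'_\eta$ is two copies of a quadratic extension of $K(Y)$ and is a $(\mathbb{Z}/2)^2$-torsor over $K(Y)$, yet $X'\to Y'$ is a $\mathbb{Z}/4$-cover admitting no $(\mathbb{Z}/2)^2$-torsor structure over $\mathbb{Q}$; one must first pass to $Y''=\Spec\mathbb{Q}(\sqrt{5})$. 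In general the argument is to form the $Y'$-scheme $\mathcal{T}$ parametrizing $G$-torsor structures on $X'$ over $Y'$, check that it is finite \'etale over $Y'$, observe that the transported structure on $X'_\eta$ is precisely a $K(Y)$-point of $\mathcal{T}$ lying over the given map $\eta\to Y'$, and take $Y''$ to be the irreducible component of $\mathcal{T}$ receiving that point; then $\phi$ lifts to a rational map $Y\dasharrow Y''$, and $X''=Y''\times_{Y'}X'$ carries the tautological $G$-torsor structure. Your closing remark about the non-uniqueness of $\alpha$ is beside the point: the issue is not which $\alpha$ you choose, but that the action transported along any $\alpha$ lives only over $K(Y)$ and does not descend to a finite cover of $Y'$ without an argument of the above kind.
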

We omit the proof since it is essentially the same as the proof
of~\cite[Lemma 2.1.4]{FKW19}.

\begin{remark}\label{rem.sbgrp}
  Suppose $X$ is an irreducible $G$-torsor and $H\leq G$.  Then
  $\ed_H(X)\leq \ed_G(X)$ and $\ed_H(X;p)\leq \ed_G(X;p)$ for every
  prime $p$. This follows from Lemma \ref{l.eddef} since if we have
  $Y', \phi$ as in the lemma and $X'$ is a $G$-torsor then we may set
  $Z$ to be $X/H$, $Z'$ to be $X'/H$, and then the rational map
  $ Y \dasharrow Y'$ induces a rational map $ Z \to Z'$ such that $X$
  is equal to $Z \times_{Z'}X'$ over the generic point of $Z$.

\end{remark}

\begin{remark} \label{rem.elementary} Let $k$ be a field of
  characteristic $\neq p$ containing a primitive $p$th root of unity,  
and suppose $G$ is a finite $p$-group. 
  By a theorem of Karpenko and Merkurjev, $\ed G = \ed(G; p)$ is the
  smallest dimension of a faithful linear representation of $G$
  defined over $k$; see~\cite[Theorem 4.1]{km2}.

  Of particular interest to us will be the case where
  $G = (\mathbb Z/p)^r$ is an elementary abelian group of rank
  $r$. Here $\ed G = \ed(G; p) = r$.  This special case predates the
  Karpenko--Merkurjev theorem and is considerably easier to prove;
  see,~\cite[Example 2.6]{reichstein-icm} or~\cite[Example
  3.5]{merkurjev-survey}.

  Now suppose $k$ is an arbitrary field of characteristic $\neq p$ and
 let  $k'$ be the field obtained from $k$ by adjoining a primitive $p$th
  root of unity. Since $[k': k]$ is prime to $p$, we
  have~$\ed_k(G; p) = \ed_{k'}(G; p)$, see~\cite[Remark 4.8]{km2}. In
  particular,
  $\ed \, (\mathbb Z/p)^r \geqslant \ed((\mathbb Z/p)^r; p) = r$ over
  any field $k$ of characteristic $\neq p$. \qed
\end{remark}

\subsection{Versality}
In this section it will be convenient for us to also consider
irreducible varieties $X$ with faithful $G$-actions which are not
free. Since $G$ is finite, $X$ always has a dense affine open
$G$-invariant subset $U$ on which $G$ acts freely. Then $U \to U/G$ is
a $G$-torsor and we set $\ed X := \ed U$ and $\ed(X;p) :=
\ed(U;p)$. It is easy to see that this is independent of the choice of
$U$.

We say that an irreducible $G$-variety $V$ is {\em weakly versal}
(respectively, {\em weakly $p$-versal}) if, for every $G$-torsor
$X \to Y$, with $Y$ integral, there exists a $G$-equivariant rational map
$X \dasharrow V$ (respectively a $G$-equivariant correspondence
$X \rightsquigarrow V$ of degree prime to $p$).  Here $p$ is a fixed
prime number and by a $G$-equivariant correspondence $X \rightsquigarrow V$ of degree
prime to $p$ we mean a dominant morphism $Y' \dasharrow Y$ of
degree prime to $p$ together with a $G$-equivariant rational map
$Y' \times_Y X$ to $V$. We say that $V$ is {\em versal} (respectively,
{\em $p$-versal}) if every dense open $G$-invariant subvariety
$V_0 \subset V$ is weakly versal (respectively, weakly $p$-versal).
Note that versality and $p$-versality are birational properties of
$V$, whereas weak versality and weak $p$-versality are not.

\begin{theorem}[Duncan--Reichstein]
 \label{ex.versal} Let $G$ be a finite $p$-group and 
  $X$ an irreducible $G$-variety over a base field $k$. If
  $X$ has a smooth $G$-fixed $k$-point, then $X$ is $p$-versal.
\end{theorem}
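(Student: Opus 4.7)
The plan is to construct, from the smooth $G$-fixed $k$-point $x_0 \in X$, a $G$-equivariant dominant rational map from a faithful linear $G$-representation to $X$, and then to combine this with the $p$-versality of such representations. I will assume throughout that $\mathrm{char}(k)\neq p$, so that $|G|$ is invertible in $k$ and the equivariant machinery below is available.

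The first step is formal equivariant linearization at $x_0$. Setting $V:=T_{x_0}X=(\mathfrak{m}/\mathfrak{m}^2)^*$, where $\mathfrak{m}\subseteq \mathcal{O}_{X,x_0}$ is the maximal ideal, the $G$-equivariant short exact sequences
\[
0\to \mathfrak{m}^{i+1}/\mathfrak{m}^{i+2}\to \mathfrak{m}/\mathfrak{m}^{i+2}\to \mathfrak{m}/\mathfrak{m}^{i+1}\to 0
\]
split equivariantly by averaging, and these splittings assemble inductively into a $G$-equivariant isomorphism $\widehat{\mathcal{O}}_{X,x_0}\simeq \widehat{\mathrm{Sym}}(V^*)$. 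Since the $G$-action on the irreducible $X$ is faithful, any $g\in G$ acting trivially on $\widehat{\mathcal{O}}_{X,x_0}$ must fix a closed subscheme of $X$ whose completion at $x_0$ has full dimension, hence equals $X$. Thus the induced action on $\widehat{\mathcal{O}}_{X,x_0}$, and hence on $V$ itself, is faithful.

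The second step is to pass from this formal isomorphism to an honest $G$-equivariant rational map $V\dashrightarrow X$. By Luna's \'etale slice theorem in characteristic zero, or its extensions to linearly reductive groups in characteristics coprime to $|G|$, one obtains a $G$-invariant Zariski open neighborhood $N$ of $0\in V$ and a $G$-equivariant \'etale morphism $N\to X$ sending $0$ to $x_0$; this is in particular a $G$-equivariant dominant rational map $V\dashrightarrow X$. This algebraization is the principal technical step and the main obstacle: without the slice theorem one can invoke an equivariant form of Artin approximation, though care is then needed with the \'etale degrees introduced.

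The third and final step is to conclude. It is classical that any faithful linear representation of a finite $p$-group is $p$-versal (indeed versal); see, e.g., Remark~\ref{rem.elementary} for the elementary abelian case. Hence, for any $G$-torsor $W\to B$ with $B$ integral, there exist a prime-to-$p$ dominant morphism $B'\dashrightarrow B$ and a $G$-equivariant rational map $W\times_B B'\dashrightarrow V$. Composing with $V\dashrightarrow X$ yields a $G$-equivariant correspondence $W\rightsquigarrow X$ of degree prime to $p$. To promote this to $p$-versality rather than merely weak $p$-versality, given any dense $G$-invariant open $V_0\subseteq X$, one replaces $N$ by the preimage of $V_0$ under $N\to X$ (still a nonempty $G$-invariant open of $V$) and shrinks $B'$ accordingly, so that the composed correspondence factors through $V_0$.
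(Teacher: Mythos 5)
The argument breaks down at Step 2, and the difficulty is structural rather than technical. Luna's slice theorem (and its finite-group variants in characteristic prime to $|G|$) produces a $G$-equivariant \'etale morphism \emph{from} a $G$-invariant open $S\subseteq X$ containing $x_0$ \emph{to} the tangent representation $V=T_{x_0}X$; it does \emph{not} produce a $G$-equivariant \'etale or even dominant map $N\to X$ with $N\subseteq V$ open, which is what you assert and what your Step 3 requires. Such a map cannot exist in general: since $V$ is rational and $\dim V=\dim X$, a dominant rational map $V\dashrightarrow X$ would force $X$ to be unirational, and a smooth $G$-fixed $k$-point places no such constraint on $X$. Concretely, suppose $\zeta_p\in k$, take any smooth non-unirational $k$-variety $Y$ with a $k$-point $y_0$, let $G=(\mathbb{Z}/p)^r$ act trivially on $Y$ and faithfully and linearly on $\mathbb{A}^r$, and set $X=Y\times\mathbb{A}^r$ with $x_0=(y_0,0)$. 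Then $X$ is an irreducible $G$-variety with a smooth $G$-fixed $k$-point, yet admits no dominant rational map from a linear representation. Thus the whole strategy of factoring $W\rightsquigarrow X$ through a $G$-equivariant dominant $V\dashrightarrow X$ is not available. (Running the slice map $S\to V$ in the correct direction would attach an \'etale correspondence $X\rightsquigarrow V$ of some degree $d$, but $d$ is uncontrolled modulo $p$, so that route does not rescue the argument either.)

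The paper itself does not prove the statement but simply cites \cite[Corollary 8.6]{duncan-reichstein}, whose proof follows a different route. Duncan and Reichstein characterize ($p$-)versality of a $G$-variety $X$ in terms of rational points on twisted forms: roughly, $X$ is $p$-versal if and only if for every field $K/k$, every $G$-torsor $T$ over $K$, and every dense $G$-invariant open $U\subseteq X$, the twist ${}^T U$ has a point over a finite extension of $K$ of degree prime to $p$. A smooth $G$-fixed $k$-point $x_0$ immediately gives a smooth $K$-point of every twist ${}^T X$, and a local argument using smoothness (and the special structure of prime-to-$p$ closed fields) then produces the required points of ${}^T U$ even when $x_0\notin U$. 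Your Step 1 (equivariant formal linearization and faithfulness of $G\curvearrowright T_{x_0}X$) is correct and is in the same spirit as the local analysis near $x_0$ that appears in their argument, but it must be used to produce points on twisted forms rather than a dominant map out of $V$.
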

\begin{proof}
   See~\cite[Corollary 8.6]{duncan-reichstein}. \qed
\end{proof}

\begin{lemma} \label{lem.versal} Let $G$ be a finite group, $X$ an
  irreducible $G$-variety and $p$ a prime integer.

\smallskip
(a) If $X$ is versal, then $\ed X = \ed G$.

\smallskip
(b) If $X$ is $p$-versal, then $\ed(X; p) = \ed(G; p)$.
\end{lemma}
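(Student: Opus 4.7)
The plan is to prove both parts by a single strategy: use the dense free $G$-invariant open $U \subset X$, which is a $G$-torsor $U \to U/G$ with $\ed U = \ed X$ and $\ed(U;p) = \ed(X;p)$ by definition, and route an arbitrary $G$-torsor $X' \to Y'$ through $U$ via (weak) versality, so that any compression of $U \to U/G$ pulls back to a compression of $X' \to Y'$. One direction in each part is essentially tautological: since $U$ itself is a $G$-torsor, the definitions of $\ed G$ and $\ed(G;p)$ as maxima over $G$-torsors give $\ed X \leq \ed G$ and $\ed(X;p) \leq \ed(G;p)$.

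For the substantive direction of part (a), let $X' \to Y'$ be an arbitrary $G$-torsor with $Y'$ integral. Versality of $X$ applied to the $G$-invariant open $U$ produces a $G$-equivariant rational map $X' \dashrightarrow U$, which descends to a rational map $Y' \dashrightarrow U/G$. A $G$-equivariant rational map between $G$-torsors over a common base is a generic isomorphism, so on a dense open of $Y'$ one has $X' \cong Y' \times_{U/G} U$. Now choose a compression realizing $\ed U$: a dominant rational map $U/G \dashrightarrow T$ with $\dim T = \ed U$ together with a finite \'etale cover $T' \to T$ whose pullback to $U/G$ agrees with $U \to U/G$ generically. Composing gives $Y' \dashrightarrow T$, and generically
\[
X' \;\cong\; Y' \times_{U/G} U \;\cong\; Y' \times_{U/G} \bigl((U/G) \times_T T'\bigr) \;\cong\; Y' \times_T T',
\]
so $\ed(X' \to Y') \leq \dim T = \ed X$. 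Since $X'$ was arbitrary, this gives $\ed G \leq \ed X$.

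Part (b) follows the same template with prime-to-$p$ base changes inserted at two stages. Given a $G$-torsor $X' \to Y'$, $p$-versality produces a generically finite dominant $Y'' \to Y'$ of degree prime to $p$ and a $G$-equivariant rational map $Y'' \times_{Y'} X' \dashrightarrow U$, identifying $Y'' \times_{Y'} X'$ generically with $Y'' \times_{U/G} U$. Pick a prime-to-$p$ generically finite cover $Z \to U/G$ realizing $\ed(U;p)$, together with a compression $Z \dashrightarrow T$ of $Z \times_{U/G} U$ with $\dim T = \ed(U;p)$. Since the generic fibre of $Y'' \times_{U/G} Z \to Y''$ has total degree prime to $p$, at least one irreducible component $W$ dominates $Y''$ with prime-to-$p$ degree; then $W \to Y'$ is generically finite of degree prime to $p$, and generically $W \times_{Y'} X' \cong W \times_T T'$, which yields $\ed(X';p) \leq \dim T = \ed(X;p)$. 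Taking the maximum over $X'$ gives $\ed(G;p) \leq \ed(X;p)$.

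The main technical point, rather than a deep obstacle, is managing the repeated identifications of $G$-torsors as fibre products after base change: one must ensure that the $G$-equivariant rational maps are defined on $G$-stable dense opens, that the relevant fibre products are reduced where the definitions require it, and that the chosen irreducible component $W$ really does dominate the base with prime-to-$p$ degree. These are routine bookkeeping steps made available by Lemma \ref{l.eddef}; the conceptual content of the lemma lies entirely in the reduction, via versality, to a single compression of $U \to U/G$.
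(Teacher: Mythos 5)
Your overall strategy is the paper's, but there is an ordering problem that leaves a genuine gap, and it sits exactly at the point where the hypothesis of \emph{versality} (as opposed to mere weak versality of a fixed open) is doing the work.

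In part (a), you first invoke weak versality of $U$ to obtain a $G$-equivariant rational map $X' \dashrightarrow U$, descending to $\beta\colon Y' \dashrightarrow U/G$, and only \emph{afterwards} choose a compression $\alpha\colon U/G \dashrightarrow T$ together with an \'etale cover $T' \to T$. You then propose to ``compose'' and get $Y' \dashrightarrow T$. But $\alpha$ is only a rational map, defined on some dense open $V \subset U/G$, and $\beta$ need not be dominant --- for a torsor $X' \to Y'$ with $\dim Y' < \dim U/G$, the image of $\beta$ can be a low-dimensional subvariety that misses $V$ entirely. In that case $\alpha \circ \beta$ is not a rational map and the argument stops. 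Your closing paragraph flags domain-of-definition issues for the $G$-equivariant maps as ``routine bookkeeping,'' but the composition $\alpha\circ\beta$ is the one place where that concern is not routine and cannot be absorbed by a dense open shrinkage of $Y'$.

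The paper's proof avoids this by reversing the order of choices: first fix a compression of $U \to U/G$, and shrink $U$ so that $\alpha\colon U/G \to Z$ becomes an honest morphism with $U = (U/G)\times_Z W$. Only then does it apply weak versality --- not of the original $U$ but of the shrunken one, which is still a dense $G$-invariant open of $X$ and hence still weakly versal precisely because $X$ is assumed versal rather than merely weakly versal. The resulting $\beta$ then automatically lands in the domain of $\alpha$, and the composition is unproblematic. The same reordering is needed in your part (b): fix the prime-to-$p$ cover $Z \to U/G$ and compression realizing $\ed(U;p)$ first, shrink $U$ accordingly, then invoke weak $p$-versality of the shrunken open. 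Your remaining steps in (b) --- in particular selecting an irreducible component $W$ of $Y'' \times_{U/G} Z$ whose degree over $Y''$ is prime to $p$ --- are correct and match what the paper sketches when it says ``the same argument, with rational maps replaced by correspondences of degree prime to $p$.''
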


\begin{proof} (a) We need to show that $\ed X' \leqslant \ed X$ for
  every $G$-torsor $X' \to Y'$. Let $U$ be a $G$-invariant affine open
  subset of $X$ on which $G$ acts freely. By shrinking $U$ if
  necessary, we may find a morphism $\alpha: U/G \to Z$ and a
  $G$-torsor $W \to Z$ such that $U = (U/G) \times_Z W$.  Since $U$ is
  a $G$-invariant open subset of $X$, $U$ is weakly versal.  Thus
  there exists a $G$-equivariant rational map $ X' \dasharrow U$,
  equivalently a map $\beta: Y' \to U/G$ inducing an isomorphism
  $X' = Y' \times_{U/G} U$ . Composing $\beta$ and $\alpha$ we obtain
  a rational map $\gamma \colon Y' \dasharrow Z$ such that $X'$ is
  (generically) equal to $Y' \times_Z W$. We conclude that
  $\ed Y \leq \dim Z = \ed X$, as claimed.

Part (b) is proved by the same argument, with rational maps replaced by correspondences of degree prime to $p$.
\end{proof}

\begin{corollary} \label{cor.versal} Let $G$ be a finite group, $p$ a prime number, and $G_p$  a Sylow $p$-subgroup of $G$.
If $G_p$ is non-abelian, and $X$ is a $p$-versal $G$-variety, then $\dim X \geqslant p$.
\end{corollary}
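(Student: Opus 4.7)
The plan is to prove the bound via the chain
\[
  \dim X \;\geq\; \ed_G(X;p) \;=\; \ed(G;p) \;\geq\; \ed(G_p;p) \;\geq\; p.
\]
The argument is implicitly over a field of characteristic $\neq p$, as the last step requires. The first inequality is a trivial upper bound: choosing a dense $G$-stable open $U \subseteq X$ on which $G$ acts freely, the identity compression of $U \to U/G$ gives $\ed U \leq \dim U = \dim X$, so $\ed(X;p) \leq \ed(X) \leq \dim X$. The equality $\ed_G(X;p) = \ed(G;p)$ is Lemma~\ref{lem.versal}(b) applied to the $p$-versal $G$-variety $X$.

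For the middle inequality, I would show that $X$ remains $p$-versal when regarded as a $G_p$-variety by restriction; combined with Lemma~\ref{lem.versal}(b) and Remark~\ref{rem.sbgrp} this gives $\ed(G_p;p) = \ed_{G_p}(X;p) \leq \ed_G(X;p) = \ed(G;p)$. To establish the restricted $p$-versality, let $X_0' \subseteq X$ be a dense $G_p$-stable open and set $X_0 := \bigcap_{gG_p \in G/G_p} g \cdot X_0'$, a finite intersection of dense opens that is $G$-invariant and contained in $X_0'$. For any $G_p$-torsor $T' \to Y'$, induce to the $G$-torsor $T := G \times^{G_p} T' \to Y'$; the map $t' \mapsto [e,t']$ realizes $T'$ as a $G_p$-stable subvariety of $T$. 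Applying weak $p$-versality of $X_0$ as a $G$-variety yields a prime-to-$p$ cover $Y'' \to Y'$ and a $G$-equivariant rational map $Y''\times_{Y'} T \dashrightarrow X_0$; because the indeterminacy locus is $G$-stable, hence the pullback of a closed subset of $Y''$, the restriction to the surjective-over-$Y''$ subvariety $Y''\times_{Y'} T' \subseteq Y''\times_{Y'} T$ remains a well-defined rational map, producing a $G_p$-equivariant correspondence $T' \rightsquigarrow X_0 \subseteq X_0'$ of degree prime to $p$.

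For the last inequality, pass to an extension $k'$ of $k$ obtained by adjoining a primitive $p$-th root of unity; since $[k':k]$ is prime to $p$, Remark~\ref{rem.elementary} gives $\ed_{k}(G_p;p) = \ed_{k'}(G_p;p)$, and by Karpenko--Merkurjev this equals the minimum dimension of a faithful linear representation of $G_p$ over $k'$. Any representation of $G_p$ of dimension strictly less than $p$ decomposes into irreducibles of dimension strictly less than $p$; since the irreducible representations of a $p$-group have $p$-power dimension, each summand is one-dimensional, so the representation factors through the abelianization $G_p^{\mathrm{ab}}$. Because $G_p$ is non-abelian, such a representation cannot be faithful, so the minimum is at least $p$.

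The main obstacle is the descent of $p$-versality from $G$ to $G_p$ in the middle paragraph. This requires both the trick of intersecting $G$-translates of a $G_p$-stable open to extract a $G$-stable open inside it, and the induced-torsor construction $G\times^{G_p}T'$ together with its $G_p$-equivariant embedding of $T'$. Once the descent is in hand, the remaining two steps are a direct appeal to Lemma~\ref{lem.versal}(b) and a standard representation-theoretic bound on faithful representations of non-abelian $p$-groups.
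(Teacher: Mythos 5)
Your proof is correct and follows essentially the same chain as the paper ($\dim X \geq \ed_G(X) \geq \ed_{G_p}(X;p) = \ed(G_p;p) \geq p$ versus your reordered $\dim X \geq \ed_G(X;p) = \ed(G;p) \geq \ed(G_p;p) \geq p$; both pivot through $\ed_{G_p}(X;p) = \ed(G_p;p)$). What you add beyond the paper: you explicitly prove that a $p$-versal $G$-variety remains $p$-versal as a $G_p$-variety, by intersecting $G$-translates of a $G_p$-stable open and restricting a $G$-equivariant correspondence along $T' \hookrightarrow G \times^{G_p} T'$; the paper uses this fact tacitly when it applies Lemma~\ref{lem.versal}(b) with $G_p$ in place of $G$ at its step (iii), so your argument is a genuine clarification, and the observation that a $G$-stable closed subset of a $G$-torsor is pulled back from the base is exactly what is needed to see that the restricted rational map is still defined. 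You also replace the paper's citation of~\cite[Theorem 1.3]{meyer-reichstein-documenta} for $\ed(G_p;p)\geq p$ by a direct Karpenko--Merkurjev argument. The latter has a small imprecision: over a field $k'$ containing only a primitive $p$-th root of unity, the $k'$-irreducible representations of $G_p$ need not be $p$-power-dimensional. One should first extend scalars to an algebraic closure $\overline{k'}$, where the irreducible constituents of a representation of dimension less than $p$ are all one-dimensional, so the image of $G_p$ acting on $V\otimes_{k'}\overline{k'}$ is simultaneously diagonalizable, hence abelian, contradicting faithfulness since $G_p$ is non-abelian. The conclusion is right and the fix is routine.
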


\begin{proof} We have $\dim X  \stackrel{(i)}{\geqslant} \ed_G(X) 
\stackrel{(ii)}{\geqslant} \ed_{G_p}(X; p) \stackrel{(iii)}{=} \ed(G_p; p) \stackrel{iv)}{\geqslant} p$. 

Here (i) follows from the definition of essential dimension, (ii) from
Remark~\ref{rem.sbgrp}, (iii) from Lemma~\ref{lem.versal}(b), and (iv)
from \cite[Theorem 1.3]{meyer-reichstein-documenta}.
\end{proof}

Since $\dim\mathcal{A}_g=\dfrac{g(g+1)}{2}$, Corollary~\ref{cor.versal} allows us to see easily that the congruence
cover $\mathcal{A}_{g,pN}\to \mathcal{A}_{g,N}$ of the moduli space of principally polarized abelian varieties from the introduction is (usually) not $p$-versal.

\begin{corollary} \label{cor.non-versal} Fix $g \geqslant 2$. Then the
  $\Sp_{2g}(\mathbb{F}_p)$-torsor $\mathcal{A}_{pN}\to \mathcal{A}_N$
  from \S\ref{ss.ir} is not $p$-versal for any prime
  $p > \dfrac{g(g+1)}{2}$.
\end{corollary}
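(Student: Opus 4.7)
The plan is to reduce the corollary immediately to Corollary \ref{cor.versal}. I need two ingredients: (1) the dimension of $\mathcal{A}_{g,pN}$ is strictly less than $p$, and (2) a Sylow $p$-subgroup of $\Sp_{2g}(\mathbb{F}_p)$ is non-abelian when $g\geq 2$.

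For (1), since the map $\mathcal{A}_{g,pN}\to\mathcal{A}_{g,N}$ is a finite \'etale $\Sp_{2g}(\mathbb{F}_p)$-torsor, the covering space has the same dimension as the base, so
\[
\dim \mathcal{A}_{g,pN} \;=\; \dim \mathcal{A}_{g,N} \;=\; \frac{g(g+1)}{2}.
\]
The hypothesis $p > g(g+1)/2$ therefore gives $\dim \mathcal{A}_{g,pN} < p$.

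For (2), I would realize a Sylow $p$-subgroup $U$ of $\Sp_{2g}(\mathbb{F}_p)$ as the unipotent radical of a Borel subgroup (of order $p^{g^2}$): picking the symplectic form so that an upper-triangular Borel is available, $U$ consists of upper-triangular unipotent matrices preserving the form. Already for $g=2$ one can exhibit two root subgroups whose commutator is a non-trivial root subgroup (for instance, the simple root subgroups generate a copy of the Heisenberg-type group), and for general $g\geq 2$ the unipotent radical contains a copy of the upper-triangular unipotent subgroup of the $\GL_g$-Levi, together with additional root subgroups, with non-trivial commutators. Hence $U$ is non-abelian for all $g\geq 2$.

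Given these two ingredients, the corollary follows at once. If $\mathcal{A}_{g,pN}$ were $p$-versal as an $\Sp_{2g}(\mathbb{F}_p)$-variety, then Corollary \ref{cor.versal}, applied with $G=\Sp_{2g}(\mathbb{F}_p)$ and $G_p=U$ non-abelian, would force $\dim \mathcal{A}_{g,pN}\geq p$, contradicting (1). The only step that is not entirely formal is the non-abelianness claim in (2), but this is a standard fact about Chevalley groups and is straightforward to verify by an explicit commutator computation in the root subgroups.
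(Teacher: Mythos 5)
Your proposal is correct and matches the paper's approach: both reduce to Corollary~\ref{cor.versal} by showing that a Sylow $p$-subgroup of $\Sp_{2g}(\mathbb{F}_p)$ is non-abelian for $g\geq 2$, after which the bound $\dim\mathcal{A}_{g,pN}=g(g+1)/2<p$ gives the contradiction. The only difference is cosmetic: the paper reduces to $g=2$ and exhibits two explicit non-commuting lower-triangular $4\times 4$ matrices, while you appeal to the Chevalley commutator relations among root subgroups in the unipotent radical of a Borel; either way the non-abelianness is standard and your argument is sound.
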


\begin{proof} By Corollary~\ref{cor.versal} it suffices to show that
  $G = \Sp_{2g}(\mathbb{F}_p)$ contains a non-abelian $p$-subgroup
  $H$. Since $G$ contains $\Sp_4(\mathbb{F}_p)$, we may assume without
  loss of generality that $g = 2$.  We may also assume that $G$ is the
  automorphism group of the symplectic form
  $x_1 \wedge x_4 + x_2 \wedge x_3$.  Let $H$ be the subgroup of
  lower-triangular matrices in $\Sp_4(\mathbb F_p)$. Clearly, $H$ is
  a $p$-group. An easy computation shows that the lower-triangular
  matrices
\[ a = \begin{pmatrix} 1 & 0 & 0 & 0 \\ 1 & 1 & 0 & 0 \\ 0 & 0 & 1 & 0 \\ 0 & 0 & -1 & 1 \end{pmatrix} 
\quad \text{and} \quad
b = \begin{pmatrix} 1 & 0 & 0 & 0 \\ 0 & 1 & 0 & 0 \\ 0 & 1 & 1 & 0 \\ 0 & 0 & 0 & 1 \end{pmatrix}
\] 
do not commute. Thus $H$ is a non-abelian subgroup of $G$, as claimed.
\end{proof}

\begin{remark}\label{r.benson-ad}  As mentioned in the introduction, 
  D.~Benson showed us a proof that, for $p>2$,  
$\ed_{\mathbb{C}}(\Sp_{2g}(\mb{F}_p);p)=p^{g-1}$.  With his permission,
we give it below in Theorem~\ref{t.Benson} of Appendix~\ref{a.benson}. 
\label{rem.non-versal} 
\end{remark}

\subsection{The fixed point method} 
Throughout this paper we will refer to the following result (and in particular to Proposition~\ref{prop.fixed-point}(b)) as ``the fixed point method.''

\begin{proposition} \label{prop.fixed-point} Let $G$ be a finite
  group, $X$  an irreducible generically free $G$-variety over a base field
  $k$ of characteristic $\neq p$, with $p$ a prime number.  Suppose
  $G$ has a subgroup $H$ such that $H$ is a $p$-group and $H$ has a
  smooth fixed point $X$. Then

\smallskip
(a) $\ed_G(X_0;p) \geqslant \ed_H(X_0; p) = \ed(H;p)$ for any dense open $G$-invariant subvariety $X_0 \subset X$.

\smallskip
(b) If $H=(\mathbb{Z}/p)^r$, then $\ed_G(X_0;p) \geqslant \ed_H(X_0; p) = r$.
\end{proposition}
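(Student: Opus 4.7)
The plan is to deduce both parts essentially formally from results already available earlier in the section: the Duncan--Reichstein theorem (Theorem~\ref{ex.versal}), Lemma~\ref{lem.versal}, Remark~\ref{rem.sbgrp}, and, for part (b), the computation in Remark~\ref{rem.elementary}.

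First I would dispatch the inequality $\ed_G(X_0;p) \geq \ed_H(X_0;p)$, which is simply Remark~\ref{rem.sbgrp} applied (after restricting to a dense open on which $G$ acts freely) to the subgroup $H \leq G$. The substantive content of the proposition is therefore the equality $\ed_H(X_0;p) = \ed(H;p)$.

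The key step is to promote the hypothesis of a smooth $H$-fixed point on $X$ to $p$-versality. Since $H$ is a finite $p$-group, $X$ is irreducible and generically free as an $H$-variety, $\car k \neq p$, and $X$ has a smooth $H$-fixed $k$-point, Theorem~\ref{ex.versal} tells us that $X$ is $p$-versal as an $H$-variety. Now $X_0$ is a dense open $G$-invariant, hence $H$-invariant, subvariety of $X$; moreover any dense open $H$-invariant subvariety of $X_0$ is automatically a dense open $H$-invariant subvariety of $X$, so the $p$-versality of $X$ (which is a birational condition) transfers directly to $X_0$. Lemma~\ref{lem.versal}(b) applied to $X_0$ then yields $\ed_H(X_0;p) = \ed(H;p)$, completing part (a).

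For part (b), the extra input is just Remark~\ref{rem.elementary}, which records that $\ed((\mathbb{Z}/p)^r;p) = r$ for any base field of characteristic different from $p$. Combining this with part (a) gives $\ed_G(X_0;p) \geq \ed_H(X_0;p) = r$. Since all the ingredients are already in place, there is no serious obstacle; the only thing to verify carefully is that $p$-versality descends from $X$ to the open $H$-invariant subvariety $X_0$, and this is immediate from the definition of $p$-versality as a property of all dense open $H$-invariant subvarieties.
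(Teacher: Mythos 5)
Your argument is correct and follows essentially the same route as the paper: Remark~\ref{rem.sbgrp} for the inequality, Theorem~\ref{ex.versal} (Duncan--Reichstein) to get $p$-versality from the smooth $H$-fixed point, Lemma~\ref{lem.versal}(b) to convert $p$-versality into the equality $\ed_H = \ed(H;p)$, and Remark~\ref{rem.elementary} for part (b). The only cosmetic difference is where birational invariance is invoked: the paper passes from $X_0$ to $X$ using birational invariance of $\ed(\cdot;p)$ and then applies versality to $X$, whereas you note that $p$-versality of $X$ immediately gives $p$-versality of $X_0$; these are equivalent observations.
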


In the case, where $\cha(k) = 0$,
Proposition~\ref{prop.fixed-point}(b) follows from~\cite[Theorem
7.7]{ry}, whose proof relies on equivariant resolution of
singularities.  The proof we present here does not use resolution of
singularities; in particular, it remains valid in prime
characteristic, as long as $\cha(k) \neq p$.

\begin{proof} (a) The inequality
  $\ed_G(X_0;p) \geqslant \ed_H(X_0; p)$ follows from
  Remark~\ref{rem.sbgrp}.  Since essential dimension at $p$ is a
  birational invariant of $H$-varieties,
  $\ed_H(X_0; p) = \ed_H(X; p)$.  Thus we may assume without loss of
  generality that $X_0 = X$. By Theorem~\ref{ex.versal}, $X$ is a
  $p$-versal $H$-variety.  Now Lemma~\ref{lem.versal} tells us that
  $\ed_H(X; p) = \ed(H; p)$.

  (b) By Remark~\ref{rem.elementary}, $\ed(H; p) = r$. The rest
  follows from part (a).
\end{proof}

\section{A criterion for the existence of fixed points}\label{s.crit}

In this section $k$ is an arbitrary algebraically closed field.

\begin{definition} \label{d.toric}
  $ $
  \begin{enumerate}
  \item A \emph{toroidal singularity} is a scheme $S$ over $k$
    together with an isomorphism of $S$ (which we suppress from the
    notation unless there is a possibility of confusion) with the
    spectrum of the completion of the local ring of a (normal) affine
    toric variety over $k$ at a torus fixed point.  We say that a
    toroidal singularity is \emph{simplicial} if the corresponding
    affine toric variety is simplicial.
  \item An action of a finite abelian group on a toroidal singularity
    is said to be toroidal if the action is induced via completion by
    the action of a finite subgroup of the torus on the corresponding
    toric variety.
  \item A \emph{toroidal map} of toroidal singularities is a morphism
    of schemes which is induced by completion from a toric morphism,
    (i.e., a morphism induced by a map of the corresponding
    semigroups) of the corresponding affine toric varieties.
  \end{enumerate}
\end{definition}

\begin{remark} \label{r.tor} The definitions (2) and (3) do depend on
  the choice of the toroidal structure, i.e., the isomorphism in (1).
\end{remark}

\begin{lemma} \label{l.sandwich} Let $\pi: T \to S$ be a finite
  surjective toroidal map of toroidal singularities. Then any normal
  scheme $S'$ with a finite map $\pi':S' \to S$ through which $\pi$
  factors is toroidal and the map $\pi'$ is also
  toroidal. Furthermore, if $S$ is simplicial then so is $S'$.
\end{lemma}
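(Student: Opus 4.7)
The plan is to analyze the factorization through the Galois-theoretic interpretation of the toric covering $\pi$. By unraveling the definitions, $T$ and $S$ are the spectra of the completions of affine toric varieties $U_T = \operatorname{Spec} k[P_T]$ and $U_S = \operatorname{Spec} k[P_S]$ at their torus-fixed points, and $\pi$ is induced by a semigroup map $P_S \to P_T$. Since $\pi$ is finite and surjective, standard toric geometry gives, after identifications, an inclusion of character lattices $M_S \hookrightarrow M_T$ with finite cokernel $N := M_T/M_S$, and $P_S = \sigma^\vee \cap M_S$, $P_T = \sigma^\vee \cap M_T$ for a common rational polyhedral cone $\sigma^\vee$. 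Let $H := \operatorname{Spec} k[N]$ be the finite diagonalizable group scheme with character group $N$; it embeds as a closed subgroup of the torus of $U_T$, and the inclusion $\mathcal O_S \hookrightarrow \mathcal O_T$ identifies $\mathcal O_S = \mathcal O_T^H$ (compatibly with completion).

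The heart of the proof is to identify $\mathcal O_{S'}$ with $\mathcal O_T^{H'}$ for some closed subgroup $H' \leq H$. Since $H$ is diagonalizable and acts freely on the dense torus orbit, the extension $\operatorname{Frac}(\mathcal O_S) \subset \operatorname{Frac}(\mathcal O_T)$ is generically Galois with group $H$. Galois correspondence then identifies the intermediate field $\operatorname{Frac}(\mathcal O_{S'})$ with $\operatorname{Frac}(\mathcal O_T)^{H'}$ for a unique subgroup $H' \leq H$. The ring $\mathcal O_T^{H'}$ is then a normal subring of $\mathcal O_T$, finite over $\mathcal O_S$, and with the same fraction field as $\mathcal O_{S'}$; by uniqueness of the integral closure of $\mathcal O_S$ in a fixed fraction field, $\mathcal O_{S'} = \mathcal O_T^{H'}$.

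Setting $M_{S'} := \{m \in M_T : \chi^m \in \mathcal O_T^{H'}\}$, so that $M_S \subset M_{S'} \subset M_T$, the character grading of $\mathcal O_T$ gives $\mathcal O_{S'} = \mathcal O_T^{H'} = \widehat{k[\sigma^\vee \cap M_{S'}]}$, the completion at the torus-fixed point of the affine toric variety $\operatorname{Spec} k[\sigma^\vee \cap M_{S'}]$. This exhibits $S'$ as a toroidal singularity with semigroup $P_{S'} := \sigma^\vee \cap M_{S'}$. The chain of semigroup inclusions $P_S \hookrightarrow P_{S'} \hookrightarrow P_T$ then realizes both $\pi' \colon S' \to S$ and the map $T \to S'$ as finite surjective toroidal morphisms of toroidal singularities.

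The main obstacle is the Galois-theoretic step: showing that an arbitrary normal intermediate ring $\mathcal O_{S'}$ is $H$-stable (i.e.\ of the form $\mathcal O_T^{H'}$). When $|N|$ is invertible in $k$, the Galois correspondence on fraction fields applies directly as above. More intrinsically, one works with the character grading $\mathcal O_T = \bigoplus_{\chi \in N} (\mathcal O_T)_\chi$ and shows that $\mathcal O_{S'}$ respects it; then the set $N' \subset N$ indexing its nonzero graded pieces is a submonoid of a finite group containing the identity, hence a subgroup, and setting $H' := \operatorname{Spec} k[N/N']$ recovers the identification. The simplicial statement is then immediate: the cone $\sigma^\vee$ is unchanged in passing from $(P_S, M_S)$ or $(P_T, M_T)$ to $(P_{S'}, M_{S'})$, and simpliciality is a property of the cone alone.
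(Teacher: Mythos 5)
Your route is essentially the paper's: reduce the toroidal map to a lattice inclusion with a common cone, invoke the Galois correspondence for the diagonalizable Galois group to identify normal intermediate rings with intermediate sublattices, and note that simpliciality is a property of the cone alone. Your treatment of the Galois-theoretic step, including the character-grading fallback when the degree is not prime to the characteristic, is even more explicit than the paper's, which merely cites ``the Galois correspondence.''

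However, you state as ``standard toric geometry'' that finiteness and surjectivity of $\pi$ produce a common rational polyhedral cone $\sigma^\vee$ with $P_S = \sigma^\vee \cap M_S$ and $P_T = \sigma^\vee \cap M_T$. This is the one non-routine point in the lemma, and the paper does not treat it as free. By the definition of a toroidal map, $\pi$ is only the \emph{completion} at the torus fixed points of a toric morphism $p\colon U_T \to U_S$; it is not given that $p$ itself is finite, nor, a priori, that the cone of $U_T$ maps onto the cone of $U_S$ under the induced lattice map. The paper proves this by contradiction: if the image of the cone of $U_T$ were a proper subcone of the cone of $U_S$, then some positive-dimensional face of the cone of $U_T$ would map into the interior of the cone of $U_S$, so the closure in $U_T$ of the corresponding torus orbit---a positive-dimensional subvariety passing through the torus fixed point---would be contracted to the torus fixed point of $U_S$, and then $\pi$ could not be finite. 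You should supply an argument of this kind before invoking the common-cone description; the remainder of your proof is sound.
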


\begin{proof}
  By definition, there exist affine toric varieties $U_T$ and $U_S$
  and a toric morphism $p:U_T \to U_S$ inducing $\pi$ by completion at
  the torus fixed points.  The morphism $p$ corresponds to a map of
  lattices $p_M: M_T \to M_S$ together with rational polyhedral cones
  $C_T \subset M_T \otimes\mathbb{R} $,
  $C_S \subset M_S \otimes \mathbb{R}$, such that
  $p_M(C_T) \subset C_S$. Since $\pi$ is finite and surjective, $M_T$
  and $M_S$ must have the same rank and $p_M$ must be injective. We
  claim that we must also have $p_M(C_T) = C_S$: if this does not
  hold, then some positive dimensional face $F$ of $C_T$ must map to
  the interior of $C_S$. This implies that the closure in $U_T$ of the
  torus orbit corresponding to $F$, $Z_F$, maps to the torus fixed
  point of $U_S$. Since $Z_F$ contains the torus fixed point of $U_T$,
  it follows that $\pi$ cannot be finite, which gives a contradiction.
  
  From $p_M(C_T) = C_S$ it follows that $p$ is finite, and then by
  the Galois correspondence we see that $S'$ is the completion of the
  toric variety corresponding to the cone induced by $C_T$ in a
  sublattice $M_{S'}$ of $M_S$ containing $M_T$. The assertions of the
  lemma follow immediately from this.
  
\end{proof}

\begin{lemma} \label{l.abhyankar} Let $S$ be a simplicial toroidal
  singularity and $S^o \subset S$ the open set which is the complement
  of the completion of the boundary divisor. Let $\pi^o:T^o \to S^o$
  be a connected finite \'etale Galois cover of degree not divisible by
  $\car(k)$ and let $T$ be the normalisation of $S$ in $T^o$. Then $T$
  is a simplicial toroidal singularity and the induced map
  $\pi:T \to S$ is also toroidal.
\end{lemma}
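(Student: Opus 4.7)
The plan is to apply Abhyankar's lemma on a prime-to-$p$ toric cover of $S$ and then descend via Lemma~\ref{l.sandwich}. Set $p=\operatorname{char}(k)$ and realise $S=\widehat{U_\sigma}_P$, where $U_\sigma$ is the simplicial affine toric variety associated to a maximal-dimensional simplicial rational polyhedral cone $\sigma\subset N_{\mathbb R}$ with primitive ray generators $v_1,\dots,v_n$, and $P$ is the torus fixed point; the boundary divisor of $S$ then corresponds to the toric boundary $\bigcup_i\overline{O(\rho_i)}$ of $U_\sigma$, so that $S^o$ is the completion along the big torus orbit. Set $N_0=\mathbb Z\langle v_1,\dots,v_n\rangle$, and let $N_0\subseteq N_1\subseteq N$ be the sublattice with $N/N_1$ equal to the prime-to-$p$ part of the finite group $N/N_0$. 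The toric morphism $U_{\sigma,N_1}\to U_\sigma$ is finite surjective and, on the big torus, is \'etale Galois of order prime to $p$; completing at a fixed point $P_1$ above $P$ yields a finite surjective toroidal map $\widetilde S:=\widehat{U_{\sigma,N_1}}_{P_1}\to S$ whose restriction $\widetilde S^o\to S^o$ is \'etale Galois of degree prime to $p$.

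Next, I would form the pullback $\widetilde T^o:=T^o\times_{S^o}\widetilde S^o\to\widetilde S^o$, a finite \'etale cover of degree coprime to $p$, and pick a connected component $\widetilde T^o_0\subseteq\widetilde T^o$ (all components are isomorphic and each surjects onto $T^o$). The crux is to apply Abhyankar's lemma to the tame cover $\widetilde T^o_0\to\widetilde S^o$: in the classical form when $\widetilde S$ is smooth (e.g., in characteristic zero, using that the prime-to-$p$ \'etale fundamental group of a punctured formal polydisc is $\hat{\mathbb Z}^{(p'),n}$), and in general via the log-regular Abhyankar theorem for simplicial toric singularities due to Kato, $\widetilde T^o_0$ is the completion of a toric cover $U_{\sigma,L}\to U_{\sigma,N_1}$ for some finite-index sublattice $L\subseteq N_1$ of prime-to-$p$ index. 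Consequently, the normalisation $\widetilde T$ of $\widetilde S$ in $\widetilde T^o_0$ is the completion $\widehat{U_{\sigma,L}}$ at a fixed point above $P_1$, hence a simplicial toroidal singularity, and $\widetilde T\to\widetilde S$ is toroidal.

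Finally, $\widetilde T\to\widetilde S\to S$ is a finite surjective toroidal map between (simplicial) toroidal singularities. Since $\widetilde T^o_0$ surjects onto $T^o$ and $T$ is normal (being the normalisation of $S$ in $T^o$), the universal property of normalisation yields a factorisation $\widetilde T\to T\to S$; Lemma~\ref{l.sandwich} applied to this sandwich then gives that $T$ is a simplicial toroidal singularity and that $T\to S$ is toroidal, completing the proof. The principal obstacle is the middle step, i.e., the Abhyankar classification of tame covers of $\widetilde S^o$: for smooth $\widetilde S$ this is routine, but when $[N:N_0]$ has a non-trivial $p$-part one cannot tamely reduce further to a formal polydisc, so $\widetilde S$ is genuinely singular and one must invoke the log-regular (toric) refinement of Abhyankar's lemma to obtain the required toric structure on $\widetilde T$.
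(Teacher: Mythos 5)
Your strategy is the same as the paper's: pass to an auxiliary toric cover of $S$, pull the given cover $T^o\to S^o$ back to it, classify the result by Abhyankar's lemma, and then descend the toroidal structure to $T$ via Lemma~\ref{l.sandwich}. The difference is in the choice of auxiliary lattice, and it matters. You pick $N_1$ (so that $\widetilde S^o\to S^o$ is tame), which leaves $\widetilde S$ a possibly singular simplicial toroidal singularity; this forces you to invoke Kato's log-regular refinement of Abhyankar's lemma, a step you correctly flag as the main obstacle. The paper instead uses the full sublattice $N_0$ generated by the primitive ray generators, so that $S_1=\widehat{U_{\sigma,N_0}}$ is \emph{regular} with normal crossings boundary, and the classical SGA~1 version of Abhyankar's lemma applies. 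The point you are missing is that one never needs the resolving cover $S_1^o\to S^o$ to be \'etale or tame: what Abhyankar classifies is the pullback $T^o\times_{S^o}S_1^o\to S_1^o$, and this is \'etale of degree prime to $p$ simply because \'etale morphisms are stable under arbitrary base change and $T^o\to S^o$ is \'etale of prime-to-$p$ degree by hypothesis. Thus the worry about the $p$-part of $N/N_0$ is self-inflicted; choosing $N_0$ rather than $N_1$ makes the log-regular machinery unnecessary and keeps the argument at the level of SGA~1. (Your final descent step is fine: one does not even need every component of the fibre product to surject onto $T^o$; it suffices that the chosen component maps there, and the factorisation $\widetilde T\to T$ then follows from normality of $T$ and finiteness over $S$, exactly as in the paper's appeal to Lemma~\ref{l.sandwich}.)
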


\begin{proof}
  Since $S$ is simplicial, there exists a smooth toroidal singularity
  $S_1$ together with a finite toroidal map $\pi_1: S_1 \to S$
  satisfying $S_1 \times_S S^o = S_1^o$, where $S_1^o$ is the open
  complement of the boundary. Let $S_2^o$ be a connected component of
  $S_1^o \times_{S^o} T^o$. The projection $\pi_2^o$ to $S_1^o$ makes
  $S_2^o$ a finite \'etale Galois cover of $S_1$ of degree not dividing
  $\car(k)$. Let $S_2$ be the normalisation of $S_1$ in $S_2^o$ and
  let ${\pi}_2: S_2 \to S_1$ be the induced map.

  By Abhyankar's lemma \cite[Expose XIII, \S 5]{SGA1}, which is
  applicable since ${S}_1$ is complete and regular and
  ${S}_1 \bs S_1^o$ is a normal crossings divisor, there exists a
  formally smooth toroidal singularity ${S}_3$, a toroidal map
  ${\pi}_3: {S}_3 \to {S}_1$ and a map ${\pi}_{3,2}: {S}_3 \to {S}_2$
  such that ${\pi}_3 = {\pi}_2 \circ {\pi}_{3,2}$.  Lemma
  \ref{l.sandwich} now implies that ${S}_2$ and ${\pi}_2$ are both
  toroidal.

  The composition of two finite toroidal maps of toroidal
  singularities is again toroidal, so it follows that
  ${\pi}_1 \circ {\pi}_3$ is toroidal. By construction, there is a map
  ${\pi}_T: S_3 \to {T}$ such that
  ${\pi}_1 \circ {\pi}_3 = {\pi} \circ {\pi}_T$, so by Lemma
  \ref{l.sandwich} once again, we see that ${T}$ is a simplicial
  toroidal singularity and ${\pi}$ is also toroidal.
 
\end{proof}

The lemma below is a version of \cite[Proposition A.2]{ry}, which we
also use in the proof.

\begin{lemma} \label{l.down} %
  Let $A$ be a finite abelian group acting on a variety $U$ and
  $p: U \to V$ a proper $A$-equivariant map, with $A$ acting trivially
  on $V$. If there exists a toroidal singularity ${T}$ with a toroidal
  $A$-action and an $A$-equivariant rational map
  $h:{T} \dashrightarrow U$ such that the map
  $p \circ h: {T} \dashrightarrow V$ is a morphism, then $A$ has a
  fixed point in $U$.
\end{lemma}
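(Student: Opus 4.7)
The plan is to form the scheme-theoretic closure $\Gamma\subseteq T\times U$ of the graph of $h$, use the assumption that $p\circ h$ is a morphism to force $\Gamma$ to be proper over $T$, and then reduce to the smooth toroidal case by a $\mathbb{T}$-equivariant toric resolution so that \cite[Proposition A.2]{ry} can be applied directly.

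First I would construct $\Gamma$ as the closure of the graph of $h$ in $T\times U$, equipped with the restriction of the diagonal $A$-action; $\Gamma$ is $A$-invariant because $h$ is $A$-equivariant. The hypothesis that $p\circ h\colon T\to V$ is a morphism places the graph of $h$, hence its closure $\Gamma$, inside the fiber product $T\times_V U$ (formed with respect to $p\circ h$ on $T$ and $p$ on $U$). Since $p$ is proper, so is the base change $T\times_V U\to T$, and therefore the projection $\pi_T\colon\Gamma\to T$ is proper. The closed point $t_0\in T$ comes from the torus-fixed point $x_0$ of the underlying affine toric variety $U_T$ and is $A$-fixed because the toroidality hypothesis places $A$ inside the torus $\mathbb{T}$. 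The fiber $\pi_T^{-1}(t_0)$ is then a non-empty proper $A$-invariant closed subscheme of $\Gamma$, and the image under the second projection $\pi_U\colon\Gamma\to U$ of any $A$-fixed closed point in this fiber is an $A$-fixed closed point of $U$.

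To finish the argument I would reduce to the smooth toroidal case. Choose a $\mathbb{T}$-equivariant toric resolution $q\colon U_{T'}\to U_T$ and a torus-fixed point $x'_0\in U_{T'}$ lying over $x_0$; let $T'$ be the completion of $U_{T'}$ at $x'_0$. Then $T'$ is a smooth toroidal singularity with $A$-fixed closed point $t'_0$, and the natural map of complete local rings yields a morphism $T'\to T$ sending $t'_0$ to $t_0$. Composing with $h$ produces an $A$-equivariant rational map $h'\colon T'\dashrightarrow U$ whose composition with $p$ is still a morphism. Applying \cite[Proposition A.2]{ry} to the closure $\Gamma'\subseteq T'\times U$ of the graph of $h'$ (which is again proper over $T'$ by the same fiber-product argument), the fiber of $\Gamma'\to T'$ over $t'_0$ contains an $A$-fixed closed point, whose image in $U$ is the desired fixed point.

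The principal obstacle is the reduction step: one needs to check carefully that $h$ really lifts to an $A$-equivariant rational map $h'$ on $T'$ (i.e., that the domain of definition of $h$ pulls back to a dense open in $T'$) and that all the required compatibilities hold between the completions at $x_0$ and $x'_0$, including that the image of $t'_0$ in $\Gamma_{t_0}$ is genuinely a point of the fiber and not just formally so. An alternative approach, closer to the direct proof of \cite[Proposition A.2]{ry}, is to work on $T$ itself: choose a cocharacter $\lambda\colon \mathbb{G}_m\to \mathbb{T}$ in the interior of the cone defining $U_T$ (so that $\lambda$ commutes with $A$ and contracts $T$ onto $t_0$) and adapt a Bia{\l}ynicki-Birula type limit argument to the family $\{\lambda(s)\cdot\Gamma\}_{s\in\mathbb{G}_m}$ inside $T\times U$, producing an $A$-fixed closed point in $\Gamma_{t_0}$ without passing through a resolution.
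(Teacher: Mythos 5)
The core difficulty in this lemma is bridging between the \emph{formal} world (the toroidal singularity $T$ is the $\operatorname{Spec}$ of a complete local ring, not a variety) and the \emph{geometric} world (we need an actual $A$-fixed closed point in the variety $U$). Your proposal does not actually resolve this.

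In your first paragraph, you correctly observe that $\Gamma\to T$ is proper and that $\Gamma_{t_0}:=\pi_T^{-1}(t_0)$ is a non-empty proper $A$-invariant closed subscheme lying inside $Z:=p^{-1}\bigl((p\circ h)(t_0)\bigr)$. But you then pass without justification to ``any $A$-fixed closed point in this fiber.'' There is no a priori reason such a point should exist: a non-empty proper $A$-scheme over $k$ need not have an $A$-fixed point (e.g.\ a finite abelian group acting freely on an abelian variety by translations). If $h$ happens to be defined at $t_0$ the fiber is a single (automatically $A$-fixed) point, but in general the fiber is positive-dimensional and nothing has been said about why $A$ should fix anything in it. This is exactly the gap the proof must fill.

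Your second paragraph tries to fill it by passing to a resolution $T'$ and invoking~\cite[Proposition A.2]{ry}, but this does not work as stated: $T'$ is again a formal completion, so $\Gamma'$ is proper over a complete local ring and is not a smooth proper \emph{variety}. Proposition A.2 of~\cite{ry} is a ``going down'' statement for equivariant rational maps whose source is a smooth complete $A$-variety with an $A$-fixed point; it cannot be applied to $\Gamma'\to T'$ or to anything built directly out of the formal scheme $T'$. The resolution $T'$ of $T$ therefore buys you nothing toward the existence of an $A$-fixed point.

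The paper's proof gets around this by performing a \emph{star subdivision} of the cone of the toric model $C$ of $T$ along an interior ray. This is a genuine proper birational toric morphism $C^{\sharp}\to C$ whose fiber over the torus-fixed point $c_0$ is an irreducible toric divisor $E$ --- a genuine proper \emph{variety} with a toric $A$-action, even though $T$ itself is formal. Using that $\mathscr{O}_{E,T^{\sharp}}$ is a DVR, the hypothesis that $p\circ h$ is a morphism, and the valuative criterion of properness, one extends $h$ along the generic point of $E$ and obtains an $A$-equivariant rational map $\tilde{E}\dashrightarrow Z$ for a toric resolution $\tilde{E}$ of $E$. Since $\tilde{E}$ is a smooth proper toric variety with $A$ inside the torus, it has a smooth $A$-fixed point, and only then can~\cite[Proposition A.2]{ry} be invoked (with source $\tilde{E}$). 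This interpolation between the closed point of $T$ and the divisor $E$ is precisely the step your argument lacks. Your closing remark about a Bia{\l}ynicki-Birula contraction gestures at a plausible alternative, but it is not developed and would still require care about what limiting along a cocharacter means on the formal scheme $\Gamma$.
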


\begin{proof}
  Let $C$ be an affine toric variety with a torus fixed point $c_0$
  such that ${T} \cong \widehat{\mathscr{O}}_{c_0,C}$.  Let
  $C^{\sharp}$ be the toric variety corresponding to the star
  subdivision of the cone corresponding to $C$ with respect to a ray
  in its interior. We have a birational proper morphism
  $C^{\sharp} \to C$ with the property that the fibre over $c_0$ is an
  irreducible divisor $E$.  Let
  $T^{\sharp} = {T} \times_{C}C^{\sharp}$; $T^{\sharp}$ is normal
  because the local rings at its closed points, which are the closed
  points of $E$, are the completions of the corresponding local rings
  on $C^{\sharp}$ which is normal (and excellent).  The map
  $C^{\sharp} \to C$ induces a proper birational morphism
  $\phi:T^{\sharp} \to {T}$ whose fibre over $t_0$, the closed
  point of ${T}$, is $E$. Since the action of $A$ on ${T}$ is
  induced from the torus action on $C$, $A$ acts on $T^{\sharp}$ and
  $E$ and the action on $E$ is toric.

  Let $\mathscr{O}_{E, T^{\sharp}}$ be the local ring of $E$ on
  $T^{\sharp}$. Since $E$ is a divisor and $T^{\sharp}$ is normal,
  $\mathscr{O}_{E, T^{\sharp}}$ is a dvr.  The map $h$ may be viewed
  as an $A$-equivariant rational map from $T^{\sharp}$ to $U$,
  so it gives an $A$-equivariant map
  $\spec(\fr(\mathscr{O}_{E, T^{\sharp}})) \to U$. It follows
  from the 
  assumptions that the composite of the maps
  \[
    \spec( \mathscr{O}_{E, T^{\sharp}}) \to {T}
    \stackrel{h}{\dashrightarrow} U \stackrel{p}{\to}
    V .
  \]
  is a morphism. By the valuative criterion of properness, the
  composed morphism $\spec(\mathscr{O}_{E, T^{\sharp}}) \to V$
  lifts to a morphism $\spec(\mathscr{O}_{E, T^{\sharp}}) \to U$
  which is $A$-equivariant, so we get an $A$-equivariant rational map
  from $E$ to $U$. Since $E$ lies over the closed point $t_0$ of
  ${T}$, this rational map factors through
  $Z = p^{-1}(\bar{f} \circ h(t_0)) \subset U$, which is
  a proper scheme. We now replace $E$ by any toric resolution
  $\tilde{E}$ of $E$, so we have an $A$-equivariant rational map from
  $\tilde{E}$ to $Z$. Since the $A$-action on $E$ comes from the
  torus and $\tilde{E}$ is proper, $E$ has a (smooth) $A$-fixed
  point. By \cite[Proposition A.2]{ry} it follows, that $Z$, hence
  also $U$, has an $A$-fixed point.
\end{proof}

\begin{proposition} \label{p.criterion} Let $Y$ be an irreducible
  variety over $k$ and $f:X \to Y$ an irreducible finite \'etale
  Galois cover with Galois group $G$. Let ${S}$ be a simplicial
  toroidal singularity and $S^o \subset {S}$ the complement of the
  boundary divisor. Suppose there exists a morphism $g: S^o \to Y$
  such that the image of the composite of
  $\pi_1^{\mathrm{et}}(S^o,s) \stackrel{g_*}{\to}
  \pi_1^{\mathrm{et}}(Y,g(s)) \twoheadrightarrow G$ \footnote{The
    second map is well defined only up to conjugation, but the claim
    does not depend on this choice.}  is a finite (abelian) group $A$
  of order not divisible by $\car(k)$ (for $s$ any geometric point of
  $S^o$). Assume that $g$ extends to a morphism
  $\bar{g}: {S} \to \ovl{Y}$, where $\ovl{Y} \supset Y$ is a partial
  compactification of $Y$. Then
  \begin{enumerate}
  \item Any $G$-equivariant partial compactification
    $\ovl{X} \supset X$ admitting a proper morphism
    $\bar{f}: \ovl{X} \to \ovl{Y}$ extending $f$ has an $A$-fixed
    point.
  \item Any smooth proper variety $X'$ with a $G$-action which is
    equivariantly birational to $X$ has an $A$-fixed point.
    \end{enumerate}
\end{proposition}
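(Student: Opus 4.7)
The plan is to pull back the cover $f$ along $g$, isolate a connected component of the pullback that is Galois under (a conjugate of) $A$, normalize $S$ in that component to obtain a toroidal $A$-cover of $S$ via Lemma~\ref{l.abhyankar}, and then feed the resulting data into Lemma~\ref{l.down}. Both parts of the proposition will follow from this single construction by choosing the proper target variety $V$ appropriately.

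For part (1), the pullback $X \times_Y S^o \to S^o$ is a $G$-torsor whose connected components, by the monodromy hypothesis, are torsors under conjugates of $A$. Note that $A$-fixed points on $\bar X$ exist if and only if $gAg^{-1}$-fixed points do (translate by $g$), so we may replace $A$ by a conjugate and choose a connected component $T^o \to S^o$ that is honestly $A$-Galois. Let $\pi: T \to S$ be the normalization of $S$ in $T^o$. Since $|A|$ is coprime to $\car(k)$ and $S$ is simplicial toroidal, Lemma~\ref{l.abhyankar} tells us that $T$ is a simplicial toroidal singularity, $\pi$ is toroidal, and in particular the $A$-action on $T$ is toroidal. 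The tautological $A$-equivariant morphism $T^o \hookrightarrow X \times_Y S^o \to X$ composed with $X \hookrightarrow \bar X$ yields a rational $A$-equivariant map $h: T \dashrightarrow \bar X$. On the dense open $T^o$ one has $\bar f \circ h = g \circ \pi|_{T^o}$, and this agrees with the everywhere-defined morphism $\bar g \circ \pi: T \to \bar Y$; hence $\bar f \circ h$ extends to a morphism from $T$ to $\bar Y$. Since $G$, and therefore $A$, acts trivially on $\bar Y$, and since $\bar f$ is proper, Lemma~\ref{l.down} applied with $U = \bar X$, $V = \bar Y$, $p = \bar f$ produces an $A$-fixed point on $\bar X$.

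For part (2), keep the same $T$. The $G$-equivariant birational equivalence between $X$ and $X'$ gives a $G$-equivariant rational map $X \dashrightarrow X'$, and composing with the $A$-equivariant morphism $T^o \to X$ yields a rational $A$-equivariant map $h': T \dashrightarrow X'$. Since $X'$ is proper, one can apply Lemma~\ref{l.down} with $U = X'$, $V = \Spec k$, and $p$ the structure morphism (trivially $A$-equivariant), producing an $A$-fixed point on $X'$.

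The main technical hurdle is concentrated in Lemma~\ref{l.abhyankar}, which already bundles the use of Abhyankar's lemma needed to equip $T$ with the toroidal structure and toroidal $A$-action required by Lemma~\ref{l.down}; once that is in hand, the proof reduces to unpacking definitions and checking equivariance and properness of the relevant maps. A minor subtlety worth recording is the ambiguity of $A$ only up to conjugation in $G$, which is harmless for the fixed-point conclusion.
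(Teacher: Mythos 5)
Your proof of part (1) follows the paper's argument essentially verbatim: pull back along $g$, take a connected component $T^o$ of $S^o\times_Y X$, normalize to obtain the toroidal $A$-cover $T\to S$ via Lemma~\ref{l.abhyankar}, and feed $(T, h, \bar f)$ into Lemma~\ref{l.down} with $U = \ovl X$, $V = \ovl Y$, $p = \bar f$. Your remark about the conjugacy ambiguity in $A$ is accurate and corresponds to the paper's footnote.

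Part (2) is where you diverge from the paper, and where there is a gap. You apply Lemma~\ref{l.down} directly with $U = X'$ and $V = \Spec k$, which requires a well-defined $A$-equivariant rational map $h'\colon T \dashrightarrow X'$. You produce it by composing the morphism $T^o \to X$ with the $G$-equivariant rational map $X \dashrightarrow X'$, but this composite need not be a well-defined rational map: if the generic point of $T^o$ lands in the indeterminacy locus of $X \dashrightarrow X'$, then no dense open of $T$ carries a morphism to $X'$. Concretely, the domain $U_X$ of the rational map $X\dashrightarrow X'$ is a $G$-stable dense open of $X$, hence saturated for the quotient $f\colon X\to Y$, and the composite is defined precisely when the generic point of $S^o$ maps under $g$ into the dense open $f(U_X)\subset Y$. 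Nothing in the hypotheses forces this: $g\colon S^o\to Y$ is not assumed dominant, and $\dim S$ may be far smaller than $\dim Y$. (In all the paper's applications $g$ has dense image, so the issue does not arise there, but the proposition as stated allows $g$ with small image.) Part (1) sidesteps the issue entirely because $X\hookrightarrow\ovl X$ is an honest open immersion, so $T^o\to X\hookrightarrow\ovl X$ is automatically a morphism. The paper instead obtains (2) by combining (1) with the Koll\'ar--Szab\'o going-down theorem \cite[Proposition~A.2]{ry}, rather than trying to run Lemma~\ref{l.down} directly against $X'$; you should follow that route.
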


The statement and proof of this proposition have several elements in
common with \cite[\S 6]{cgr}.

\begin{proof}
  Consider the scheme $S^o \times_Y X$. It is finite \'etale over
  $S^o$ and has an action of $G$ induced by the action on $X$. The
  assumption on the map on fundamental groups implies that the
  connected components of $S \times_Y X$ are Galois covers of $S$ with
  Galois group $A$. Let $T^o$ be any one of these components and let
  $\pi^o:T^o \to S^o$ be the covering map.

  Let ${T}$ be the normalisation of ${S}$ in the function field of
  $T^o$, so ${T} \times_{{S}} S^o = T^o$. It follows from Lemma
  \ref{l.abhyankar} that ${T}$ and the induced map
  ${\pi}: {T} \to {S}$ are both toroidal. Since $A$ is the Galois
  group of the covering $\pi^o:T^o \to S^o$, the action of $A$ on
  ${T}$ is also toroidal.

  Let $h:{T} \dashrightarrow \ovl{X}$ be the $A$-equivariant rational
  map corresponding to the natural morphism from $T$ to $X$. The
  composed map $\bar{f} \circ h$ is equal to $\bar{g} \circ \pi$, so
  it is a morphism. We now apply Lemma \ref{l.down}, with
  $U = \ovl{X}$, $V = \ovl{Y}$, $p = \bar{f}$, to complete the proof
  of (1). Part (2) follows from (1) and ``going down''
  \cite[Proposition A.2]{ry}.
\end{proof}

\begin{remark}
  If $k = \mathbb{C}$ we may take $S$ to be the analytic germ of a
  toric variety at a torus fixed point, ${S^o}$ the complement of the
  toric boundary in $S$, and $g$, $\bar{g}$ to be complex analytic
  maps. This follows from Proposition \ref{p.criterion} by completing
  the local ring corresponding to $S$.
\end{remark}

\begin{theorem} \label{t.compress} Let $X$ be a smooth variety over an
  algebraically closed field $k$ with a free action of a finite group
  $G$. Let $A \subset G$ be an elementary abelian $p$-group of rank
  $r$ and let $Y = X/ G$.  Let ${S}$ be a simplicial toroidal
  singularity and $S^o \subset {S}$ the complement of the boundary
  divisor. Suppose there exists a morphism $g: S^o \to Y$ such that
  the image of the composite of
  $\pi_1^{\mathrm{et}}(S^o,s) \stackrel{g_*}{\to} \pi_1^{\mathrm{et}}(Y,g(s))$ is $A$.
  Furthermore, assume that $g$ extends to a morphism
  $\bar{g}: {S} \to \ovl{Y}$, where $\ovl{Y} \supset Y$ is a partial
  compactification of $Y$. If there exists a smooth partial
  compactification $\ovl{X} \supset X$ together with a proper morphism
  $\bar{f}: \ovl{X} \to \ovl{Y}$ extending $f$ then
  \[
    \ed_G(X;p) \geq \ed_A(X;p) = r .
  \]
\end{theorem}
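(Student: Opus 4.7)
The plan is to deduce this theorem directly from Proposition~\ref{p.criterion} and the fixed point method of Proposition~\ref{prop.fixed-point}, which together have been assembled precisely for this kind of application. The two results must be chained carefully, and the key technical check is that the hypotheses of both line up on the nose.

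First I will unpack the situation: since $G$ acts freely on the smooth variety $X$ with quotient $Y = X/G$, the map $f\colon X \to Y$ is a connected finite \'etale Galois cover with Galois group $G$, so the setup of Proposition~\ref{p.criterion} applies. The image of the composite $\pi_1^{\mathrm{et}}(S^o,s) \to \pi_1^{\mathrm{et}}(Y,g(s)) \twoheadrightarrow G$ is by hypothesis $A$, which is elementary abelian of rank $r$ and order $p^r$; since we are working at the prime $p$, the relevant characteristic assumption $\car(k) \neq p$ (needed in both Proposition~\ref{p.criterion} via Lemma~\ref{l.abhyankar} and in Proposition~\ref{prop.fixed-point}) must be in force, and I will state this at the start.

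Next I will apply Proposition~\ref{p.criterion}(1), taking $\ovl{X}$ as the $G$-equivariant partial compactification and $\bar{f}\colon \ovl{X} \to \ovl{Y}$ as the required proper extension of $f$. The conclusion is that $\ovl{X}$ carries an $A$-fixed point. Because $\ovl{X}$ is smooth by hypothesis, this fixed point is automatically a smooth $A$-fixed point.

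Finally I will invoke Proposition~\ref{prop.fixed-point}(b) on the $G$-variety $\ovl{X}$: it is irreducible and generically free since $X \subset \ovl{X}$ is a dense open $G$-torsor, and $A = (\mathbb{Z}/p)^r$ is a $p$-subgroup of $G$ with a smooth fixed point on $\ovl{X}$. Applying the proposition with $X_0 = X$ yields
\[
\ed_G(X;p) \geq \ed_A(X;p) = r,
\]
which is the claimed inequality. The statement is essentially a corollary, so there is no real obstacle beyond verifying that the hypotheses of the two propositions match; the genuine content of the argument is already contained in Proposition~\ref{p.criterion} (whose proof rests on Abhyankar's lemma and the valuative-criterion argument of Lemma~\ref{l.down}) and in the Duncan--Reichstein $p$-versality theorem (Theorem~\ref{ex.versal}) that drives the fixed point method.
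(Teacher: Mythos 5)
Your proof is correct and follows exactly the route taken in the paper: deduce an $A$-fixed point on $\ovl{X}$ from Proposition~\ref{p.criterion} and then conclude via Proposition~\ref{prop.fixed-point}(b). Your explicit note that $\car(k)\neq p$ is an implicit hypothesis is a careful and accurate observation, as is the remark that smoothness of $\ovl{X}$ is what makes the fixed point a smooth one.
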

Note that the assumption on the existence of $\ovl{X}$ is always
satisfied if $\car(k) = 0$.

\begin{proof}
  Since the $G$-action on $X$ is free, the quotient map $f: X \to Y$
  is finite \'etale. By Proposition \ref{p.criterion}, $\ovl{X}$ has an
  $A$-fixed point, so the theorem follows by applying Proposition
  \ref{prop.fixed-point} (taking $H$ there to be $A$).
\end{proof}

\section{The moduli space of curves}\label{s.tqft}

In this section we prove incompressibility results for two types of
covers of $\mathcal{M}_g$, first for covers that are pullbacks of
congruence covers of $\mathcal{A}_g$ and then for certain covers
arising from TQFTs. For congruence covers, our proof is characteristic
free and so extends the incompressibility over fields of
characteristic zero already proved in \cite{FKW19} to positive
characteristics; for the covers of the second type, the methods of
\emph{op.~cit.}~do not apply. Note that in \emph{op.~cit.},
incompressibility for $\mathcal{M}_g$ is deduced using the Torelli map
to $\mathcal{A}_g$, but our proof does not use this.

Both results are applications of Theorem \ref{t.compress}. In
\S\ref{s.monod} we make some monodromy computations needed for both
proofs, incompressibility for congruence covers is then proved in
\S\ref{s.congmg} and for the ``quantum'' covers in \S\ref{s.quantmg}.

For the basics of mapping class groups needed for this section the
reader may consult \cite{farb-margalit}.

\subsection{A monodromy computation} \label{s.monod}

\subsubsection{} \label{s.pants}

Let $g \geq 2$ and let $\Sigma$ be a closed oriented surface of genus
$g$. Let $\Mod(\Sigma)$ be the mapping class group of
$\Sigma$. Corresponding to any pants decomposition of $\Sigma$,
equivalently a collection $P$ of $3g-3$ mutually non-isotopic and
non-intersecting loops $\gamma_i$ in $\Sigma$, there is a free abelian
group $F_P(\Sigma) \subset \Mod(\Sigma)$ of rank $3g-3$ generated by
the Dehn twists around these loops.  Let $\Gamma_P$ be the dual graph
of the pants decomposition given by $P$: the vertices of $\Gamma_P$
are the connected components of $\Sigma \bs \cup_i \gamma_i$ and for
each $\gamma \in P$ there is an edge $e_{\gamma}$ joining the vertices
corresponding to the two components of which $\gamma$ is in the
boundary. This is a trivalent graph, possibly with loops and multiple
edges.

There is a canonical map $h_P:F_P(\Sigma) \to \Aut(H_1(\Sigma,
\mb{Z}))$ sending a diffeomorphism to its action on homology. This
map is not always injective; for example, the Dehn twist around a
separating loop acts trivially on homology. However, we have the following:

\begin{lemma} \label{l.maxmon} %
  For suitable choices of $P$ the map $h_P$ is injective; in fact, for
  any integer $N > 1$, the reduction of $h_P(F_P(\Sigma))$ in
  $\Aut(H_1(\Sigma, \mb{Z}/N\mb{Z}))$ is a free
  $\mb{Z}/N\mb{Z}$-module of rank $3g-3$.
\end{lemma}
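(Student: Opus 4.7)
The plan is to translate the lemma into a question about integer linear algebra and then construct a specific pants decomposition for which the corresponding linear map is a split injection. Recall that a Dehn twist $T_\gamma$ acts on $H_1(\Sigma,\mb{Z})$ as the symplectic transvection $x \mapsto x + \langle x,[\gamma]\rangle [\gamma]$, where $\langle \cdot,\cdot\rangle$ is the intersection form; the same formula is valid modulo any $N$. Since the curves $\gamma_i$ of $P$ are pairwise disjoint, their classes $v_i := [\gamma_i]$ span an isotropic sublattice $L \subseteq H_1(\Sigma,\mb{Z})$, the transvections $T_{\gamma_i}$ commute, and
\[
\prod_i T_{\gamma_i}^{n_i}(x) \;=\; x + \sum_i n_i\,\langle x,v_i\rangle\, v_i.
\]
Thus the reduction of $h_P$ modulo $N$ is injective on $F_P(\Sigma) \cong \mb{Z}^{3g-3}$ for every $N \geq 2$ if and only if the $\mb{Z}$-linear map
\[
\Phi\colon \mb{Z}^{3g-3} \arr \Sym^2 L, \qquad (n_i) \mapsto \sum_i n_i\, v_i v_i^{\top},
\]
is a split injection, i.e., has torsion-free cokernel. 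In particular one must have $\rk(L) = g$, so that $L$ is a full maximal isotropic sublattice.

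To realize such a $P$, I would first choose $g$ disjoint non-separating simple closed curves $\alpha_1,\ldots,\alpha_g$ in $\Sigma$ whose classes $e_i := [\alpha_i]$ form a $\mb{Z}$-basis of a maximal isotropic sublattice $L \cong \mb{Z}^g$. Cutting along them produces a sphere $S$ with $2g$ boundary circles which come in pairs $(h_i^+,h_i^-)$ from the two sides of $\alpha_i$. On $S$ I would take a \emph{caterpillar} pants decomposition whose $2g$ boundaries appear along the backbone in the order $h_1^+,h_2^+,\ldots,h_g^+,h_1^-,h_2^-,\ldots,h_g^-$. A direct calculation shows that the $2g-3$ caterpillar cutting curves then have classes of the ``interval-sum'' form $e_a + e_{a+1} + \cdots + e_b$; explicitly one obtains the prefix intervals $\{1,\ldots,\ell\}$ for $2 \leq \ell \leq g$ and the suffix intervals $\{m,\ldots,g\}$ for $2 \leq m \leq g-1$, for a total of $(g-1)+(g-2)=2g-3$ curves. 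Together with the $\alpha_i$, these constitute the required $3g-3$ loops of $P$.

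The main technical obstacle is verifying split injectivity of the resulting $\Phi$. Decomposing $\Sym^2 L \cong \bigoplus_i \mb{Z}\,E_{ii} \,\oplus\, \bigoplus_{i<j} \mb{Z}\, f_{ij}$ with $f_{ij} := E_{ij}+E_{ji}$, the contribution of $\alpha_1,\ldots,\alpha_g$ maps isomorphically onto the diagonal summand, and the problem reduces to showing that the $2g-3$ off-diagonal vectors coming from the interval-sum classes generate a saturated sublattice of $\bigoplus_{i<j}\mb{Z}\, f_{ij} \cong \mb{Z}^{g(g-1)/2}$. I would handle this by an explicit pivot argument: the prefix differences $P_\ell - P_{\ell-1} = \sum_{j<\ell} f_{j\ell}$ have pivot column $f_{1\ell}$ (with entry $1$) for $\ell = 2,\ldots,g$, and the suffix differences $Q_m - Q_{m+1} = \sum_{j>m} f_{mj}$ have pivot column $f_{m,m+1}$ (with entry $1$) for $m = 2,\ldots,g-1$. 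These $2g-3$ pivot columns are pairwise distinct and each pivot entry is $\pm 1$; standard linear algebra then forces the $\mb{Z}$-span to equal its $\mb{Q}$-span intersected with the ambient lattice, which is the required saturation. This yields the split injectivity of $\Phi$ and completes the proof.
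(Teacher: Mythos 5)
Your proof is correct and the reduction is exactly right: since the pants curves span an isotropic sublattice $L$, the image of $F_P(\Sigma)$ in $\Aut(H_1(\Sigma,\mathbb{Z}/N))$ is free of rank $3g-3$ for all $N$ precisely when $(n_i)\mapsto\sum n_i v_iv_i^{\top}\in\Sym^2L$ has a unimodular $(2g-3+g)\times(2g-3+g)$ minor, and your caterpillar decomposition with the ordering $h_1^+,\dots,h_g^+,h_1^-,\dots,h_g^-$ does produce the interval-sum classes you claim, whose differences admit the pivot structure you describe (the suffix rows hit only $\{m,m+1\}$ among the chosen columns, so expanding along them leaves the diagonal $\{1,\ell\}$ block). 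The paper, however, takes a genuinely different route to the same saturation statement. Rather than reduce to $\Sym^2L$, it builds a specific trivalent dual graph from a convex $g$-gon with interior structure, arranged so that the classes $A_i=[\gamma_i]$, $1\le i\le g$, give the diagonal and so that each of the remaining $2g-3$ curves $\gamma_i$ has class $\pm A_k\pm A_l$, a sum of exactly \emph{two} basis vectors, with the unordered pairs $\{k,l\}$ pairwise distinct (this is forced by trivalence of the graph: two polygons share at most one edge). In $\Sym^2 L$ these contribute $\pm f_{kl}$ modulo the diagonal for $2g-3$ distinct pairs $\{k,l\}$, i.e.\ a sub-collection of basis vectors of the off-diagonal part, so linear independence modulo every $N$ is immediate and no pivot/determinant computation is needed. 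Your construction trades this for a slightly more involved linear-algebra step, but is just as valid; if anything, stating the criterion explicitly as a split injection into $\Sym^2L$ is a clean conceptual reformulation. One small inaccuracy worth flagging: ``split injection, i.e., has torsion-free cokernel'' conflates the two (a non-injective map can have torsion-free cokernel), and the parenthetical ``one must have $\rk(L)=g$'' is false for $g\geq 6$ (where $\tfrac{(g-1)g}{2}\geq 3g-3$)---neither affects your argument since you do take $\rk L=g$, but both should be stated more carefully.
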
 

\begin{proof} 
  We explain the construction of one such $P$ by giving the dual graph
  as a graph embedded in the plane: Start with a convex $g$-gon $R$
  with edges labelled (consecutively) $e_1,e_2,\dots,e_g$. Inside $R$
  we choose $g-2$ distinct collinear points and connect them with
  edges $e_{g+1}, e_{g+2}, \dots, e_{2g-3}$. We then connect each of
  the vertices of $R$ with one of the chosen points in the interior by
  an edge in such a way that no edges intersect (except at vertices)
  and such that the resulting graph is trivalent. It is easy to see
  that this is always possible; the cases $g=2$ require some minor
  modifications which we leave to the reader. We call these edges
  $e_{2g-2}, e_{2g-1},\dots, e_{3g-3}$ and the resulting graph
  $\Gamma$. The edges of $\Gamma$ decompose the interior of $R$ into
  $g$ polygons with disjoint interiors; we label these
  $R_1, R_2,\dots, R_g$ with the edge $e_j$ lying in $R_j$,
  $j=1,2,\dots,g$.

  We construct a genus $g$ surface $\Sigma$ and a pants decomposition
  $P$ of $\Sigma$ such that $\Gamma \cong \Gamma_P$ by ``fattening''
  all the edges of $\Gamma$ with the loops $\gamma_i$ being circles
  (in $\mb{R}^3$) centered at a point on the interior of each edge
  $e_i$. A basis of $H_1(\Sigma, \mb{Z})$ is given as follows: let
  $A_j$, $j=1,2,\dots,g$ be the homology class of the circle
  $\gamma_j$. Each polygon $R_j$ also gives rise to a simple loop on
  $\Sigma$ whose homology class we call $B_j$. Note that we have not
  specified orientations of the various loops: for our purposes it
  suffices to arbitrarily fix one choice for each loop.

  We now compute the effect on homology of the Dehn twist
  $t_{\gamma_i}$ around the loop $\gamma_i$, starting with
  $i=1,2\dots, g$. By construction, we have
  $A_j\cdot B_k = \pm \delta_{j,k}$ it follows that we have:
\begin{align*}
  t_{\gamma_i}(A_j) &= A_j \ \forall j\\
  t_{\gamma_i}(B_j) &= B_j \ \forall j \neq i\\
  t_{\gamma_i}(B_i) &= B_i \pm A_i
\end{align*}
We now consider the Dehn twists around the loops $\gamma_i$ for
$i=g+1,\dots, 3g-3$. For such $i$ it is clear that
\begin{equation*}
  t_{\gamma_i}(A_j) = A_j \ \forall j.
\end{equation*}
The edge $e_i$ corresponding to the loop $\gamma_i$ lies on the
boundary of exactly two polygons $R_k$ and $R_l$, so
\[
  t_{\gamma_i}(B_j) = B_j \ \forall j \neq k,l.
\]
Finally, we have
\[
  t_{\gamma_i}(B_k) = B_k \pm A_k \pm A_l; \ t_{\gamma_i}(B_l) = B_l
  \pm A_l \pm A_k .
\]
The condition that the graph $\Gamma$ is trivalent implies that
distinct polygons $R_k$ and $R_l$ share at most one edge. From this
and the formulae above it follows that the images of the elements
$\{h_P(t_{\gamma_i})\}_{i=1}^{3g-3}$ in
$\Aut(H_1(\Sigma, \mb{Z}/N\mb{Z}))$ are linearly independent for all
$N>1$
\end{proof}

\subsubsection{} \label{s.versal}

Let $C_o$ be a totally degenerate stable curve of genus $g$ over
$\mathbb{C}$, and let $\nu:C \to B$ be the (analytic) versal
deformation of $C_o$, so $B$ is a ball of dimension $3g-3$ and
$C_o$ is the fibre over a point $o \in B$. Let $D \subset B$ be the
divisor over which $\nu$ is not smooth; this is a normal crossings
divisor with $3g-3$ irreducible components $D_i$. Let $B^o = B \bs D$,
and let $C_{B^o} = C \times_{B} B^o$, so $\nu: C_{B^o} \to B^o$ is a
family of smooth projective curves of genus $g$. Fixing a point
$u \in B^o$, we get a monodromy representation
$\rho:\pi_1(B^o,u) \to \Mod(C_u)$. The image of this homomorphism is a
free abelian group of rank $3g-3$; in fact, it is a subgroup of the
form $F_P(\Sigma)$ that we have considered above for a suitable
$P$. This can be seen as follows:

Choose a disc $B'$ in $B$ centered at the point $b_0$ corresponding to
$C_o$ and meeting each component $D_i$ of $D$ transversally and let
$C_{B'} = C \times_{B} B'$. Since $C_o$ is totally
degenerate it has $3g-3$ singular points $p_i$ each of which is an
ordinary double point. The divisors $D_i$ are in a natural bijection
with the singular points, $D_i$ being the locus of all points in $B$
over which the point $p_i$ ``remains singular''.  For each $p_i$
choose a small sphere $S_i$ in $C_{B^o}$ (in some fixed
metric) centered at $p_i$. Then if $b \in B' - \{b_0\}$ is sufficiently
close to $b_0$, $S_i \cap C_b$ is a simple loop $\gamma_i$
and these loops are pairwise disjoint (see, e.g., \cite[Expose
XIV]{SGA72}).  These $3g-3$ loops give rise to a pants decomposition
$P$ of $C_b$ (viewed as a differential manifold) and as a
topological space $C_o$ is obtained from $C_b$ by
contracting each $\gamma_i$ to a point.  The monodromy representation
then maps a loop $\delta_i$ in $B^o$ based at $b$ and going round the
divisor $D_i$ once to the Dehn twist (up to a sign depending on
orientations) corresponding to the loop $\gamma_i$ \cite[\S
2]{donaldson}.

\subsubsection{} \label{s.mon}

Any trivalent graph $\Gamma$ with $2g-2$ vertices gives rise to a
unique totally degenerate stable curve $C_{\Gamma}$ of genus $g$ over
any field $k$. We choose a copy of $\mb{P}_k^1$ with three marked
rational points for each vertex of $\Gamma$ and label the marked
points with the edges incident on the vertex, a loop being counted
twice.  We glue these copies of $\mb{P}_k^1$ along the marked points
by identifying pairs of points marked by the same edge of
$\Gamma$. The pants decomposition $P$ that one gets by taking $C_o$ to
be $C_{\Gamma}$ has $\Gamma_P \cong \Gamma$.

For an arbitrary field $k$ and a totally degenerate stable curve $C_o$
over $k$, we have a versal deformation $\nu:C \to B$ of $C_o$, where
now $B = \spec k [[x_1,x_2,\dots,x_{3g-3}]]$. As above, $\nu$ is
smooth outside a normal crossings divisor $D \subset B$ (which we may
take to be the zero set of $x_1x_2,\dots x_{3g-3}$) and we let
$B^o = B \backslash D$. Let $u$ be a geometric point of
$B^o$. Although the mapping class group is no longer meaningful over
$k$, we still have a monodromy representation
$\rho:\pi_1^{\mathrm{et}}(B^o, u) \to \aut H^1_{\mathrm{et}}(C_u,
\mb{Z}/N\mb{Z})$, where $(N, \car(k)) = 1$.

\begin{lemma} \label{l.mon} %
  Let $k$ be an algebraically closed field with $(N, \car(k)) = 1$. If
  the dual graph of $C_o$ is as in Lemma \ref{l.maxmon}, then the
  image of the monodromy representation $\rho$ is a free
  $\mb{Z}/N\mb{Z}$-module of rank $3g-3$.
\end{lemma}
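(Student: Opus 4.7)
The plan is to combine a description of the tame étale fundamental group of $B^o$ with the étale Picard--Lefschetz formula, and then reduce the statement to the linear-algebra computation already performed in the proof of Lemma \ref{l.maxmon}.

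First, since the order of the image of $\rho$ is prime to $\car(k)$, the representation factors through the tame fundamental group $\pi_1^{\mathrm{tame}}(B^o, u)$. Because $D = V(x_1 x_2 \cdots x_{3g-3}) \subset B$ is a strict normal crossings divisor on the regular strictly Henselian scheme $B$, Abhyankar's lemma (as invoked in the proof of Lemma \ref{l.abhyankar}) identifies $\pi_1^{\mathrm{tame}}(B^o, u)$ with $\prod_{i=1}^{3g-3} \widehat{\mathbb{Z}}'$, where $\widehat{\mathbb{Z}}'$ denotes the prime-to-$\car(k)$ completion of $\mathbb{Z}$; a topological generator $\delta_i$ of the $i$-th factor is a tame loop around the component $D_i$.

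Second, each $\delta_i$ acts on $H^1_{\mathrm{et}}(C_u, \mathbb{Z}/N\mathbb{Z})$ as the Picard--Lefschetz transvection
\[
\rho(\delta_i)(x) \;=\; x \pm \langle x, \gamma_i \rangle\, \gamma_i
\]
associated to the vanishing cycle $\gamma_i$ at the $i$-th node of $C_o$; this is the étale Picard--Lefschetz formula for curves (SGA~7, Exposé~XV), applied over a transverse trait to $D_i$. Using Poincaré duality, the nodes of $C_o = C_\Gamma$ are in bijection with the edges of the trivalent graph $\Gamma = \Gamma_P$ from Lemma \ref{l.maxmon}, and the étale vanishing cycles $\gamma_i$ are identified with the reductions modulo $N$ of the classes $A_i$ constructed there. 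Under this identification, $\rho(\delta_i)$ acts on $H^1_{\mathrm{et}}(C_u, \mathbb{Z}/N\mathbb{Z})$ by exactly the same formula as $h_P(t_{\gamma_i})$ acts on $H_1(\Sigma, \mathbb{Z}/N\mathbb{Z})$ in the proof of Lemma \ref{l.maxmon}. That proof exhibits a basis in which the matrices of these $3g-3$ transvections are manifestly $\mathbb{Z}/N\mathbb{Z}$-linearly independent, so the image of $\rho$ is a free $\mathbb{Z}/N\mathbb{Z}$-module of rank $3g-3$.

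The main obstacle is making the identification of étale vanishing cycles with the topological cycles $A_i$ fully rigorous when $\car(k) > 0$, since the description via shrinking spheres is a priori topological. The cleanest resolution is to lift $C_o$, together with its dual graph $\Gamma$, to a totally degenerate stable curve $\widetilde{C}_o$ over $W(k)$ (obtained by the same gluing of $\mathbb{P}^1$'s along marked points prescribed by $\Gamma$), form its versal deformation over $W(k) \ds{x_1, \dots, x_{3g-3}}$, and then apply smooth-proper base change together with tame specialization of fundamental groups to transport both the identification of $\pi_1^{\mathrm{tame}}$ and the Picard--Lefschetz description of monodromy from the geometric generic fibre, where the classical topological picture recalled in \S\ref{s.versal} applies directly, to the special fibre over $k$.
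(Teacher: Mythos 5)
Your proposal is correct and, despite the more explicit invocation of the \'etale Picard--Lefschetz formula and vanishing cycles in the first two paragraphs, the rigorous resolution you land on in the final paragraph (lift $C_o$ and its versal deformation to $W(k)$, then use tame specialization of $\pi_1$ to reduce to the complex-topological computation of Lemma~\ref{l.maxmon} via the discussion in \S\ref{s.versal}) is exactly the paper's argument. The paper compresses the Picard--Lefschetz content into the phrase ``the discussion in \S\ref{s.versal}'' and the comparison theorem for $k=\mathbb{C}$, but the underlying chain of reductions---complex case by comparison, then characteristic zero by Abhyankar, then positive characteristic by lifting over $W(k)$ and \cite[Exp.~XIII, Cor.~5.3]{SGA1}---is the same.
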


\begin{proof}
  If $k = \mb{C}$, then the comparison theorem for \'etale
  fundamental groups and \'etale cohomology shows that in this case the
  Lemma is implies by Lemma \ref{l.maxmon} and the discussion in \S
  \ref{s.versal}. This implies the lemma for $k$ of characteristic
  zero by Abhyankar's lemma.

  If $\car(k) > 0$ we let $W(k)$ be the ring of Witt vectors of $k$
  and let $\mc{B}$ be the versal deformation of $C_o$ over
  $W(k)$. So $\mc{B} = \spec W(k)[[x_2,x_2,\dots,x_{3g-3}]]$ and
  the universal family $\nu: \mc{C} \to \mc{B}$ is smooth over
  the complement $\mc{B}^o$ of a relative normal crossings divisor
  $\mc{D} \subset \mc{B}$. The lemma then follows from the
  case $\car(k) = 0$ using the structure of the tame fundamental group
  of $\mc{B}^o$ (\cite[Expose XIII, Corollaire 5.3]{SGA1}).
\end{proof}

\subsection{Incompressibility for congruence covers of 
\texorpdfstring{$\mathcal{M}_g$}{Mg}}
\label{s.congmg}

In this section $k$ is an arbitrary algebraically closed field.

\subsubsection{}

Let $\mathcal{M}_g$ be the moduli space of smooth projective curves of
genus $g \geq 2$ over $k$. Let $\ovl{\mathcal{M}}_g$ be the moduli
space of stable curves; this is an irreducible normal projective
variety. For an integer $N>2$ not divisible by $\car(k)$, let
$\mathcal{M}_g[N]$ be the moduli space of smooth curves with
(symplectic) level $N$-structure. Although $\mathcal{M}_g$ is not
smooth, $\mathcal{M}_g[N]$ is an irreducible and smooth variety. Let
$\ovl{\mathcal{M}}_g[N]$ be the normalisation of $\ovl{\mathcal{M}}_g$
in $\mathcal{M}_g[N]$; it is a normal projective variety but it is not
smooth.

\begin{lemma} \label{l.degsmooth} %
  Let $C_o$ be a totally degenerate curve of genus $g$ over $k$ whose
  dual graph $\Gamma$ is of the form described in Lemma
  \ref{l.maxmon}. If $N> 2$ then $\ovl{\mathcal{M}}_g[N]$ is smooth at
  all points lying above the point corresponding to $C_o$ in
  $\ovl{\mathcal{M}}_g$.
\end{lemma}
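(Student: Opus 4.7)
The plan is to show that the complete local ring $\widehat{\mc{O}}_{\ovl{\mc{M}}_g[N], \tilde{p}}$ is regular for each point $\tilde{p}$ lying over $[C_o]$. First I would make this ring concrete via deformation theory of nodal curves: the versal deformation of $C_o$ is $B = \spec k[[x_1,\dots,x_{3g-3}]]$, with each coordinate $x_i$ controlling the smoothing of the $i$-th of the $3g-3$ nodes of $C_o$, and the boundary $D = \{x_1\cdots x_{3g-3} = 0\}$ is simple normal crossings. Since $N \geq 3$ trivialises the automorphism groups of stable curves of genus $g \geq 2$ once a symplectic level $N$ structure is fixed, $\ovl{\mc{M}}_g[N]$ is a scheme near $\tilde{p}$, and $\widehat{\mc{O}}_{\ovl{\mc{M}}_g[N], \tilde{p}}$ can be identified with the normalisation of $B$ in a connected component of $\mc{M}_g[N] \times_{\mc{M}_g} B^o$, where $B^o = B \setminus D$.

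Next I would pin down this cover. It is the \'etale cover of $B^o$ corresponding to the composite monodromy map $\rho: \pi_1^{\mathrm{et}}(B^o, u) \to \Sp_{2g}(\mb{Z}/N)$, whose image $A$ is a free $\mb{Z}/N$-module of rank $3g-3$ by Lemma \ref{l.mon} (applied to $C_o$, whose dual graph is of the form given by Lemma \ref{l.maxmon}). Because $A$ is abelian of exponent $N$ and $(N, \car(k)) = 1$, $\rho$ factors through the tame abelianised fundamental group mod $N$, which is canonically $(\mb{Z}/N)^{3g-3}$ with one factor per boundary component $D_i$. Both groups having cardinality $N^{3g-3}$, the resulting surjection is forced to be an isomorphism.

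I would then conclude as follows. Up to a $\mb{Z}/N$-linear change of basis on $A$, the connected $A$-cover of $B^o$ corresponding to $\tilde{p}$ is the standard $(\mb{Z}/N)^{3g-3}$-Kummer cover, whose normalisation in $B$ is $\spec k[[y_1,\dots,y_{3g-3}]]$ with $y_i^N = x_i$: manifestly regular. Equivalently, Lemma \ref{l.abhyankar} identifies the normalisation as a simplicial toroidal singularity, and the rank equality between $A$ and $\pi_1^{\mathrm{tame},\mathrm{ab}}(B^o)/N$ forces the associated cone to be the full positive orthant, making the singularity smooth. The main subtlety is the identification in the first step: totally degenerate stable curves have large automorphism groups that act on the versal deformation, and one must verify that the $N \geq 3$ hypothesis cleanly eliminates them on passing to the level cover, so that no additional quotient by automorphisms enters the local description of $\ovl{\mc{M}}_g[N]$ at $\tilde{p}$.
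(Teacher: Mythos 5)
Your proposal has the right overall structure — compute $\widehat{\mathscr{O}}_{\ovl{\mathcal{M}}_g[N],\tilde p}$ as the normalisation $B'$ of $B = \spec k[[x_1,\dots,x_{3g-3}]]$ in a connected component of the level-$N$ cover, then use Lemma~\ref{l.mon} to recognise $B'$ as a Kummer cover $\spec k[[y_1,\dots,y_{3g-3}]]$ with $y_i^N = x_i$ and conclude regularity. Paragraphs two and three match the paper's proof essentially verbatim, and your remark that the toroidal language of Lemma~\ref{l.abhyankar} gives the same conclusion is a nice alternative phrasing.

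The gap is exactly the one you flag at the end, and it is not cosmetic. Your first paragraph asserts that ``$N\geq 3$ trivialises the automorphism groups of stable curves once a symplectic level $N$ structure is fixed,'' but there is no intrinsic notion of level $N$ structure on a \emph{stable} curve: $\ovl{\mathcal{M}}_g[N]$ is defined as a normalisation, not as a fine moduli space, and the rigidification statement you know for smooth curves of genus $\geq 2$ (an automorphism acting trivially on $H^1$ mod $N\geq 3$ is the identity) does not transfer to $C_o$, where $H^1_{\mathrm{et}}(C_o,\mb Z/N)$ has rank only $g$, not $2g$. Without this, you only know the completed local ring is a quotient $B'/\Stab_{\aut(C_o)}(B')$, and the whole point of the lemma — as the remark immediately following it makes clear — is to cope with graphs $\Gamma$ for which $\aut(\Gamma)$, hence $\aut(C_o)$, is nontrivial. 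The paper closes the gap by observing that $\aut(C_o)$ acts faithfully on $H^1_{\mathrm{et}}(C_o,\mb Z/N)$, identifies this group with the monodromy-invariants of $H^1_{\mathrm{et}}(C_u,\mb Z/N)$, and deduces from this that $\aut(C_o)$ acts with trivial stabilisers on the set of connected components of $B[N]$, so each $B'$ maps isomorphically onto the formal neighbourhood of its image in $\ovl{\mathcal{M}}_g[N]$. That faithfulness argument is the actual content here, and it is what your proposal defers rather than supplies.
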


\begin{proof}
  This follows immediately from \cite[Satz II]{mostafa}, at least if
  $\car(k) = 0$. We give a self-contained proof below that also works
  if $\car(k) > 0$.

  Let $B$, $C$, etc., be as in \S \ref{s.mon}. Let $B_0[N]$ be the
  $\mf{Sp}(2g, \mb{Z}/N\mb{Z})$-torsor over $B_0$ given by adding
  (symplectic) level $N$ structure and let $B[N]$ be the normalisation
  of $B$ in $B_0[N]$. Let $B'$ be any connected component of
  $B[N]$. It follows from Lemma \ref{l.mon} that the map $B'$ is given
  by extracting $N$-th roots of all the $x_i$, so
  $B' \cong \spec k[[y_1,y_2,\dots,y_{3g-3}]]$ with $y_i^N = x_i$. In
  particular, $B'$ is regular.

  The finite group $\aut(C_o)$ acts on all the objects above. It acts
  faithfully on $H^1_{\mathrm{et}}(C_o, \mb{Z}/ N\mb{Z})$ which is identified
  with the invariants of the monodromy on
  $H^1_{\mathrm{et}}(C_u, \mb{Z}/ N\mb{Z})$. It follows that $B'$ has trivial
  stabilizer in $\aut(C_o)$.

  We have a natural map $\iota:B_0[N] \to \mathcal{M}_g[N]$ since
  $\mathcal{M}_g[N]$ is a fine moduli space and this extends (by
  normality and the finiteness of the morphism $B[N] \to B$) to a
  morphism $\bar{\iota}: B[N] \to \ovl{\mathcal{M}}_g[N]$.  Since the
  map $B \to \ovl{\mathcal{M}}_g$ induces an isomorphism of
  $B/\aut(C_o)$ with the formal neighbourhood of
  $[C_0] \in \ovl{\mathcal{M}}_g$, the fact that $B'$ has trivial
  stabilizer in $\aut(C_o)$ implies that $\bar{\iota}$ identifies $B'$
  with the formal neighbourhood of
  $\bar{\iota}(o') \in \ovl{\mathcal{M}}_g[N]$, where $o'$ is the
  closed point of $B'$. The lemma follows from this since $B'$ is
  regular.

\end{proof}

\begin{remark}
  We only need Lemma \ref{l.degsmooth} for the $g$ for which there
  is no graph $\Gamma$ as in Lemma \ref{l.maxmon} such that
  $\aut(\Gamma)$ is trivial. It can be easily checked that such graphs
  exist for all $g \geq 7$.
\end{remark}

As above, we continue to assume that $\car(k) \nmid N$. Suppose $p$ is
a prime such that $p \nmid \car(k)N$. Then forgetting the level $p$
structure induces a finite \'etale morphism
$f: \mc{M}_g[pN] \to \mc{M}_g[N]$ making $ \mc{M}_g[pN]$ into an
$\Sp_{2g}(\mb{F}_p)$-torsor over $\mc{M}_g[N]$.

The following theorem gives a new proof of \cite[Corollary 4]{FKW19}
and also extends it to fields of positive characteristic.

\begin{theorem} \label{t.incompmg} %
  $ \mc{M}_g[pN]$ is $p$-incompressible as an
  $\Sp_{2g}(\mb{F}_p)$-torsor. Furthermore, for $g>2$, and any
  $n > 1$ such that $p \mid n$ and $\car(k) \nmid n$, the map
  $\mc{M}_g[n] \to \mc{M}_g$ is $p$-incompressible.
\end{theorem}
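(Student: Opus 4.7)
The plan for the first assertion is to apply Theorem~\ref{t.compress} with $X = \mc{M}_g[pN]$, $Y = \mc{M}_g[N]$, $G = \Sp_{2g}(\mb{F}_p) = \Gal(\mc{M}_g[pN]/\mc{M}_g[N])$, $r = 3g-3 = \dim \mc{M}_g$, and $A \subset G$ the elementary abelian $p$-subgroup constructed below. I build the required simplicial toroidal singularity $S$ starting from a totally degenerate stable curve $C_o$ of genus $g$ whose dual graph is of the type produced in Lemma~\ref{l.maxmon}. Its versal deformation has base $B = \Spec k[[x_1, \dots, x_{3g-3}]]$ with discriminant the coordinate hyperplanes; let $S := B'$ be any connected component of the normalization $B[N] \to B$ appearing in the proof of Lemma~\ref{l.degsmooth}. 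That proof gives $B' \cong \Spec k[[y_1, \dots, y_{3g-3}]]$ with $y_i^N = x_i$, so $S$ is formally smooth, hence a simplicial toroidal singularity, and the map $B' \to \ovl{\mc{M}}_g[N]$ constructed there restricts to $g \colon (B')^o \to \mc{M}_g[N]$.

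The monodromy condition is checked as follows. By Lemma~\ref{l.mon}, the image of $\pi_1^{\mathrm{et}}(B^o) \to \Sp_{2g}(\mb{Z}/pN)$ is the free $(\mb{Z}/pN)$-module $(\mb{Z}/pN)^{3g-3}$ generated by Dehn twists around the boundary components. Since $\gcd(p, N) = 1$, the canonical splitting $\Sp_{2g}(\mb{Z}/pN) \cong \Sp_{2g}(\mb{F}_p) \times \Sp_{2g}(\mb{Z}/N)$ applies, and $\pi_1^{\mathrm{et}}((B')^o)$ is precisely the preimage in $\pi_1^{\mathrm{et}}(B^o)$ of the $\Sp_{2g}(\mb{Z}/N)$ factor. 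Hence the image of $\pi_1^{\mathrm{et}}((B')^o)$ in the kernel $G = \Sp_{2g}(\mb{F}_p)$ of the projection is the elementary abelian subgroup $A = (\mb{Z}/p)^{3g-3}$, of rank $r$. For the smoothness hypothesis of Theorem~\ref{t.compress}, I take $\ovl X = \ovl{\mc{M}}_g[pN]$: since $pN > 2$, Lemma~\ref{l.degsmooth} gives smoothness of $\ovl X$ at every point above $[C_o]$, and (after shrinking to an open neighborhood of those points) this yields the required smooth partial compactification; the extension $\bar f \colon \ovl X \to \ovl{\mc{M}}_g[N]$ is finite, hence proper. Theorem~\ref{t.compress} then gives $\ed_G(\mc{M}_g[pN]; p) \geq r = \dim \mc{M}_g[pN]$, which is the desired $p$-incompressibility.

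For the second assertion, the plan is to run the same argument, now applied to $\mc{M}_g[n] \to \mc{M}_g$ with $\ovl Y = \ovl{\mc{M}}_g$ and $\ovl X = \ovl{\mc{M}}_g[n]$. Writing $n = p^a m$ with $\gcd(m, p) = 1$, one takes $B'$ to be a connected component of the normalization of $B$ obtained by adjoining $(n/p)$-th roots of each $x_i$; the analogous computation then identifies the image of $\pi_1^{\mathrm{et}}((B')^o)$ in $(\mb{Z}/n)^{3g-3} \subset \Gal(\mc{M}_g[n]/\mc{M}_g)$ with the unique order-$p$ subgroup of each cyclic factor, yielding the elementary abelian subgroup $A = (\mb{Z}/p)^{3g-3}$ of rank $3g-3 = \dim \mc{M}_g$. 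The hypothesis $g > 2$ is used to ensure that a generic genus-$g$ curve has trivial automorphism group, so that $\mc{M}_g[n] \to \mc{M}_g$ is generically a Galois cover with the expected group (and that Lemma~\ref{l.degsmooth} continues to apply); this is where the argument fails in genus two because of the hyperelliptic involution. I expect the main technical obstacle in completing the proof to be the bookkeeping around the stack-vs-scheme distinction for $\mc{M}_g$ and for small values of $n$; the monodromy ingredient from Lemmas~\ref{l.maxmon} and~\ref{l.mon}, together with the smoothness from Lemma~\ref{l.degsmooth}, remains the heart of the argument.
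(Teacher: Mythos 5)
Your argument for the first assertion tracks the paper's proof closely: you pick the same totally degenerate $C_o$, the same toroidal germ $S = B'$ obtained by adjoining $N$-th roots, the same monodromy input from Lemma~\ref{l.mon}, and the same use of Lemma~\ref{l.degsmooth} to get smoothness near the points above $[C_o]$. One small point needs tightening: to get a proper $\bar f$ with smooth source you must shrink \emph{both} $\ovl{\mc{M}}_g[pN]$ and $\ovl{\mc{M}}_g[N]$ compatibly — take a nonsingular open $V \subset \ovl{\mc{M}}_g[N]$ containing $\mc{M}_g[N]$ and the image of $[C_o]$, and set $U = \bar f^{-1}(V)$ inside the smooth locus; your phrase ``after shrinking to an open neighborhood of those points'' glosses over the need for $U \supset X$ and the compatibility with $V$, but this is a presentational matter, not a conceptual one.

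Your treatment of the second assertion, however, has a genuine gap. Theorem~\ref{t.compress} requires $X$ smooth with a \emph{free} $G$-action and $Y = X/G$; taking $Y = \mc{M}_g$ directly violates this, since the quotient map $\mc{M}_g[n] \to \mc{M}_g$ is not a torsor (automorphisms of curves cause ramification, $\mc{M}_g$ has quotient singularities, and indeed your chosen boundary point $[C_o]$ has a large automorphism group). You flag this as ``bookkeeping around the stack-vs-scheme distinction'' but do not supply the fix, and it is not merely bookkeeping: as written, the hypotheses of Theorem~\ref{t.compress} simply fail. The paper avoids all of this by a short reduction to the first part: pick any $N > 2$ with $(N, p\,\car k)=1$ and use that $\mc{M}_g[N] \to \mc{M}_g$ has degree prime to $p$; since $g>2$ means the generic curve has trivial automorphisms, $\mc{M}_g[p] \times_{\mc{M}_g} \mc{M}_g[N]$ agrees generically with $\mc{M}_g[pN]$, so the degree-prime-to-$p$ base change in the definition of $\ed(\cdot\,;p)$ reduces the claim to the $p$-incompressibility of $\mc{M}_g[pN] \to \mc{M}_g[N]$ already proved. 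That reduction — rather than a direct application of Theorem~\ref{t.compress} over $\mc{M}_g$ — is the missing ingredient in your proposal.
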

  
\begin{proof}
  Let $C_o$ be a totally degenerate stable curve of genus $g$ with
  dual graph as in Lemma \ref{l.maxmon}.  Let $B$ be the versal
  deformation space of $C_o$ as in \S \ref{s.mon}. Let $S^0$ be a
  connected component of the \'etale cover of $B^o$ given by
  trivialising the finite local system $R^1 \nu_* (\mb{Z}/N\mb{Z})$
  and let $S$ be the normalisation of $B$ in $S^0$. Then $S$ is a
  simplicial toroidal singularity since $\car(k) \nmid N$; it is in
  fact smooth, but we will not need this. The pullback of $C$ to $S^0$
  induces a morphism $g:S^0 \to \mc{M}_g[N]$ and since $p \nmid N$, by
  Lemma \ref{l.mon} for any geometric point $s$ of $S^0$ the image of
  the composite
  $\pi_1^{\mathrm{et}}(S^0, s) \stackrel{g_*}{\longrightarrow}
  \pi_1^{\mathrm{et}}(\mathcal{M}_g[N], g(s)) \twoheadrightarrow
  \Sp_{2g}(\mb{F}_p)$ is an elementary abelian $p$-group of rank
  $3g-3 = \dim(\mathcal{M}_g[N])$.

  By Lemma \ref{l.degsmooth} there exists a nonsingular Zariski open subset
  $U$ (resp.~$V$) of $\ovl{\mathcal{M}}_g[pN]$
  (resp.~$\ovl{\mathcal{M}}_g[N]$) containing $\mc{M}_g[pN]$
  (resp.~$\mc{M}_g[N]$) and all the points parametrising curves
  isomorphic to $C_o$ such that the map
  $f: \mc{M}_g[pN] \to \mc{M}_g[N]$ extends to a finite morphism
  $\bar{f}: U \to V$. We now set $X = \mathcal{M}_g[pN]$,
  $Y = \mathcal{M}_g[N]$, $\ovl{X} = U$, $\ovl{Y} = V$ and
  $f:X \to Y$, $\bar{f}: \ovl{X} \to \ovl{Y}$ as above. The first part
  of the theorem will follow from Theorem \ref{t.compress} if we can
  show that the map $g:S^0 \to Y$ extends to a morphism
  $\bar{g}: S \to \ovl{Y}$

  To see this, we note that the universal family of stable curves over
  $B$ induces a morphism $h: B \to \ovl{\mathcal{M}}_g$. Let
  $B' = B \times_{\ovl{\mathcal{M}}_g} \ovl{\mathcal{M}}_g[N]$. $B'$ is finite
  over $B$ and the map $g:S^0 \to Y$ clearly factors through
  $B'$. Since $S$ is normal and finite over $B$, the map $S^0 \to B'$
  extends to a morphism $S \to B'$ which gives an extension of $g$ as
  a map $g': S \to \ovl{\mathcal{M}}_g[N]$. By construction, the
  closed point of $S$ maps to a point in $\ovl{X}$, so it follows that
  $g$ extends to a morphism $\bar{g}: S \to \ovl{Y}$ as desired.

  To prove the second statement, we may assume that $n=p$. Choose any
  integer $N> 2$ such that $(N, p\car(k)) = 1$. Since $g >2$, the
  generic curve of genus $g$ has no nontrivial automorphisms, so $\mc{M}_g[p]
  \times_{\mc{M}_g}\mc{M}_g[N]$ is equal to $\mc{M}_g[pN]$ over the
  generic point of $\mc{M}_g[N]$. We complete the proof by applying
  the first part of the theorem.
\end{proof}

\begin{remark} \label{r.mg}%
  The second part of Theorem \ref{t.incompmg} for
  $g=2$ follows from the corresponding statement for $\mc{A}_g$ proved
  in Corollary \ref{cor.main} and Theorem \ref{c.agp}.
\end{remark}

\subsection{Incompressibility for some ``quantum'' covers of 
\texorpdfstring{$\mathcal{M}_g$}{Mg}}
\label{s.quantmg}

We now apply Proposition~\ref{p.criterion} to prove incompressibility
for certain covers of moduli spaces of curves arising from certain
($3$-dimensional) Topological Quantum Field Theories. Although our
method is quite general, for the sake of concreteness we restrict
ourselves to the TQFTs constructed explicitly in \cite{bhmv}.

Throughout this section the base field is $\mathbb{C}$.

\subsubsection{}
We first recall some of the properties of the TQFTs that we shall
need.  Let $g \geq 2$ and let $\Sigma$ be a closed oriented surface of
genus $g$. The three dimensional TQFTs constructed in \cite{bhmv},
which are indexed by integers $p \geq 1$, give rise to a projective
representation $\rho_p$ of the mapping class group $\Mod(\Sigma)$
in a vector space $V_p(\Sigma)$ (over the cyclotomic field
$\mb{Q}(\mathrm{exp}({\tfrac{2\pi i}{4p}}))$).

We now assume that $p$ is odd for simplicity: this is not a
significant restriction since the $V_p(\Sigma)$ for even $p$ can be
expressed as a tensor product of $V_p(\Sigma)$ for odd $p$ and another
simple representation (depending only on $g$).  For each $P$ as in
\S\ref{s.pants} there is a natural basis $\{u_{\sigma}\}$ of
$V_p(\Sigma)$ in which the action of $F_P(\Sigma)$ is diagonalised
\cite[Theorem 4.11]{bhmv}. The basis is parametrised by
\emph{admissible colorings} of $\Gamma_P$ \cite[Definition 4.5]{bhmv}
which are maps
\[
  \sigma:E(\Gamma_P) \to \{0,2,\dots, p-2\}
\]
satisfying the following admissibility condition: for all vertices $v$
of $\Gamma_P$, if $e_1$, $e_2$ and $e_3$ are the three edges incident
on $v$ (for a loop the same edge is repeated twice) then
\begin{equation} \label{e.triang}
  |\sigma(e_1) - \sigma(e_2)| \leq \sigma(e_3) \leq \sigma(e_1) +
  \sigma(e_2) .
\end{equation}
The Dehn twist corresponding to an edge $\gamma$ acts on $u_{\sigma}$
by multiplication by a $p$-th root of unity depending only on the
colour $\sigma(e_{\gamma})$: if $A= \mathrm{exp}(\tfrac{2\pi i}{4p})$
then for any colour $i$ this is $(-1)^iA^{i^2 + 2i}$ \cite[\S 5.8,
Remark 7.6 (ii)]{bhmv}. One sees from the formula that if $p$ is an
odd prime, then these roots of unity are distinct $p$-th roots of
unity for distinct even colours.

\begin{lemma} \label{l.rank} 
  The groups $\rho_p(F_P(\Sigma))$ are elementary abelian $p$-groups
  of rank $3g-3$ if $p>5$ is a prime.
\end{lemma}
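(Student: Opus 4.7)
The plan is to exploit the fact that $F_P(\Sigma)$ acts diagonally on $V_p(\Sigma)$ in the basis $\{u_\sigma\}$ of \cite[Theorem 4.11]{bhmv}: the Dehn twist $t_{\gamma_j}$ multiplies $u_\sigma$ by the scalar $\mu_{j,\sigma}=(-1)^{\sigma(e_j)}A^{\sigma(e_j)^2+2\sigma(e_j)}$. First I would verify the exponent. Since $\sigma(e_j)$ is always even, the sign $(-1)^{\sigma(e_j)}$ is $+1$ and $\sigma(e_j)(\sigma(e_j)+2)$ is divisible by $4$, so $\mu_{j,\sigma}$ is a power of $\zeta := A^4$, a primitive $p$-th root of unity. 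In particular $\rho_p(t_{\gamma_j})^p$ is the identity in $GL(V_p(\Sigma))$, and combined with commutativity of $F_P(\Sigma)$ this shows that $\rho_p(F_P(\Sigma))$ is an elementary abelian $p$-group of rank at most $3g-3$.

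The main content is the matching lower bound, i.e., injectivity of the induced map $\Phi\colon \mathbb{F}_p^{3g-3}\to PGL(V_p(\Sigma))$. An element $\prod_j\rho_p(t_{\gamma_j})^{n_j}$ is projectively trivial iff the diagonal matrix with entries $\prod_j\mu_{j,\sigma}^{n_j}=\zeta^{\sum_j n_j\,b_{\sigma(e_j)}}$, where $b_c := c(c+2)/4 \pmod p$, is scalar, i.e., iff $\sum_j n_j\,b_{\sigma(e_j)}$ is constant in $\sigma$. Specializing to $\sigma\equiv 0$ shows that this constant is $0$, so injectivity reduces to showing that $\sum_j n_j\,b_{\sigma(e_j)}\equiv 0\pmod p$ for every admissible $\sigma$ forces all $n_j$ to vanish.

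To handle this, I would test against only the ``two-color'' colorings $\sigma_{S,c}$ taking value $c$ on a subset $S\subseteq E(\Gamma_P)$ and $0$ elsewhere, for a fixed even $c\in\{2,\dots,p-3\}$ (such a $c$ exists because $p>5$). The triangle inequality for $\sigma_{S,c}$ at a vertex $v$ unpacks to the combinatorial condition that the number of edges of $S$ incident to $v$ (counting a loop twice) lies in $\{0,2,3\}$; call such $S$ \emph{admissible}. Since $b_c=(c/2)(c/2+1)\not\equiv 0\pmod p$ (both factors lie in $\{1,\dots,(p-1)/2\}$), the hypothesis reduces to the family of linear constraints $\sum_{j\in S}n_j\equiv 0\pmod p$ indexed by admissible subsets $S$.

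The combinatorial endgame is then elementary. The full edge set $E(\Gamma_P)$ is admissible, and for any non-loop edge $e$ the set $E(\Gamma_P)\setminus\{e\}$ is also admissible (its two endpoints drop from degree $3$ to $2$), so subtracting the two corresponding relations gives $n_e=0$. For a loop edge $e$ at a vertex $v$, trivalence forces $v$ to have a unique non-loop edge $e'$, and both $E(\Gamma_P)\setminus\{e'\}$ and $E(\Gamma_P)\setminus\{e,e'\}$ are admissible (the latter has degree $0$ at $v$); their difference again yields $n_e=0$. The only step where I would expect any trouble --- producing enough admissible subsets to separate coordinates for arbitrary pants decompositions --- thus dissolves into this observation about degrees in $3$-regular multigraphs, and the rest is bookkeeping with the explicit eigenvalue formula from \cite{bhmv}.
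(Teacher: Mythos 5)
Your proof is correct, and it takes a genuinely different (and arguably cleaner) route than the paper's. The paper separates the generators $t_\gamma$ one at a time: for each $\gamma$ it builds a tailor-made admissible coloring $\sigma_\gamma$ taking values in $\{0,2,4\}$ (color $2$ on $e_\gamma$ and $0$ elsewhere when $\gamma$ is a loop, color $4$ on $e_\gamma$ and $2$ elsewhere when $\gamma$ is not a loop), and then compares eigenvalues of $\prod t_\gamma^{a_\gamma}$ on the two-dimensional subspace spanned by $u_{\sigma_\gamma}$ and a constant coloring ($u_{\sigma_0}$ or $u_{\sigma_2}$) to force $p \mid a_\gamma$; the appearance of color $4$ and the need for $\zeta^6\neq 1$ is exactly why the paper assumes $p>5$. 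You instead pass everything through a single family of two-color colorings $\sigma_{S,c}$ indexed by subsets $S$ of edges, identify admissibility with the elementary degree condition (number of $S$-ends at each vertex in $\{0,2,3\}$), and then extract the constraints $n_e\equiv 0$ by subtracting the relations from $E$, $E\setminus\{e\}$, and for loops $E\setminus\{e'\}$ versus $E\setminus\{e,e'\}$. This is more systematic, and as a side effect only uses color $2$: so your argument in fact covers $p=5$ as well, and your parenthetical ``such a $c$ exists because $p>5$'' is slightly off (one only needs $p\geq 5$; the paper's stronger hypothesis comes from its use of color $4$, not from yours). Both proofs are correct and elementary; the paper's is a little shorter to write down, while yours makes the combinatorial content (degrees in trivalent multigraphs) more transparent.
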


\begin{proof}
  
  Let $\gamma \in P$. If $\gamma$ is a loop, let $\sigma_{\gamma}$ be
  the coloring of $\Gamma_P$ given by
\begin{equation}
  \sigma_{\gamma}(e_{\gamma'}) = \begin{cases}
    2 \mbox{ if } \gamma' = \gamma \\
    0 \mbox{ if } \gamma' \neq \gamma .
  \end{cases}
\end{equation}
If $\gamma$ is not a loop let $\sigma_{\gamma}$ be the coloring of
$\Gamma_P$ given by
\begin{equation}
  \sigma_{\gamma}(e_{\gamma'}) = \begin{cases}
    4 \mbox{ if } \gamma' = \gamma \\
    2 \mbox{ if } \gamma' \neq \gamma .
  \end{cases}
\end{equation}
One easily sees that $\sigma_{\gamma}$ is an admissible coloring in
both cases. Moreover, the Dehn twist $t_{\gamma}$ around $\gamma$ acts
by a non-trivial $p$-th root of unity on $u_{\sigma_{\gamma}}$ in both
cases, whereas the other Dehn twists $t_{\gamma'}$ all act by a
different root of unity independent if $\gamma \neq \gamma'$.

Let $\sigma_0$ (resp.~$\sigma_2$) be the coloring which is the
constant function $0$ (resp.~$2$). Suppose
$\prod_{\gamma \in P}t_{\gamma}^{a_{\gamma}}$ is in
$\ker(\rho_p(F_P(\Sigma)))$. If $\gamma$ is a loop then by considering
the restriction of the representation to the subspace spanned by
$u_{\sigma_0}$ and $u_{\sigma_{\gamma}}$ we see immediately that
$a_{\gamma}$ must be a multiple of $p$. If $\gamma$ is not a loop we
get the same conclusion by using the subspace spanned by
$u_{\sigma_2}$ and $u_{\sigma_{\gamma}}$. Thus, $\rho_p(F_P(\Sigma))$
is is an elementary abelian $p$-group of rank equal to $\# P = 3g-3$.
Since $F_P(\Sigma)$ acts trivially on $u_{\sigma_0}$ we conclude that
the same holds for the projective image in  $PGL(V_p(\Sigma))$.
\end{proof}

\subsubsection{}

Since $\Mod(\Sigma)$ is finitely generated, the representations
$\rho_p$ (for any $p \geq 1$) can be reduced modulo all but finitely
many primes $q$ of the subring of $\mb{C}$ generated by the matrix
coefficients, giving rise to an infinite sequence of finite quotients
$G^{p,q}$ of $\Mod(\Sigma)$.  In general, these quotients might depend
on the choice of lattice used to define the reduction of the
representations, but for $p$ an odd prime the explicit lattices in
$V_p(\Sigma)$ invariant under the action of $\Mod(\Sigma)$ that were
constructed in \cite{gil-mas} can be used to uniquely specify the
$G^{p,q}$ (for all $q$).

The group $\Mod(\Sigma)$ can be identified (after choosing a base
point) with the fundamental group of the moduli \emph{stack}
$\mathbf{M}_g$ of smooth projective curves of genus $g$ over
$\mb{C}$. Therefore, the finite quotients $G^{p,q}$ of $\Mod(\Sigma)$
give rise to \'etale covers of the stack $\mathbf{M}_g$, which we
denote by $\mathbf{M}_g^{p,q}$.

\begin{lemma} \label{l.disjoint} Suppose $p> 3$ is a prime and
  $n\neq p$ is also a prime. Let $\Gamma^{p,q}$ be the kernel of the
  surjection $\Mod(\Sigma) \to G^{p,q}$ and let $\Gamma_n$ be the
  kernel of the surjection
  $\Mod(\Sigma) \to \Aut(H_1(\Sigma, \mb{Z}/n\mb{Z}))$. Then
  $\Gamma^{p,q} \cdot \Gamma_n = \Mod(\Sigma)$.
\end{lemma}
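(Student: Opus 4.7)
The plan is to show the normal subgroup $N := \Gamma^{p,q} \cdot \Gamma_n$ of $\Mod(\Sigma)$ is all of $\Mod(\Sigma)$ by exhibiting enough Dehn twists inside $N$ to generate the mapping class group. I would work with the specific pants decomposition $P$ of $\Sigma$ constructed in the proof of Lemma \ref{l.maxmon}, whose dual graph $\Gamma_P$ contains the outer $g$-gon. Let $F := F_P(\Sigma) \cong \mathbb{Z}^{3g-3}$ be the associated free abelian group of Dehn twists, and set $K_p := F \cap \Gamma^{p,q}$ and $K_n := F \cap \Gamma_n$; these are subgroups of the abelian group $F$, so their product in $F$ is simply their sum.

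First I would show $K_p = pF$. By Lemma \ref{l.rank}, the projective quantum image of $F$ (in characteristic zero) is elementary abelian of order $p^{3g-3}$; since the Dehn twists act diagonally in the natural basis $\{u_\sigma\}$ with eigenvalues which are non-trivial $p$-th roots of unity, and since $q \neq p$, these roots remain distinct after reduction modulo $q$. Hence the image of $F$ in $G^{p,q}$ is still an elementary abelian $p$-group of rank $3g-3$, so $F/K_p \cong (\mathbb{Z}/p)^{3g-3}$, i.e., $K_p = pF$. Similarly, Lemma \ref{l.maxmon} gives that the image of $F$ in $\Aut(H_1(\Sigma, \mathbb{Z}/n))$ is free of rank $3g-3$ over $\mathbb{Z}/n$, so $K_n = nF$. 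Since $\gcd(p,n) = 1$, we get $pF + nF = F$, hence $F \subseteq \Gamma^{p,q} \cdot \Gamma_n = N$.

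Finally I would use that the pants decomposition $P$ contains non-separating simple closed curves: removing an outer edge $e_i$ ($1 \leq i \leq g$) from $\Gamma_P$ leaves the graph connected (the outer polygon becomes a path, and the interior structure is preserved), so the corresponding curve $\gamma_i$ is non-separating. The Dehn twist $t_{\gamma_i}$ lies in $F \subseteq N$. By the change-of-coordinates principle, all non-separating simple closed curves on $\Sigma$ lie in a single $\Mod(\Sigma)$-orbit, so their Dehn twists are pairwise conjugate; since $N$ is normal and contains one such Dehn twist, it contains them all. As $\Mod(\Sigma)$ is generated by Dehn twists around non-separating simple closed curves (a classical fact, e.g., via the Humphries generators in Farb--Margalit), we conclude $N = \Mod(\Sigma)$.

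The main subtle point is the passage from the characteristic-zero quantum representation $\rho_p$ (where Lemma \ref{l.rank} is stated) to the finite quotient $G^{p,q}$: one must verify that the rank does not drop modulo $q$. This follows from the explicit eigenvalue description, since distinct non-trivial $p$-th roots of unity remain distinct in any characteristic other than $p$. Everything else is essentially the Chinese Remainder Theorem in the abelian group $F$, combined with the normal generation of $\Mod(\Sigma)$ by a single non-separating Dehn twist.
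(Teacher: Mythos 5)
Your proof is correct, but it takes a genuinely different route from the paper's. The paper observes that $\delta^p \in \Gamma^{p,q}$ for a \emph{single} non-separating Dehn twist $\delta$ in $P$ (this needs only that $\rho_p(\delta)$ has order $p$ in characteristic zero, so $\rho_p(\delta^p)=1$ persists after any reduction), and then that the image of $\Gamma^{p,q}$ in $\Aut(H_1(\Sigma,\mathbb{Z}/n)) \cong \Sp_{2g}(\mathbb{Z}/n)$ is a normal subgroup containing a nontrivial power of a transvection; since $\mathrm{PSp}_{2g}(\mathbb{F}_n)$ is simple (for $g \geq 2$, with the usual small exceptions handled ad hoc), this forces surjectivity. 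Your argument instead pushes the whole abelian group $F = F_P(\Sigma)$ into $N = \Gamma^{p,q}\cdot\Gamma_n$ via the Chinese Remainder Theorem, and then uses that $\Mod(\Sigma)$ is normally generated by a single non-separating Dehn twist. Both routes lean on a deep classical input of comparable weight --- simplicity of $\mathrm{PSp}$ in the paper's case, Lickorish/Humphries generation of $\Mod(\Sigma)$ in yours --- and both are valid.

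One remark: what you flag as the ``main subtle point,'' namely that the rank of the image of $F$ in $G^{p,q}$ is still $3g-3$ (i.e., $K_p = pF$ exactly), is more than your argument actually requires. You only need the inclusion $pF \subseteq K_p$, together with $nF \subseteq K_n$, to conclude $F = pF + nF \subseteq K_p + K_n \subseteq N$. The first inclusion is immediate from the fact that each Dehn twist in $P$ has order dividing $p$ under $\rho_p$ already in characteristic zero, so $\rho_p(t_\gamma^p) = 1$ survives \emph{any} reduction; no analysis of whether eigenvalue ratios stay distinct modulo $q$ is needed. Similarly $nF \subseteq K_n$ follows because every $t_\gamma$ acts on $H_1(\Sigma,\mathbb{Z}/n)$ by a transvection (or trivially), and transvections have order $n$. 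So the proof can be streamlined by dropping the exact computation of $K_p$.
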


\begin{proof}
  From the proof of Lemma \ref{l.rank} we see that the Dehn twists
  $\delta$ around any of the loops $\gamma$ maps to an element of
  order $p$ under $\rho_p$, so $\delta^p \in \Gamma^{p,q}$. Since
  $p\neq n$, if $\gamma$ is a non-separating loop then $\delta^p$ is
  an element of exact order $n$ in
  $\Aut(H_1(\Sigma, \mb{Z}/n\mb{Z}))$. Since the only normal
  subgroup of this group is its centre, it follows that $\Gamma^{p,q}$
  surjects onto $\Aut(H_1(\Sigma, \mb{Z}/n\mb{Z}))$.
\end{proof}

For any integer $N > 2$, let $\mathbf{M}_g[N]$ be the moduli stack of
smooth projective curves of genus $g$ with level $n$ structure. It is
in fact equal to its coarse moduli space, the variety
$\mathcal{M}_g[N]$ from \S \ref{s.congmg}.  We set
$\mc{M}_g^{p,q}[N]:= \mathbf{M}_g^{p,q}
\times_{\mathbf{M}_g}\mc{M}_g[N]$. By Lemma \ref{l.disjoint},
$\mathcal{M}_g^{p,q}[N]$ is irreducible and the projection map
$\pi_g^{p,q}:\mathcal{M}_g^{p,q}[N] \to \mathcal{M}_g[N]$ is a finite
etale $G^{p,q}$-cover.

\begin{theorem} \label{t.quantum} The $G^{p,q}$-covers
  $\pi_g^{p,q}:\mathcal{M}_g^{p,q}[N] \to \mathcal{M}_g[N]$ are
  $p$-incompressible for $p>5$, $N> 2$ and $p \nmid N$ (and all but
  finitely many primes $q$ of the relevant cyclotomic field).
\end{theorem}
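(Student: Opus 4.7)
The strategy is to apply Theorem \ref{t.compress} with $X = \mc{M}_g^{p,q}[N]$, $Y = \mc{M}_g[N]$, $G = G^{p,q}$, and with an elementary abelian $p$-subgroup $A \subset G$ of rank $3g-3$ furnished by the local monodromy of $\pi_g^{p,q}$ around the versal neighbourhood of a suitable totally degenerate curve. Since we work over $\mb{C}$, the existence of a smooth partial compactification $\ovl{X} \supset X$ with a proper morphism to some $\ovl{Y} \supset Y$ extending $\pi_g^{p,q}$ is automatic by the remark following Theorem \ref{t.compress}.

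Pick a totally degenerate stable curve $C_o$ of genus $g$ whose dual graph is of the type produced by Lemma \ref{l.maxmon}, and let $\nu:C \to B$ be its versal deformation over $\mb{C}$, with $B^o$ the complement of the boundary normal crossings divisor. By \S\ref{s.versal}, the topological monodromy $\pi_1(B^o) \to \Mod(C_u)$ has image the free abelian group $F_P(C_u)$ of rank $3g-3$ spanned by Dehn twists around the vanishing cycles. Composing with $\Mod(C_u) \twoheadrightarrow G^{p,q}$ gives a subgroup $A \subset G^{p,q}$. I would check that for all but finitely many $q$ the group $A$ is elementary abelian of rank $3g-3$: the proof of Lemma \ref{l.rank} relies only on the explicit diagonalisation of $F_P(C_u)$ on the basis $\{u_\sigma\}$ and on the linear independence of $u_{\sigma_0}, u_{\sigma_2}, u_{\sigma_\gamma}$; this persists after reduction modulo any prime $q$ not lying above $p$ and outside a further finite set of primes controlling the BHMV integral lattices of \cite{gil-mas}.

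Next construct the toroidal datum. Let $S^o$ be a connected component of the \'etale cover $B^o \times_{\mathbf{M}_g} \mc{M}_g[N]$ of $B^o$ (\'etale since $p \nmid N$ and $\car(\mb{C}) = 0$), and let $S$ be the normalisation of $B$ in $S^o$. By the computation used in the proof of Lemma \ref{l.degsmooth} together with Lemma \ref{l.abhyankar}, $S$ is a (smooth) simplicial toroidal singularity, and the universal family yields $g:S^o \to \mc{M}_g[N]$. Since $(N,p) = 1$, the subgroup $\pi_1(S^o) = N \cdot F_P(C_u) \subset F_P(C_u)$ surjects onto $A \cong F_P(C_u)/p F_P(C_u)$ because multiplication by $N$ is a unit modulo $p$; thus the monodromy image of $g$ in $G^{p,q}$ is exactly $A$.

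The extension $\bar{g}:S \to \ovl{\mc{M}}_g[N]$ is produced exactly as in the proof of Theorem \ref{t.incompmg}: the universal family on $B$ gives a morphism $B \to \ovl{\mc{M}}_g$, and since $S$ is normal and finite over $B$ the induced rational map to $\ovl{\mc{M}}_g[N]$ extends to a morphism. Theorem \ref{t.compress} now yields $\ed(\mc{M}_g^{p,q}[N];p) \geq 3g-3 = \dim \mc{M}_g[N]$, proving $p$-incompressibility. The main obstacle I anticipate is the second step above: controlling the reduction of the BHMV representation so that the elementary abelian statement of Lemma \ref{l.rank} in $PGL(V_p)$ descends to the finite quotient $G^{p,q}$. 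Once the required $p$-th roots of unity remain distinct and non-trivial and the relevant basis vectors remain linearly independent modulo $q$, which both fail on at most a finite set of primes of $\mb{Z}[\exp(2\pi i/4p)]$, the rest of the argument is completely parallel to the congruence-cover case.
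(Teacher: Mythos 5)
Your proposal is correct and follows the paper's proof essentially step by step (versal deformation of a totally degenerate curve, full-rank elementary abelian image of $F_P(\Sigma)$ in $G^{p,q}$ via Lemma~\ref{l.rank}, passage to the prime-to-$p$-index cover $S^o$ coming from the level $N$ structure, then Theorem~\ref{t.compress}). The only minor divergence is that you insist on a dual graph of Lemma~\ref{l.maxmon} type, which is unnecessary for this theorem: the paper uses an arbitrary totally degenerate $C_o$ and simply observes that $S^o\to B^o$ is abelian of exponent $N$, so $\pi_1(S^o)$ has index prime to $p$ in $\pi_1(B^o)=F_P(\Sigma)$ and its image in $G^{p,q}$ is still the full rank $3g-3$ elementary abelian group.
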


\begin{remark} \label{r.g>2}
  $ $
  \begin{enumerate}
  \item If $g>2$ then $\mc{M}_g$ is generically a variety, so we
    have a finite $G^{p,q}$-cover $\pi_g^{p,q}:\mathcal{M}_g^{p,q} \to \mathcal{M}_g$
    (which is only generically \'etale). Theorem \ref{t.quantum} implies
    that this cover is generically incompressible.
  \item If $g>3$ there exists a totally degenerate stable curve with
    trivial automorphism group. In this case the proof below may be
    carried out without the level $N$-structure.
    \end{enumerate}
  \end{remark}

\begin{proof}
  Let $C_o$ be a totally degenerate stable curve of genus $g$, and let
  $\nu:C \to B$, $D$, $B$, etc. be as in \S \ref{s.versal}.  By the
  discussion there and Lemma \ref{l.rank}, the image in $G^{p,q}$ of
  $F_P(\Sigma)$ (identified with $\pi_1(B^o,b)$) is an elementary
  abelian $p$-group of rank $3g-3$. On the other hand, the image of
  the monodromy representation in $\Aut(H_1(C_b, \mb{Z}/N\mb{Z}))$ is
  a finite abelian group of exponent $N$ (since the mondromy around
  each irreducible component of $D$ is unipotent).

  Let $S^o$ be a connected component of
  $B^o \times_{\mathbf{M}_g} \mathcal{M}_g[N]$, where the map to $\mathbf{M}_g$
  is the classifying map. The map $S^o \to B^o$ is an abelian cover of
  exponent $N$. Let $S$ be the normalisation of $B$ in $S^o$, so this
  is a (simplicial) toroidal singularity and $S^o \subset {S}$ is the
  complement of the boundary divisor. Since $p \nmid N$, for
  $s \in S^o$ the image of the monodromy representation
  $\pi_1(S^o,s) \to \pi_1(\mathcal{M}_g[N]) \to G^{p,q}$ is still an
  elementary abelian $p$-group of rank $3g-3$.

  We conclude by applying Theorem \ref{t.compress} with
  $Y = \mathcal{M}_g[N]$, $X = \mathcal{M}_g^{p,q}[N]$,
  $f =\pi_q^{p,q}$, $\ovl{Y}= \overline{\mc{M}}_g[N]$, $\ovl{X}$ any
  equivariant smooth compactification of $ \mathcal{M}_g^{p,q}[N]$
  which has a map $\bar{f}$ to $\overline{\mc{M}}_g[N]$ extending $f$
  (which exists because $k = \mb{C}$), $S^o$ and $S$ as above and $g$
  and $\bar{g}$ the natural classifying maps.

\end{proof}

\begin{remark} \label{r.mr} By a theorem of Masbaum and Reid
  \cite{mas-reid}, for each $g \geq 2$ there are infinitely many
  integers $N$, and for each such $N$ an infinite set of (rational)
  primes $P_N$, such that the group $PSL(N,r)$, for $r \in P_N$,
  occurs as one of the groups $G^{p,q}$. In contrast to this, it
  follows from the congruence subgroup property of
  $\mathbf{Sp}_{2g} (\mb{Z})$, $g \geq 2$, that there are no such
  covers of $\mc{A}_g$ for $g \geq 2$ if $N > 2g$.
\end{remark}

\section{Locally symmetric varieties}\label{s.sv}

In this section, we first (in \S\ref{lss}) fix some notation involving
algebraic groups and Hermitian symmetric domains and state our main
theorems regarding locally symmetric varieties:
Theorems~\ref{t.main-pure} and Theorem~\ref{thm.main2}.
Theorem~\ref{t.main-pure} is then proved in \S\ref{s.gherm} and
Theorem~\ref{thm.main2} is proved in \S\ref{s.proof-of-thm2}.

Our basic references for Hermitian symmetric domains are \cite{helga}
(where they are called Hermitian symmetric spaces of noncompact type)
and \cite{amrt}.

\subsection{Locally symmetric varieties, boundary components and 
  incompressibility}\label{lss}

We say that a semisimple algebraic group $\mathbf{G}$ over
$\mathbb{R}$ is of \emph{Hermitian type} if the Lie group
$G := \mathbf{G}^{\ad}(\mathbb{R})_+$ is isomorphic to $\Aut(D)^0$, the
identity component of the group of biholomorphisms of a Hermitian
symmetric domain $D$.  We note that $G$ and $D$ determine each other,
$D$ being biholomorphic to $G/K$, where $K$ is a maximal compact
subgroup of $G$ and $G/K$ having its natural $G$-invariant complex
structure. The choice of isomorphism does not matter for our purposes,
we fix one and then identify $D$ with $G/K$.  A semisimple algebraic
group over $\mathbb{Q}$ is of \emph{Hermitian type} if
$\mathbf{G}_{\mathbb{R}}$ is of Hermitian type.

Suppose $\mathbf{G} = \mathbf{G}^{\ad}$ and $\Gamma$ is an arithmetic
subgroup of $\mathbf{G}(\mathbb{Q})$ contained in $G$, then, by
Baily--Borel~\cite{BailyBorel}, the complex-analytic space
$\Gamma\backslash D$ has the structure of a quasi-projective complex
variety.  Moreover, if $\Gamma$ is neat, then this structure is unique
by Borel's Extension Theorem~\cite{BorelExt}.  (See \S\ref{s.AC} for a
review of the notions of arithmetic and congruence subgroups.) In
other words, there is a uniquely defined quasi-projective variety
$M_{\Gamma}$ whose analytification $M_{\Gamma}^{\mathrm{an}}$ is
$\Gamma\backslash D$.  The varieties $M_{\Gamma}$ will be called
\emph{locally symmetric varieties}.  We often abuse notation and write
$\Gamma \backslash D$ for $M_{\Gamma}$.

Now suppose that $\Delta$ is a finite index subgroup of $\Gamma$.  We
then have a morphism $\Delta\backslash D\to \Gamma\backslash D$ of
complex analytic spaces and, again using~\cite{BorelExt}, we get a
corresponding morphism $\pi: M_{\Delta}\to M_{\Gamma}$ of
quasi-projective varieties.  Moreover, if we assume that $\Gamma$ is
neat and $\Delta\unlhd \Gamma$, then $M_{\Gamma}$ is smooth and $\pi$
is a $\Gamma/\Delta$-torsor.

We have $D=G/K$ where $K$ is a maximal compact subgroup of $G$.
Moreover, the Cartan involution $\sigma$ corresponding to $K$ gives us
a splitting $\mathfrak{g}=\mathfrak{k}\oplus\mathfrak{p}$, where
$\mathfrak{g}$ and $\mathfrak{k}$ are the Lie algebras of $G$ and $K$
respectively.  Here $\mathfrak{k}$ and $\mathfrak{p}$ are the $+1$ and
$-1$ eigenspaces of $\sigma$ respectively.  The space $\mathfrak{p}$,
which can be identified with the tangent space of $D$ at the point
corresponding to $K$, has a complex structure $J$.  This gives rise to
a decomposition
$\mathfrak{p}_{\mathbb{C}}=\mathfrak{p}_{+}\oplus \mathfrak{p}_{-}$,
where $\mathfrak{p}_{\pm}$ is the $\pm i$-eigenspace of $J$.  The
Harish-Chandra embedding theorem~\cite[Theorem 2.1]{amrt}, then gives
a holomorphic embedding of $D$ into $\mathfrak{p}_+$.

Write $\ovl D$ for the closure of $D$ in $\mathfrak{p}_+$.  Then the
action of $G$ on $D$ extends to $\ovl D$, which is itself a union of
Hermitian symmetric domains called \emph{boundary
  components}~\cite[Definition 3.2]{amrt}.  The \emph{normalizer} of a
boundary component $F$ is then $N(F):=\{g\in G: gF=F\}$.  (Note that
the space $D$ itself is a boundary component with $N(F)=G$.)  If we
write $G$ as a product $G_1\times\cdots \times G_k$ with $G_i$ simple,
then the association $F\mapsto N(F)$ sets up a one-one correspondence
between boundary components and parabolic subgroups $P\leq G$ of the form
$P_1\times \cdots\times P_k$ with each $P_i$ containing a maximal
parabolic subgroup of $G_i$.  (So  each $P_i$ is either maximal parabolic
in $G_i$ or equal to $G_i$ itself.) Let $W(F)$ be the unipotent
radical of $N(F)$ and $U(F)$ the centre of $W(F)$.

A boundary component is said to be \emph{rational} if
$N(F)=\mathbf{P}(\mathbb{R})\cap G$ where $\mathbf{P}$ is a rational
parabolic subgroup of $\mathbf{G}$.  We will call such a parabolic
subgroup \emph{$d$-cuspidal} if the corresponding boundary component
is $d$-dimensional. The Baily--Borel 
compactfication of $\Gamma\backslash D$ involves adding strata
corresponding to rational boundary components~\cite{BailyBorel}, the
stratum associated to a $d$-cuspidal parabolic being of dimension
$d$. (When $D$ has no rational boundary components the space
$\Gamma\backslash D$ is compact for any arithmetic subgroup
$\Gamma\leq\mathbf{G}(\mathbb{Q})\cap G(\mathbb{R})$.)

With the above notation, we can state a theorem. 

\begin{theorem}\label{t.main-pure}
  Let $\mathbf{G}$ be an algebraic group over $\mathbb{Q}$ of
  Hermitian type with corresponding Hermitian symmetric domain $D$,
  let $F$ be a rational boundary component of $D$ and set
  $U:=U(F)$. Let $\Delta$ and $\Gamma$ be neat arithmetic subgroups of
  $G \cap \mathbf{G}(\mathbb{Q})$ with $\Delta\unlhd \Gamma$, and set
  $\mathbb{U}:=(U\cap \Gamma)/(U \cap \Delta)$.  Then any smooth
  $\mathbb{U}$-equivariant compactification $\overline{M}_{\Delta}$ of
  $M_{\Delta}$ has a $\mathbb{U}$-fixed point.
\end{theorem}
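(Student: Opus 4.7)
The plan is to reduce to Proposition~\ref{p.criterion}(2) by passing to the $\mathbb{U}$-subtorsor of $f$. Set $\Gamma':=(U\cap\Gamma)\cdot\Delta$, so $\Delta\subset\Gamma'\subset\Gamma$ and, using $\Delta\unlhd\Gamma$, we have $\Delta\unlhd\Gamma'$ with $\Gamma'/\Delta=(U\cap\Gamma)/(U\cap\Delta)=\mathbb{U}$. As a subgroup of the neat arithmetic group $\Gamma$, the group $\Gamma'$ is itself neat arithmetic, so $M_{\Gamma'}$ is a smooth locally symmetric variety and $M_\Delta\to M_{\Gamma'}$ is a finite \'etale $\mathbb{U}$-torsor. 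The given $\overline{M}_\Delta$ is then a smooth proper $\mathbb{U}$-variety equivariantly birational to $M_\Delta$, so by Proposition~\ref{p.criterion}(2) it is enough to construct a simplicial toroidal singularity $S$ with boundary-complement $S^o$ and morphisms $g:S^o\to M_{\Gamma'}$, $\bar{g}:S\to\overline{M}_{\Gamma'}$ (into some partial compactification) such that
\[
\pi_1^{\mathrm{et}}(S^o)\to\pi_1^{\mathrm{et}}(M_{\Gamma'})\twoheadrightarrow\mathbb{U}
\]
is surjective.

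I would construct these from the Siegel domain realization of $D$ near $F$ (following \cite{amrt}). In that presentation, $U_{\mathbb{C}}=U\otimes_{\mathbb{R}}\mathbb{C}$ acts on $D$ by translation in a $d$-dimensional complex coordinate $z$ (with $d=\dim_{\mathbb{R}}U$), and $L:=U\cap\Gamma$ is a cocompact lattice in $U$ (by rationality of $F$) acting freely by real translation. The quotient $L\backslash D$ sits as an open subset of a holomorphic bundle over $F$ whose fiber over $t\in F$ is (an open subset of) $(U_{\mathbb{C}}/L)\times V(F)\cong(\mathbb{C}^*)^d\times V(F)$, where $V(F)=W(F)/U(F)$. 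Choose a smooth $L$-admissible rational polyhedral decomposition of the positivity cone in $U$ containing a top-dimensional simplicial cone $\sigma$ generated by a $\mathbb{Z}$-basis of $L$; the associated partial toroidal compactification $\overline{M}_{\Gamma'}$ then has, at a point of the new boundary stratum corresponding to $(F,\sigma)$, a local chart isomorphic to the analytic germ of $\Delta^d$ times a transversal slice. Using the remark following Proposition~\ref{p.criterion}, I take $S=\Delta^d$ (so $S$ is smooth, hence a simplicial toroidal singularity) and $S^o=(\Delta^*)^d$; restricting to a transversal slice in $V(F)\times F$ yields the embeddings $S^o\hookrightarrow L\backslash D$ and $S\hookrightarrow\overline{M}_{\Gamma'}$, from which $g$ and $\bar{g}$ are obtained by composing with the quotient $L\backslash D\to\Gamma'\backslash D=M_{\Gamma'}$.

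The monodromy check is then direct: $\pi_1^{\mathrm{et}}(S^o)\cong\widehat{\mathbb{Z}}^d$, with the $d$ generators (loops around the toric boundary divisors) mapping to the chosen $\mathbb{Z}$-basis of $L\subset\pi_1^{\mathrm{et}}(L\backslash D)$; composing with $L\hookrightarrow\Gamma'\twoheadrightarrow\Gamma'/\Delta$ gives the surjection $L\twoheadrightarrow L/(L\cap\Delta)=\mathbb{U}$, as required. Proposition~\ref{p.criterion}(2) then produces the desired $\mathbb{U}$-fixed point on $\overline{M}_\Delta$.

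The main technical obstacle is matching the AMRT Siegel-domain description of a neighborhood of $F$ to a polydisk chart of a smooth partial toroidal compactification in which the monodromy around the boundary divisors generates $L$; the existence of a smooth simplicial cone $\sigma$ spanned by a basis of $L$ inside the positivity cone (which requires refining the cone decomposition) and the compatibility of the slicing with the Siegel realization are the only delicate points, all standard consequences of the AMRT theory. Once this local picture is in hand, everything reduces to a direct invocation of Proposition~\ref{p.criterion}(2).
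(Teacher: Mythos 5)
Your proposal is essentially correct, but it takes a genuinely different route from the paper at the key step of extending $g$ to $\bar g$, and it is worth comparing the two. You first pass to the subtorsor $M_\Delta\to M_{\Gamma'}$ with $\Gamma'=(U\cap\Gamma)\Delta$, so that the Galois group $\Gamma'/\Delta$ is exactly $\mathbb U$; this is a clean way to ensure the hypotheses of Proposition~\ref{p.criterion}(2) are literally satisfied (the paper does not do this reduction explicitly, and works with the full group $K=\Gamma/\Delta$, relying on the fact that only the $\mathbb U$-equivariance of $\overline M_\Delta$ is actually used in the argument). Your main divergence is in how $\bar g$ is produced. The paper never constructs a toroidal compactification: it takes $\overline Y$ to be the Baily--Borel compactification of $\Gamma\backslash D$, produces the polydisc $S^o$ inside $(U\cap\Gamma)\backslash D_w$ using the Siegel domain description and Lemma~\ref{l.cone}, and then extends $g$ to $\bar g\colon S\to\overline Y$ by a black-box appeal to Borel's extension theorem. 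You instead build a partial toroidal compactification of $M_{\Gamma'}$ near $F$ and exhibit $S\hookrightarrow\overline M_{\Gamma'}$ as a literal polydisc chart at the $\sigma$-stratum. Both routes work, but yours needs the full AMRT admissibility machinery (and, strictly speaking, you should take $\sigma$ to be some smooth top-dimensional cone in a $\Gamma'(F)$-admissible smooth refinement of the positivity cone decomposition rather than a preselected cone spanned by a basis of $L$, since the refinement process may subdivide $\sigma$; this costs nothing, as any smooth top-dimensional cone is generated by a $\mathbb Z$-basis of $L$ and hence yields the full lattice $L$ as monodromy). The paper's choice of Baily--Borel plus Borel extension buys a shorter proof with no fan choices at all; yours gives a more concrete picture of the boundary and makes the polydisc/monodromy computation completely explicit. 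Either way the monodromy computation and the invocation of Proposition~\ref{p.criterion}(2) are as you describe.
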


Coupled with the fixed point method, Theorem~\ref{t.main-pure} can be
used to give lower bounds for the essential dimension of congruence
covers. Tube domains, see \S\ref{s.proof-of-thm2}, are a special class
of Hermitian symmetric domains, and if $D$ is such and $\mathbf{P}$ is
$0$-cuspidal then we can even prove incompressibility for certain
covers. Since the proof in this case is very simple and does not use
the general theory of Hermitian symmetric domains, we state and prove
this special case as Theorem~\ref{thm.main2} below.

(See \S\ref{s.AC} for the notion of a subgroup defined by congruence
conditions, which we use in the statement.)

\begin{theorem} \label{thm.main2} Let $U$ be a finite dimensional real
  vector space and let $D= C + iU$ be a tube domain in
  $U_{\mathbb{C}}$ associated to a self-adjoint homogenous open cone
  $C \subset U$.  Let $\mathbf{G}$ be an adjoint algebraic group over
  $\mathbb{Q}$ such that
  $G :=\mathbf{G}(\mathbb{R})_+ \cong \Aut(D)^0$.  Let $P$ be the
  normalizer of $U$ in $G$ ($U$ acts on $D$ by translations) and
  assume that $P = \mathbf{P}(\mathbb{R}) \cap G$, where
  $\mathbf{P} \subset \mathbf{G}$ is a parabolic subgroup.  Let
  $\Delta$ and $\Gamma$ be neat arithmetic subgroups of
  $G \cap \mathbf{G}(\mathbb{Q})$ with $\Delta\unlhd \Gamma$, and set
  $\mathbb{U}:=(U\cap \Gamma)/(U \cap \Delta)$.  Then:
  \begin{enumerate}
  \item Any smooth $\mathbb{U}$-equivariant compactification
    $\overline{M}_{\Delta}$ of $M_{\Delta}$ has a $\mathbb{U}$-fixed
    point.
  \item If the rank of $\mathbb{U} \otimes \mathbb{F}_p$ is equal to
    $\dim(D)$ then the cover $\pi: M_{\Delta} \to M_{\Gamma}$ is
    $p$-incompressible.
    \end{enumerate}
    Furthermore, given $\Gamma$ as above, for any prime $p$ there
    exists $\Delta \unlhd \Gamma$ such that the rank of
    $\mathbb{U} \otimes \mathbb{F}_p$ is equal to $\dim(D)$.
\end{theorem}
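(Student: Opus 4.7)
The plan is to apply Proposition~\ref{p.criterion} to the cover $M_\Delta \to M_\Gamma$ (with $Y = M_\Gamma$, $X = M_\Delta$, $G = \Gamma/\Delta$) using a toroidal singularity $S$ constructed from the tube domain structure near the $0$-cusp; parts (2) and the existence of $\Delta$ will then follow from the fixed point method and a congruence-subgroup construction. By Borel's theorem on arithmetic subgroups of unipotent groups, $\Lambda := U \cap \Gamma$ is a lattice of full rank $n = \dim D$ in $U$, so the translation action of $\Lambda$ on $U_{\mathbb{C}}$ identifies $T := \Lambda \backslash U_{\mathbb{C}}$ with an algebraic torus $(\mathbb{C}^*)^n$ in which $\Lambda \backslash D$ appears as the open subset cut out by $-\tfrac{1}{2\pi}\log|q| \in C$. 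Choose a smooth simplicial cone $\sigma \subseteq \overline{C}$ of dimension $n$ and let $T_\sigma \supseteq T$ be the associated affine toric variety with torus-fixed point $\ast_\sigma$; setting $S := \Spec(\widehat{\mathcal{O}}_{T_\sigma, \ast_\sigma})$, this is a simplicial toroidal singularity whose boundary complement $S^o$ sits naturally inside $T$.

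For $q$ sufficiently close to $\ast_\sigma$, the vector $-\log|q|$ lies deep in the interior of $\sigma \subseteq \overline{C}$, hence in $C$, so the tautological map $S^o \to T$ lands in $\Lambda \backslash D$; composing with $\Lambda \backslash D \to M_\Gamma$ gives $g \colon S^o \to Y$. Taking $\overline{Y}$ to be $M_\Gamma$ together with the $0$-cuspidal point of its Baily--Borel compactification, $g$ extends to $\overline{g} \colon S \to \overline{Y}$ sending $\ast_\sigma$ to the cusp. Since $D$ is contractible, $\pi_1(M_\Gamma^{\mathrm{an}}) = \Gamma$, and the composition $\pi_1(T) = \Lambda \to \pi_1(\Lambda \backslash D) \to \Gamma$ is the inclusion $\Lambda \hookrightarrow \Gamma$; passing to $\Gamma/\Delta$ identifies the image of $\pi_1^{\mathrm{et}}(S^o) \to \pi_1^{\mathrm{et}}(Y) \twoheadrightarrow G$ with $\mathbb{U}$. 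Proposition~\ref{p.criterion}(2) now provides a $\mathbb{U}$-fixed point on any smooth proper variety $G$-equivariantly birational to $M_\Delta$, and since every smooth $\mathbb{U}$-equivariant compactification is $\mathbb{U}$-equivariantly birational to such a model (via equivariant resolution of singularities in characteristic zero), the existence of a $\mathbb{U}$-fixed point transfers to every such compactification, proving (1).

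For (2), the hypothesis $\rk(\mathbb{U} \otimes \mathbb{F}_p) = \dim D$ implies that the $p$-torsion subgroup $\mathbb{U}[p]$ is elementary abelian of rank $\dim D$, and the $\mathbb{U}$-fixed point from (1) on any smooth $G$-equivariant compactification is in particular a smooth $\mathbb{U}[p]$-fixed point; Proposition~\ref{prop.fixed-point}(b) applied with $H = \mathbb{U}[p]$ then yields $\ed_G(M_\Delta; p) \geq \dim D = \dim M_\Delta$, which is $p$-incompressibility. For the existence of $\Delta$, fix a faithful rational representation of $\mathbf{G}$ and take $\Delta$ to be the intersection of $\Gamma$ with the principal congruence subgroup of $\mathbf{G}(\mathbb{Z})$ of level $p^k$ for $k$ sufficiently large. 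Because $\mathbf{U}$ is abelian unipotent, its exponential is an algebraic isomorphism onto its Lie algebra, and a direct $\exp$--$\log$ computation shows that for large $k$ one has $U \cap \Delta \subseteq p(U \cap \Gamma)$; since $\Delta$ is normal in $\Gamma$ and neat for large $k$, this yields a surjection $\mathbb{U} \twoheadrightarrow (\mathbb{Z}/p)^{\dim D}$ and hence $\rk(\mathbb{U} \otimes \mathbb{F}_p) = \dim D$.

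The principal technical hurdle is the monodromy computation: pinning down the image of the local monodromy in $\Gamma/\Delta$ as exactly $\mathbb{U}$, rather than some strictly larger subgroup. This relies on the commutativity of $U$ (so that the translation action factors through the abelian quotient) and on the contractibility of $D$ (so that $\pi_1(M_\Gamma) = \Gamma$). Ensuring that the extension of $g$ across $\ast_\sigma$ to $\overline{g} \colon S \to \overline{Y}$ is algebraic (rather than merely analytic) follows from the standard algebraicity properties of the Baily--Borel compactification near a $0$-dimensional cusp.
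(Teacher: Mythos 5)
Your proposal follows essentially the same route as the paper: produce the local monodromy from the translation action of the lattice $U\cap\Gamma$ on the tube domain, identify the image in $\Gamma/\Delta$ with $\mathbb{U}$ via the toric picture near the $0$-cusp (this is Lemma~\ref{l.cone} in the paper), feed this into Proposition~\ref{p.criterion} with $\overline{Y}$ the Baily--Borel compactification, and then deduce parts (2) and (3) from the fixed point method (Proposition~\ref{prop.fixed-point}) and a congruence-subgroup construction (the paper's Lemma~\ref{l.cs}). The one slip is your choice of $\overline{Y}$: when $\mathrm{rank}\,\mathbf{G}>1$ the subset $M_\Gamma\cup\{\text{cusp}\}$ is \emph{not} open in the Baily--Borel compactification (intermediate boundary strata have the cusp in their closure), so it is not a partial compactification in the sense required by Proposition~\ref{p.criterion}; simply take $\overline{Y}$ to be the full Baily--Borel compactification, where your $\overline{g}$ still lands in $M_\Gamma\cup\{\text{cusp}\}$ and the rest of your argument goes through unchanged.
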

A proof of Theorem~\ref{thm.main2} will be given in
Section~\ref{s.proof-of-thm2}.

\begin{remark} \label{r.nonneat}%
  One can sometimes get essentially the same conclusions as in
  Theorems \ref{t.main-pure} and \ref{thm.main2} even when $\Gamma$
  and $\Delta$ are not neat by using a simple base change trick. See
  the proof of the last part of Corollary \ref{cor.main} for an
  illustration.
\end{remark}

Section \ref{s.Ex} is devoted to constructing examples, see in
particular, Theorem \ref{t.nont} and Remark \ref{list}.

\
\begin{remark} \label{r.cpt}%
  Our definition of groups of Hermitian type forces all the simple
  factors of $G$ to be noncompact. For the purpose of constructing the
  locally symmetric varieties $M_{\Gamma}$, for $\Gamma$ an arithmetic
  subgroup of $G$, this is not necessary, and such varieties are
  commonly studied, e.g., in the theory of Shimura varieties. However,
  if $\mathbf{G}$ is simple and $G$ has a compact factor, then
  $M_{\Gamma}$ is compact~\cite[Lemma 3.2 on page 469]{BailyBorel} and
  in this case our methods do not apply.
\end{remark}

As we mentioned earlier, Farb, Kisin and Wolfson have been able to
prove incompressibility of congruence covers in many instances where
$D$ has no nontrivial rational boundary components; i.e, where
$M_{\Gamma}$ is compact.  So, we feel that, taken together,
Theorem~\ref{thm.main2} and~\cite{FKW19} provide enough evidence to
motivate the following conjecture.  It is a specialization of
Conjecture~\ref{c.vhs} from the introduction, but we feel that it is
worthwhile here to make it separately in the context of locally symmetric
varieties.

\begin{conjecture} \label{conj.main} Let $p$ be a prime, $D$  a
  Hermitian symmetric domain, $G = \Aut^0(D)$ and $\Gamma$  an
  arithmetic subgroup of $G$ as above. Then there exists a normal
  subgroup $\Delta\unlhd \Gamma$ of finite index such that the Galois
  cover $M_{\Delta} \to M_{\Gamma}$ is $p$-incompressible.  That is,
viewing $M_{\Delta}$ as a $\Gamma/\Delta$-variety, we have
  $\ed(M_{\Delta}; p) = \dim(D)$. 
\end{conjecture}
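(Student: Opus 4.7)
My plan is to produce, for a suitable normal subgroup $\Delta \unlhd \Gamma$ of finite index, a smooth equivariant compactification $\overline{M}_\Delta$ of $M_\Delta$ on which an elementary abelian $p$-subgroup $H \leq \Gamma/\Delta$ of rank $\dim(D)$ acts with a smooth fixed point. By Proposition \ref{prop.fixed-point}(b) this forces $\ed_{\Gamma/\Delta}(M_\Delta;p) \geq \dim D$; the reverse inequality is immediate, giving $p$-incompressibility. The strategy splits naturally into two cases according to whether $D$ admits a rational boundary component.

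In the non-compact case I would attempt to extend Theorems \ref{t.main-pure} and \ref{thm.main2} by working more carefully with toroidal compactifications. When $D$ is a tube domain with a $0$-cuspidal rational parabolic, Theorem \ref{thm.main2} already provides the conclusion with $\mathbb{U}=(U\cap\Gamma)/(U\cap\Delta)$. For a non-tube $D$, the centre $U(F)$ of the unipotent radical of a maximal rational parabolic has dimension strictly less than $\dim D$, so $U(F)\cap\Gamma$ modulo a congruence subgroup cannot give rank $\dim D$ by itself. I would try to augment the abelian group coming from $U(F)$ by commuting elements drawn from $W(F)$ modulo a sufficiently deep congruence subgroup: since $W(F)$ is nilpotent of class $2$ with $[W(F),W(F)]\subseteq U(F)$, the reduction modulo a high $p$-power in a suitably chosen lattice should yield an abelian quotient whose rank matches $\dim W(F)=\dim D$ exactly in the maximally parabolic case. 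The key step would then be to show that this larger abelian group still has a smooth fixed point on a toroidal compactification built from an appropriate admissible polyhedral decomposition of the conical complex associated with $F$. For a boundary component of positive dimension, one would further combine these unipotent fixed points with translations along $F$, using that boundary strata of toroidal compactifications fibre over Baily--Borel strata with toric fibres.

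The hardest case, and where I expect the main obstacle to lie, is when $D$ has no rational boundary component, so $M_\Gamma$ is already compact and no boundary method applies. Here I would try to adapt the arithmetic degeneration strategy of Farb, Kisin and Wolfson \cite{FKW19}: if $D$ is of Hodge type, embed $M_\Gamma$ into a Siegel modular variety, pass to a good integral model, and extract elementary abelian $p$-groups from the wild inertia of the congruence cover at a prime above $p$, then transport the resulting fixed points back to characteristic zero by deformation. For non-Hodge-type Hermitian domains, e.g., those of type $E_6$, neither this method nor the boundary method of the present paper is available, and one would need a genuinely new construction of elementary abelian fixed-point groups, perhaps by finding an auxiliary family of varieties over which $M_\Gamma$ fibres or into which it embeds and which possesses useful boundary or integral-model behaviour of its own. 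This last point is precisely where the conjecture currently seems out of reach, and any full proof will likely require substantial new ideas beyond the combined arsenal of \cite{FKW19} and the fixed-point method developed here.
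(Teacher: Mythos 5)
This statement is labeled \emph{Conjecture} \ref{conj.main} in the paper and is explicitly presented as an open problem; the authors offer no proof, only Theorem~\ref{thm.main2} and the results of~\cite{FKW19} as motivating evidence. Your ``proposal'' is candid that it too is not a proof, and you correctly identify the two regimes the paper can reach (tube domains with a $0$-cuspidal rational parabolic, via Theorem~\ref{thm.main2}) and cannot (compact locally symmetric varieties, and non-Hodge, non-tube types such as $E_6$; see Remark~\ref{list}(3) and the ``Comparison with work of Farb, Kisin and Wolfson'' subsection).

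That said, the one concrete new idea you propose --- enlarging the elementary abelian group supplied by $U(F)$ by adjoining classes from $W(F)$ modulo a deep congruence subgroup --- does not obviously close the gap, and here is why. In Theorem~\ref{t.main-pure} and its proof, the fixed point is produced by Proposition~\ref{p.criterion} applied to a local model $S$ built from the torus $(U(F)\cap\Gamma)\backslash U(F)_{\mathbb{C}}$ and its toroidal partial compactification; the whole argument is tied to the fact that $U(F)$ is the \emph{centre} of $W(F)$ and acts holomorphically by translations, giving the toric structure. Elements of $W(F)/U(F)$ do not act as a torus on $D(F)$ nor on a toroidal boundary chart, so even if the image of $W(F)\cap\Gamma$ in $\Gamma/\Delta$ were abelian of rank $\dim D$ (a numerical claim that itself needs checking case by case: $\dim W(F)=\dim D$ is exactly the tube-domain condition by Lemma~\ref{l.td}, and fails otherwise), you would have no mechanism for producing a smooth fixed point of the larger group on a compactification. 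Thus the non-tube case remains genuinely open even under your scheme, and the compact case, as you yourself note, is entirely outside the reach of the fixed-point method and would require the arithmetic methods of \cite{FKW19} or something new. In short, your proposal is a reasonable reconnaissance of the problem's difficulty, consistent with the paper's own assessment, but it is not --- and does not claim to be --- a proof of the conjecture.
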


\begin{remark}
  Using Corollary~\ref{cor.versal} and the method of
  Corollary~\ref{cor.non-versal}, it is not hard to see that the
  covers $M_{\Delta}\to M_{\Gamma}$ studied in Theorem~\ref{thm.main2}
  are rarely $p$-versal.
\end{remark}

\subsection{The example of 
\texorpdfstring{$\mathcal{A}_g$}{Ag}}  
\label{ss.ag}
One typical example where we can apply Theorem~\ref{thm.main2} occurs
in the case of the moduli space of principally polarized abelian
varieties considered above when
$\mathbf{G}=\mathbf{PSp}_{2g,\mathbb{Q}}$ and $D$ is the Siegel
upper half-space (which is a tube domain, see Example
\ref{ex.siegel}). In fact, this is the example which motivated this
paper. It also concretely illustrates several of the issues that we
will have to deal with later in a more abstract setting, so we explain
it here.

For each positive integer $N\geq 3$, set
\begin{equation}
  \label{cong-sbgp}
  \Gamma_N:=\ker(\mathbf{Sp}_{2g}(\mathbb{Z})\arr \mathbf{Sp}_{2g}(\mathbb{Z}/N)).
\end{equation}
(Here we view $\mathbf{Sp}_{2g}$ as the group of symplectomorphisms of
the lattice $\mathbb{Z}^{2g}$ with a fixed symplectic form---a
reductive group scheme over $\Spec\mathbb{Z}$.)  Since $N\geq 3$, the
restriction of the central isogeny
$\mathbf{Sp}_{2g,\mathbb{Q}}\to \mathbf{G}$ to $\Gamma_N$ is
injective.  So $\Gamma_N$ is naturally isomorphic to its image
$\overline{\Gamma}_N$ in $\mathbf{G}(\mathbb{Q})$.  Moreover, this
image is contained in $G=\mathbf{G}(\mathbb{R})_+$.  (This follows
from the fact that $\Sp_{2g}(\mathbb{R})$ is connected.)

The variety $M_{\overline{\Gamma}_N}$ is the coarse moduli space
$\mathcal{A}_{g,N}$ of principally polarized $g$-dimensional abelian
varieties with level $N$-structure, and, if $p$ is a prime not
dividing $N$, the map $\pi:\mathcal{A}_{g,pN}\to\mathcal{A}_{g,N}$ is
an $\Sp_{2g}(\mathbb{F}_p)$-torsor.  In other words, if we set
$\Gamma:=\overline{\Gamma}_N$ and $\Delta:=\overline{\Gamma}_{pN}$,
then the map $\mathcal{A}_{g,pN}\to \mathcal{A}_{g,N}$ can be
identified with the map $M_{\Delta}\to M_{\Gamma}$.

Let $\widetilde{\mathbf{P}}$ denote a parabolic in
$\Sp_{2g,\mathbb{Q}}$ corresponding to a maximal isotropic subspace
for the symplectic form on $\mathbb{Q}^{2g}$.  For definiteness, 
 fix
the symplectic form $\phi$ (on $\mathbb{Z}^{2g}$) given by the matrix
$$
\begin{pmatrix}
0 & -I\\
I & 0
\end{pmatrix},
$$
where $I$ denote the $g\times g$ identity matrix.  It is not hard to see that the 
unipotent radical of $\widetilde{\mathbf{P}}$ consists of matrices of the form
\begin{equation}\label{e.uf}
\begin{pmatrix}
    I & T\\
    0 & I
\end{pmatrix}
\end{equation}
where $T$ is a symmetric $g\times g$ matrix.  So the unipotent radical 
of $\widetilde{\mathbf{P}}$ is abelian, and is, therefore, its own center. 

Let $\mathbf{P}$ denote the image of $\widetilde{\mathbf{P}}$ in
$\mathbf{G}$, and let $P:=G\cap\mathbf{P}$. Then $P$ is a maximal
parabolic and corresponds to a unique boundary component $F$ (which is
actually a single point in $\ovl D$). In fact, in the tube domain
description of $D$ (see Example \ref{ex.siegel}), this $P$ is exactly
the $P$ in Theorem \ref{thm.main2}. In particular, 
$U(F)$ is equal to the abelian algebraic group defined by the
matrices in \eqref{e.uf}.  The group $\mathbb{U}=U(F)\cap
\Gamma/U(F)\cap\Delta$ from Theorem~\ref{t.main-pure} is just the
group $\Sym^2 \mathbb{F}_p^g$, the vector space of symmetric $2\times
2$-matrices over
$\mathbb{F}_p$ viewed as a group.  So we get the following result
recovering \cite[Theorem 2]{FKW19}:

\begin{corollary} \label{cor.main} Any smooth $\mathbb{U}$-equivariant
  compactification $\overline{\mathcal{A}}_{g,pN}$ of
  $\mathcal{A}_{g, pN}$ has a $\mathbb{U}$-fixed point. Consequently,
  the $\Sp_{2g}(\mathbb{F}_p)$-torsor
  $\mathcal{A}_{g, pN}\to \mathcal{A}_{g, N}$ described above is
  $p$-incompressible. Furthermore, for any $n>1$ and $p \mid n$, the
  map $\mc{A}_{g,n} \to \mc{A}_g$ is $p$-incompressible.
\end{corollary}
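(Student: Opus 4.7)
The plan is to apply Theorem~\ref{thm.main2} directly to the Siegel data already constructed in \S\ref{ss.ag}. First I would verify that the hypotheses of that theorem are met: $D$ is the tube domain of Example~\ref{ex.siegel}, $\mathbf{G} = \mathbf{PSp}_{2g,\mathbb{Q}}$ is adjoint, $\mathbf{P}$ is the (rational) Siegel parabolic stabilising the chosen maximal isotropic subspace, and $U = U(F)$ is its abelian unipotent radical, realized concretely by the matrices in~\eqref{e.uf}. Since $N \geq 3$, both $\overline{\Gamma}_N$ and $\overline{\Gamma}_{pN}$ are neat arithmetic subgroups of $G \cap \mathbf{G}(\mathbb{Q})$, with $\overline{\Gamma}_{pN}$ normal of finite index in $\overline{\Gamma}_N$.

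The heart of the argument is then a short rank count for $\mathbb{U}$. Using the explicit description in~\eqref{e.uf}, $U \cap \overline{\Gamma}_N = N \cdot \mathrm{Sym}^2 \mathbb{Z}^g$ and $U \cap \overline{\Gamma}_{pN} = pN \cdot \mathrm{Sym}^2 \mathbb{Z}^g$, so $\mathbb{U} \cong \mathrm{Sym}^2 \mathbb{F}_p^g$ is an elementary abelian $p$-group of rank $g(g+1)/2 = \dim D$. Parts (1) and (2) of Theorem~\ref{thm.main2} then deliver the first two assertions of the corollary immediately.

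For the final assertion, the obstacle is that $\mathbf{Sp}_{2g}(\mathbb{Z})$ itself is not neat, so Theorem~\ref{thm.main2} does not apply directly to $\mc{A}_{g,n} \to \mc{A}_g$; here I would invoke the base-change trick mentioned in Remark~\ref{r.nonneat}. Fix any auxiliary $N \geq 3$ with $(N, pn) = 1$, so that $\mc{A}_{g,N} \to \mc{A}_g$ is generically \'etale of degree prime to $p$. Generically, the pullback $\mc{A}_{g,n} \times_{\mc{A}_g} \mc{A}_{g,N}$ is a disjoint union of copies of $\mc{A}_{g,nN}$. I would apply Theorem~\ref{thm.main2}(2) to $(\Gamma, \Delta) = (\overline{\Gamma}_N, \overline{\Gamma}_{nN})$: the same calculation yields $\mathbb{U} \cong \mathrm{Sym}^2 (\mathbb{Z}/n)^g$, whose mod-$p$ reduction has $\mathbb{F}_p$-rank $g(g+1)/2 = \dim D$ because $p \mid n$. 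Thus $\mc{A}_{g,nN} \to \mc{A}_{g,N}$ is $p$-incompressible, and since $\ed(-;p)$ is invariant under prime-to-$p$ base change, so is $\mc{A}_{g,n} \to \mc{A}_g$.
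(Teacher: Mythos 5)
Your treatment of the first two assertions is exactly the paper's: $D=\mathfrak{H}_g$ is a tube domain, $U$ is the abelian unipotent radical \eqref{e.uf}, $\overline{\Gamma}_N$ and $\overline{\Gamma}_{pN}$ are neat, and $\mathbb{U}\cong\Sym^2\mathbb{F}_p^g$ has rank $g(g+1)/2=\dim D$, so Theorem~\ref{thm.main2} applies. For the final assertion you correctly invoke a prime-to-$p$ base change because $\mc{A}_g$ is not a quotient by a neat group, but the claim that $\mc{A}_{g,n}\times_{\mc{A}_g}\mc{A}_{g,N}$ is generically a disjoint union of copies of $\mc{A}_{g,nN}$ is false for $n\geq 3$. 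The maps $\mc{A}_{g,m}\to\mc{A}_g$ are quotients by subgroups of $\Sp_{2g}(\mathbb{Z})/\{\pm I\}$ since $-I$ acts trivially on $\mathfrak{H}_g$, and for coprime $n,N\geq 3$ the group $\overline{\Gamma}_n\cap\overline{\Gamma}_N$ strictly contains $\overline{\Gamma}_{nN}$, with index $2$: by surjectivity of $\Sp_{2g}(\mathbb{Z})\to\Sp_{2g}(\mathbb{Z}/nN)$ there is $g\in\Sp_{2g}(\mathbb{Z})$ with $g\equiv I\pmod n$ and $g\equiv -I\pmod N$, whose class lies in the intersection but not in $\overline{\Gamma}_{nN}$. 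So the generic fibre product is a $\mathbb{Z}/2$-quotient of $\mc{A}_{g,nN}$, not $\mc{A}_{g,nN}$ itself; a degree count over $\mc{A}_g$ (each map has degree $|\Sp_{2g}(\mathbb{Z}/m)|/2$) confirms this. This matters because a quotient cover can a priori have strictly smaller essential dimension than the cover it is quotiented from, so $p$-incompressibility of $\mc{A}_{g,nN}\to\mc{A}_{g,N}$ does not by itself yield $p$-incompressibility of the actual fibre-product component over $\mc{A}_{g,N}$, which is what the base-change step requires.

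The fix inside your framework is straightforward: apply Theorem~\ref{thm.main2}(2) with $\Delta=\overline{\Gamma}_n\cap\overline{\Gamma}_N$ in place of $\overline{\Gamma}_{nN}$. This $\Delta$ is still a neat normal subgroup of $\overline{\Gamma}_N$, and since $-I\notin U$ one still has $U\cap\Delta = nN\cdot\Sym^2\mathbb{Z}^g$, so $\mathbb{U}\cong\Sym^2(\mathbb{Z}/n)^g$ has full rank $g(g+1)/2$ after reduction modulo $p$. The paper's own route is slightly different: it uses the finite surjective $\mathbb{U}$-equivariant map from $\mc{A}_{g,pN}$ onto the normalisation $\mc{A}_{g,pN}'$ of the fibre-product component, transports the $\mathbb{U}$-fixed point from the first assertion across this map, and then applies Proposition~\ref{prop.fixed-point} to $\mc{A}_{g,pN}'$. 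Either repair works, but the fibre product must be identified correctly.
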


\begin{proof}[Proof of Corollary~\ref{cor.main} assuming Theorem~\ref{thm.main2}]
  The Siegel upper half-space is a tube domain so, since
  $\mathbb{U} \subset \Sp_{2g}(\mathbb{F}_p)$ and
  $\mathrm{rank} (\mathbb{U}) = \dim \mathcal{A}_{g, pN}$, the first
  and second parts of the corollary follow from
  Theorem~\ref{thm.main2}.

  To prove the second part, we may assume that $p=n$. Let $N > 2$ be
  any integer such that $p \nmid N$. The natural finite surjective
  morphism from $\mc{A}_{g,pN}$ to the normalisation $\mc{A}_{g,pN}'$
  of the component of the fibre product
  $\mc{A}_{g,p} \times_{\mc{A}_g} \mc{A}_{g,N}$ dominating
  $\mc{A}_{g,N}$ is $\mb{U}$-equivariant, so it follows from the first
  part that any $\mb{U}$-equivariant compactification of
  $\mc{A}_{g,pN}'$ has a $\mb{U}$-fixed point.  By Proposition
  \ref{prop.fixed-point}, the map $\mc{A}_{g,pN}' \to \mc{A}_{g,N}$ is
  $p$-incompressible, from which we deduce that the same holds for the
  map $\mc{A}_{g,n} \to \mc{A}_g$.
\end{proof}

\subsection{Arithmetic and Congruence Subgroups}
\label{s.AC}
Here we recall some standard facts and terminology about congruence subgroups.
Our main references are~\cite{SerreCS, Rag04}.

Let $\mathbf{G}$ be a linear algebraic group defined over $\mathbb{Q}$ and
let $\mathbb{A}^f$ denote the ring of finite rational adeles.  The
embedding of $\mathbb{Q}$ in $\mathbb{A}^f$ induces an embedding of
$\mathbf{G}(\mathbb{Q})$ in $\mathbf{G}(\mathbb{A}^f)$, and we use
this embedding to regard the first group (of rational points) as a
subgroup of the second group (of adelic points).

A subgroup $\Gamma\le \mathbf{G}(\mathbb{Q})$ is said to be a
\emph{congruence subgroup} if $\Gamma=K\cap \mathbf{G}(\mathbb{Q})$
for a compact open subgroup $K$ of $\mathbf{G}(\af)$.  A subgroup
$\Gamma\le \mathbf{G}(\mathbb{Q})$ is said to be \emph{arithmetic} if
it is commensurable with a congruence subgroup.  Here two subgroups
$A$ and $B$ of an abstract group $G$ are \emph{commensurable} if
$[A:A\cap B]$ and $[B:A\cap B]$ are
 both finite.

Let $T_c$ denote the subspace topology induced on $\mathbf{G}(\mathbb{Q})$ by
its inclusion in $\mathbf{G}(\mathbb{A}^f)$.  So  $T_c$ has a neighborhood
basis of the identity consisting of congruence subgroups.  Let $T_a$
denote the topology on $\mathbf{G}(\mathbb{Q})$ obtained by taking the
arithmetic subgroups as a neighborhood basis of the identity. Then the
topology $T_a$ is a priori at least as fine as the topology $T_c$.  The
congruence subgroup problem asks whether they are the same.  In other
words, the congruence subgroup problem for $\mathbf{G}$ asks whether every arithmetic
subgroup of $\mathbf{G}(\mathbb{Q})$ is congruence.  For $\mathbf{G}$ solvable this is
known to be the case~\cite[p.~108]{Rag76}.  On the other hand, it is
not the case for $\mathbf{G}=\mathbf{SL}_2$~\cite{SerreCS}, a fact which was apparently
already known to F.~Klein.  Moreover, for $G$ semisimple, it is
never the case unless $\mathbf{G}$ is simply connected~\cite[1.2c]{SerreCS}.
See~\cite{Rag04} for a more complete account of what is known.

If $\Gamma\le G(\mathbb{Q})$, we say that a subgroup
$\Delta$ of $\Gamma$ is \emph{defined by congruence conditions} if
$\Delta=\Gamma\cap K$ for some compact open subgroup of
$G(\mathbb{A}^f)$.  Equivalently, $\Delta$ is open in $\Gamma$
for the subspace topology induced by $T_c$.

The group $\mathbf{GL}_n(\mathbb{A}^f)$ is a locally compact Hausdorff
space, and, if $\rho:\mathbf{G}\to\mathbf{GL}_{n,\mathbb{Q}}$ is a
faithful linear representation, then the induced map
$\rho(\mathbb{A}^f):\mathbf{G}(\mathbb{A}^f)\to\mathbf{GL}_n(\mathbb{A}^f)$
is a homeomorphism of $\mathbf{G}(\mathbb{A}^f)$ onto its 
image, which is necessarily closed.  It follows that the topology $T_c$
on $\mathbf{G}(\mathbb{Q})$ has a basis of open neighborhoods of $1$ of the form
$\rho^{-1}(\Phi_{n,d})$, where $\Phi_{n,d}$ denotes the set of
$n\times n$ matrices in $\mathbf{GL}_n(\mathbb{Z})$ which are congruent
to the identity modulo $d$.

\subsection{Tube domains}%
\label{s.proof-of-thm2}

All the facts about tube domains that we use in this section can be
found in \cite[Chapter X]{Faraut-Koranyi}.
\begin{definition} \label{d.tube}
A \emph{tube domain} is a set of the form
\begin{equation}
  \label{e.tube}
  D = U + iC \subset U_{\mathbb{C}} 
\end{equation}
where $U$ is a finite dimensional real vector space and $C$ is an open
homogeneous self-adjoint cone in $U$. Then $D$ is an open subset of
the complexification $U_{\mathbb{C}}$ of $U$ so it has a natural
structure of complex manifold; in fact, it is always a Hermitian
symmetric domain \cite[Theorem X.1.1]{Faraut-Koranyi}.
\end{definition}
A tube domain is said to be \emph{irreducible} if the cone $C$ cannot
be written as a product of two cones in a nontrivial way. Any tube
domain can be written as a product of irreducible tube domains in an
essentially unique way \cite[Proposition III.4.5]{Faraut-Koranyi}.

The group $U$ acts holomorphically on $D$ by translations, so it is a
subgroup of $\Aut(D)^0$.  In fact, it is the unipotent radical of a
parabolic subgroup $P$ of $\Aut(D)^0$. The tube domain is irreducible
iff $P$ is a maximal parabolic.

We say that a pair $(\mathbf{G}, \mathbf{P})$ with $\mathbf{G}$ a
semisimple algebraic group over $\mathbb{Q}$ and $\mathbf{P}$ a
parabolic subgroup is of \emph{tube type} if there is an isomorphism
of $G := \mathbf{G}^{\ad}(\mathbb{R})_+$ with $\Aut(D)^0$, for $D$ a
tube domain as above, such that $\mathbf{P}(\mathbb{R}) \cap G$
corresponds to $P$.

\begin{example} \label{ex.siegel}
  The basic example of a tube domain is the Siegel upper half-space
  $\mathfrak{H}_g$. In this case, $U$ is the space of
  symmetric real $g \times g$ matrices, $C$ is the cone of positive
  definite matrices and the group $\mathbf{PSp}_{2g}(\mathbb{R})$ acts on
  $\mathfrak{H}_g$ by
  \[
    \gamma \cdot \Omega = (A \Omega + B)\cdot (C \Omega + D)^{-1}
  \]
  for
   $\gamma =
  \bigl( \begin{smallmatrix}
     A & B \\ C &  D
   \end{smallmatrix} \bigr) \in \mathbf{Sp}_{2g}(\mathbb{R})$ (with
   $A,B,C,D$ real $g \times g$ matrices) and
   $\Omega \in \mathfrak{H}_g$. The parabolic subgroup corresponding
   to this presentation is the subgroup $P$ consisting of elements
   $\gamma$ as above with $C = 0$. The group $U$, acting
   by translation on $\mathfrak{H}_g$, is naturally identified with
   the unipotent radical of $P$.
\end{example}

Although not needed for the proof of Theorem \ref{thm.main2}, we give
a characterisation of Hermitian symmetric domains which are of tube
type.

\begin{lemma}\label{l.td}
  Let $D=G/K$ be a Hermitian symmetric domain with boundary component
  $F$ and let $N(F)$, $W(F)$ and $U(F)$ be as in \S\ref{lss}.  Then
  the following are equivalent:
  \begin{enumerate}
  \item $\dim U(F)=\dim D$;
  \item $D$ is biholomorphic to a tube domain (as in Definition
    \ref{d.tube}) in such a way that $N(F)$ corresponds to $P$
    (equivalently, $W(F)$ corresponds to $U$, the unipotent radical of
    $P$).
  \end{enumerate}
  Furthermore, if (1) or equivalently (2) holds, then $F$ is a
  $0$-dimensional boundary component.
\end{lemma}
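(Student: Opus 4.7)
The plan is to use the Siegel domain of the third kind realization of $D$ associated to the boundary component $F$, as developed in \cite[Chapter III]{amrt}. For any boundary component $F$, this realization gives a biholomorphism of $D$ with a domain of the form
\[
\{(u, v, t) \in U(F)_{\mathbb{C}} \times V(F) \times F : \mathrm{Im}(u) - \Phi_t(v,v) \in C\},
\]
where $V(F)$ is a complex vector space, $C \subset U(F)$ is an open homogeneous self-adjoint cone in the sense of Definition \ref{d.tube}, and $\Phi_t$ is a Hermitian-type form depending real analytically on $t \in F$. This realization is compatible with the structure of $N(F)$: $U(F)$ acts by translations in the $u$-coordinate, $W(F)/U(F)$ acts in the $v$-coordinate, and the Levi quotient of $N(F)$ acts on $F$. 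In particular, it yields the dimension identity
\[
\dim_{\mathbb{C}} D = \dim_{\mathbb{R}} U(F) + \dim_{\mathbb{C}} V(F) + \dim_{\mathbb{C}} F.
\]

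For the implication $(2) \Rightarrow (1)$, I would observe that if $D = U + iC$ is a tube domain and $N(F)$ corresponds to the normalizer $P$ of $U$, then $W(F)$ corresponds to $U$, which is abelian, so $U(F) = W(F) = U$. By Definition \ref{d.tube} we have $\dim_{\mathbb{C}} D = \dim_{\mathbb{C}} U_{\mathbb{C}} = \dim_{\mathbb{R}} U = \dim U(F)$.

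For $(1) \Rightarrow (2)$, the dimension identity above forces both $\dim_{\mathbb{C}} F = 0$ and $\dim_{\mathbb{C}} V(F) = 0$. The Siegel domain realization then collapses to
\[
\{u \in U(F)_{\mathbb{C}} : \mathrm{Im}(u) \in C\} = U(F) + iC,
\]
which is exactly a tube domain in the sense of Definition \ref{d.tube}. In this realization $W(F) = U(F)$ acts by translations and is the unipotent radical of $N(F)$, so it plays the role of $U$ in the definition; equivalently $N(F)$ corresponds to $P$. The final assertion that $F$ is $0$-dimensional is immediate from the same dimension argument.

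The main obstacle I expect is not a conceptual one but a bookkeeping one: verifying that the decomposition of the Levi and unipotent structure of $N(F)$ matches precisely the groups $(U, V, F)$ appearing in the Siegel domain realization, and that the realization and its dimension formula are cited in a form that applies to an arbitrary (not necessarily rational) boundary component. These statements are standard in the theory (cf.\ \cite[\S III.4]{amrt}), but a careful cross-reference will be needed so that no hypothesis of tube type is smuggled in before it is proved.
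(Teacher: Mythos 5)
Your proposal is correct and follows essentially the same route as the paper: both rely on the AMRT Chapter~III realization of $D$ relative to the boundary component $F$, count dimensions to force $\dim F = 0$ and $W(F)=U(F)$, and then identify the resulting Siegel domain as a tube domain. The only organizational difference is that the paper first reduces to the irreducible case and then invokes two separate AMRT statements (the real-analytic product decomposition $D\cong F\times C(F)\times W(F)$ from [III, (4.1)] for the dimension count, and [III, Lemma~4.7] for the tube-domain biholomorphism), whereas you package both into the single Siegel-domain-of-the-third-kind realization; the dimension identities are equivalent.
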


{\bf Note:} In Lemma~\ref{l.td}, $\dim D$ denotes the dimension of $D$ as a complex manifold,
  and $\dim U(F)$ denotes the dimension of $U(F)$ as a real Lie group.

  \begin{proof}
 The reduction to the case that $D$ is irreducible is easy. So we assume
 $D$ is irreducible and leave the reduction to the reader.

 For any boundary component $F$, by \cite[III, (4.1)]{amrt} there is a
 real analytic isomorphism $D \cong F \times C(F) \times W(F)$, where
 $C$ is a self-adjoint homogenous open cone in $U(F)$. It follows that
 $\dim U(F)=\dim D$ iff $F$ is $0$-dimensional and $W(F) =
 U(F)$. Assuming this is the case, it follows from \cite[III, Lemma
 4.7]{amrt} that $D$ is biholomorphic to the tube domain corresponding to
 the cone $C(F)$ (and $U(F) = W(F)$ corresponds to $U$).

 Conversely, if $D$ is a tube domain as defined above and if we take
 $F$ to be the boundary component corresponding to the maximal
 parabolic subgroup $P$, then $U(F) = U$ so
 $\dim U(F) = \dim D$.
\end{proof}
\begin{remark} For the explicit classification of tube domains, the
  reader may consult \cite[X.5 ]{Faraut-Koranyi}. The irreducible ones
  correspond to simple Lie groups $G$ of Hermitian type with (real)
  root system of type $C_r$, where $r = \mathrm{rank}(G)$.
\end{remark}


\begin{lemma}\label{l.cs}
  Let $(\mathbf{G}, \mathbf{P})$ be a pair of tube type with
  $\mathbf{G}$ adjoint.  Suppose that
  $\Gamma\leq G \cap\mathbf{G}(\mathbb{Q})$ is an arithmetic subgroup.
  Let $p$ be a prime number. Then there is a normal subgroup $\Delta$
  of $\Gamma$ defined by congruence conditions such that the p-torsion
  subgroup of $H:=(\Gamma\cap U)/(\Delta\cap U)$ has rank $\dim U$.
\end{lemma}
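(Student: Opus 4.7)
The crucial structural observation is that since $(\mathbf{G},\mathbf{P})$ is of tube type, the unipotent radical $U$ of $\mathbf{P}$ is the translation group of the tube domain $D=U(\mb{R})+iC$, so $U$ is commutative and is isomorphic as a $\mb{Q}$-algebraic group to $\bbG_a^d$ with $d=\dim U$. Thus $L:=\Gamma\cap U$ is an arithmetic subgroup of a $\mb{Q}$-vector group, hence a $\mb{Z}$-lattice of rank $d$ in $U(\mb{Q})$. The whole lemma then reduces to producing a normal subgroup $\Delta\unlhd\Gamma$, defined by congruence conditions, with $\Delta\cap U\subseteq pL$: setting $A:=L/(\Delta\cap U)$, this inclusion gives $pA=pL/(\Delta\cap U)$, so $A/pA\cong L/pL\cong(\mb{Z}/p)^d$, while for any finite abelian group the rank of the $p$-torsion subgroup $A[p]$ equals $\dim_{\mb{F}_p}(A/pA)$.

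To construct such a $\Delta$, I would fix a $\mb{Q}$-isomorphism $U\xrightarrow{\sim}\bbG_a^d$ sending $L$ onto $\mb{Z}^d$, identify $U(\af)$ with $(\af)^d$, and set
\[
K_U \;:=\; p\mb{Z}_p^d \,\times\, \prod_{q\ne p}\mb{Z}_q^d \;\subset\; U(\af),
\]
for which a direct calculation gives $K_U\cap U(\mb{Q})=pL$. Since $U$ is a closed $\mb{Q}$-subgroup of $\mathbf{G}$, the topology induced on $U(\af)$ from $\mathbf{G}(\af)$ coincides with its own adelic topology (both are the restricted product topology with matching integral models at almost all $q$), so I can choose a compact open $K\subset\mathbf{G}(\af)$ with $K\cap U(\af)\subseteq K_U$. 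Setting $\Delta_0:=K\cap\Gamma$ then yields a finite-index subgroup of $\Gamma$ defined by congruence conditions, with $\Delta_0\cap U\subseteq pL$.

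Finally, to enforce normality I put $\Delta:=\bigcap_{\gamma\in\Gamma/\Delta_0}\gamma\Delta_0\gamma^{-1}$; each conjugate $\gamma\Delta_0\gamma^{-1}=\gamma K\gamma^{-1}\cap\Gamma$ is still defined by congruence conditions (conjugates and finite intersections of compact opens of $\mathbf{G}(\af)$ are again compact open), so $\Delta$ is normal in $\Gamma$, defined by congruence conditions, of finite index, and $\Delta\cap U\subseteq\Delta_0\cap U\subseteq pL$. The two points needing the most care are the identification $K_U\cap U(\mb{Q})=pL$, which boils down to $\mb{Z}[1/p]\cap p\mb{Z}_p=p\mb{Z}$, and the production of $K\subset\mathbf{G}(\af)$ refining $K_U$ via the matching of adelic and subspace topologies for a closed $\mb{Q}$-subgroup; neither is a genuine obstacle, and the true content of the lemma is the preliminary observation that in the tube case $U$ is a vector group, after which the construction is purely adelic.
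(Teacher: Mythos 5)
Your proof is correct, and it reduces the lemma to the same two structural facts the paper relies on (that in the tube case $U$ is a $\mathbb{Q}$-vector group, and that the subspace topology on $U(\af)$ induced from $\mathbf{G}(\af)$ coincides with the intrinsic adelic topology on $U(\af)$); the step showing $A/pA\cong L/pL$ and the equality of ranks of $A[p]$ and $A/pA$ for a finite abelian group are both fine. You differ from the paper in two technical choices. First, the paper does not construct a compact open $K_U$ by hand; instead it observes that $pU_\Gamma$ is arithmetic in $U$ and invokes Raghunathan's theorem that arithmetic subgroups of unipotent (indeed solvable) $\mathbb{Q}$-groups are congruence, then transports this through the faithful representation $\rho:\mathbf{G}\to\mathbf{GL}_n$ of \cite[Proposition 4.2]{PR} to find $d$ with $\rho^{-1}(\Phi_{n,d})\cap U\le pU_\Gamma$. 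Your explicit construction of $K_U=p\mathbb{Z}_p^d\times\prod_{q\ne p}\mathbb{Z}_q^d$ amounts to reproving this special case directly, which is entirely elementary since $U\cong\mathbb{G}_a^d$. Second, for normality the paper simply takes $\Delta=\Gamma\cap\rho^{-1}\Phi_{n,d}$, which is automatically normal because $\Phi_{n,d}\unlhd\mathbf{GL}_n(\mathbb{Z})$ and $\Gamma\subseteq\rho^{-1}\mathbf{GL}_n(\mathbb{Z})$; you instead pass to the core $\bigcap_{\gamma\in\Gamma/\Delta_0}\gamma\Delta_0\gamma^{-1}$, correctly noting that this remains of the form $\Gamma\cap K'$ for a compact open $K'$. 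Both routes work; the paper's gets normality for free at the cost of citing two external results, while yours is more self-contained but requires the core step and the topology-matching lemma stated explicitly.
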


\begin{proof}
  Set $U_{\Gamma}:=\Gamma\cap U$.  The group $pU_{\Gamma}$ is of
  finite index in $U_{\Gamma}$, so it is itself an arithmetic
  subgroup of $U$.  In fact, since $U$ is unipotent, any arithmetic
  subgroup of $U$ is congruence.  Therefore, $pU_{\Gamma}$ is a
  congruence subgroup of $U$.

  Now, by~\cite[Proposition 4.2]{PR}, we can find a positive integer
  $n$ and a faithful linear linear representation
  $\rho:\mathbf{G}\to\mathbf{GL}_{n,\mathbb{Q}}$ such that $\Gamma$
  is a finite index subgroup of $\rho^{-1}\mathbf{GL}_n(\mathbb{Z})$.
  Since $pU_{\Gamma}$ is a congruence subgroup of $U$, there exists a
  positive integer $d$ such that
  $\rho^{-1}(\Phi_{n,d})\cap U\le pU_{\Gamma}$.  So set
  $\Delta=\Gamma\cap \rho^{-1}\Phi_{n,d}$. Since
  $\Phi_{n,d}\unlhd\mathbf{GL}_n(\mathbb{Z})$, $\Delta\unlhd \Gamma$.
  So the lemma follows.
\end{proof}

\begin{lemma} \label{l.cone} %
  Let $U$ be a real vector space and $C \subset U$ an open convex
  cone. Let $L \subset U$ be a lattice.  Let
  $p: U_{\mathbb{C}} \to T := U_{\mathbb{C}}/L$ be the quotient map
  and $\ovl{T} \subset T$ the partial compactification of $T$ defined
  by a maximal dimensional smooth rational polyhedral cone
  $C' \subset C $. For any $c \in U$, let $D_c \subset U_{\mathbb{C}}$
  be the open set
  \[
    \{ u \in U_{\mathbb{C}} \mid \im(u) - c \in C \} \ .
  \]
  For any sufficiently small polydisc $\ovl{S}$ centred at the torus
  fixed point $t_0$ of $\ovl{T}$, $S = \ovl{S} \cap p(D_c)$ is a
  product of punctured polydiscs.
\end{lemma}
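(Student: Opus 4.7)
The plan is to set up explicit toric coordinates on $\ovl{T}$ near $t_0$ and translate the defining condition $\im(u) - c \in C$ into a transparent inequality on the moduli $|z_i|$. Since $C'$ is a smooth maximal-dimensional rational polyhedral cone, its primitive generators $e_1,\dots,e_n$ form a $\mathbb{Z}$-basis of $L$; let $e_1^*,\dots,e_n^*\in L^*$ be the dual basis. The monomials $z_i := \exp(2\pi i\,\langle e_i^*, u\rangle)$ descend through $p$ and identify a neighbourhood of $t_0$ in $\ovl{T}$ with a polydisc in $\mathbb{C}^n$ centred at the origin $t_0$. Moreover $|z_i| = \exp(-2\pi\,\langle e_i^*, \im(u)\rangle)$, so writing $\im(u) = \sum t_j e_j$ and $c = \sum c_i e_i$ one has $t_i = -\tfrac{1}{2\pi}\log|z_i|$, and the condition $\im(u) - c \in C$ becomes $\sum(t_i - c_i)\, e_i \in C$.

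A sufficient condition for the latter is $t_i > c_i$ for every $i$: this places $\im(u) - c$ in the (relative) interior of $C'$, which is open in $U$ and therefore contained in $C = \mathrm{int}(\ovl{C})$. The inequality $t_i > c_i$ is equivalent to $|z_i| < e^{-2\pi c_i}$, which holds uniformly on $\ovl{S} := \{|z_i|\leq\epsilon\}$ for any $\epsilon < \min_i e^{-2\pi c_i}$. For any point of $\ovl{S}$ with all $z_i \neq 0$ the values $|z_j|$ determine $\im(u)$ uniquely, and the above shows $\im(u) - c \in C$; hence any lift $u\in U_{\mathbb{C}}$ lies in $D_c$ and maps to the prescribed point. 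Conversely $p(D_c)\subset T$ avoids the toric boundary $\{z_i=0\}$ by construction, so $S = \ovl{S}\cap p(D_c) = \prod_i \{0 < |z_i| \leq \epsilon\}$, which is the product of punctured discs asserted.

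The only step requiring genuine care is the inclusion of the interior of $C'$ into $C$; this is a standard consequence of the fact that the cones in an admissible polyhedral decomposition of $\ovl{C}$ have relative interiors lying either in $C$ or in $\partial\ovl{C}$, and the top-dimensional case forces the former. Beyond this observation the lemma reduces to unpacking the $\exp(2\pi i\,\cdot)$ dictionary on a polydisc neighbourhood of $t_0$, and I anticipate no further obstacle.
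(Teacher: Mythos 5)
Your proof is correct and follows essentially the same strategy as the paper: use the smoothness of $C'$ to pick a $\mathbb{Z}$-basis of $L$ (equivalently, coordinates identifying $C'$ with the positive orthant), express the condition $\im(u)-c\in C$ in terms of moduli $|z_i|$, observe that deep inside $C'$ one is automatically in $C$, and close the argument by noting $p(D_c)\subset T$ avoids the toric boundary. The paper phrases the containment a touch more tersely as ``Since $C'\subset C$, and $C$ is open and convex, \dots'' and doesn't bother with an explicit $\epsilon$; your version is just a more explicit unwinding of the same coordinates.

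One presentational remark: the phrase ``which is open in $U$ and therefore contained in $C$'' reads at first like a non-sequitur. What you actually mean (and what is correct) is the chain $\mathrm{int}(C')\subset\mathrm{int}(\ovl C)=C$, using that the interior of a top-dimensional cone in an admissible decomposition lies in $C$; you do spell this out in your final paragraph, but it would be cleaner to state that inclusion directly where it is first used rather than invoking openness as if it alone sufficed.
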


\begin{proof}
  We may choose coordinates such that
  $L = \mathbb{Z}^n \subset \mathbb{R}^n = U$ for some $n > 0$,
  $U_{\mathbb{C}} = \mathbb{C}^n$, and $T =
  (\mathbb{C}^{\times})^n$. We may also assume that
  \[
   C' =  \{ (x_1,x_2,\dots, x_n) \in \mathbb{R}^n \ |\  x_i \geq 0 \ \forall\  i\}
 \]
 and then $\ovl{T} = \mathbb{C}^n$, with
 $p: \mathbb{C}^n \to (\mathbb{C}^{\times})^n$ given by
  \[
    (z_1,z_2,\dots,z_n) \mapsto (e^{2\pi iz_1},e^{2\pi iz_2},\dots,
    e^{2\pi i z_n}) \ .
  \]
  Since $C' \subset C$, and $C$ is open and convex, $\{  (z_1,z_2,\dots,z_n) \in \mathbb{C}^n \ \mid \im(z_i) \gg 0 \ \forall \
  i\} \subset D_c$. It follows that $p(D_c)$ contains
  \[
    \{ (w_1,w_2,\dots,w_n) \in (\mathbb{C}^{\times})^n \ \mid \ |w_i| < \epsilon \
    \forall \ i\}
  \]
  if $\epsilon$ is sufficiently small.
\end{proof}

\begin{proof}[Proof of Theorem~\ref{thm.main2}]

  Let $X = M_{\Delta}$, $Y = M_{\Gamma}$, $K:=\Gamma/\Delta$, and
  $f:X \to Y$ the natural map making $X$ into a $K$-torsor over $Y$.
  Let $\ovl{Y}$ be the Baily--Borel \cite{BailyBorel} compactification
  of $Y$ and $\ovl{X}$ a smooth $\mathbb{U}$-equivariant
  compactification of $X$ such that there is a morphism
  $\bar{f}:\ovl{X} \to \ovl{Y}$ extending $f$.

  The group $U \cap \Gamma$, which is a lattice in $U$, gives rise to
  an algebraic torus $T = (U \cap \Gamma)\backslash U_{\mathbb{C}}$
  containing $(U \cap \Gamma)\backslash D$. Let $C' \subset C$ be a
  smooth rational polyhedral cone of maximal dimension, where the
  integral structure defining smoothness is given by $U \cap
  \Gamma$. The cone $C'$ gives rise to a torus embedding
  $T \subset \ovl{T}$ with $\ovl{T}$ an affine space. Since
  $C' \subset C$, the intersection of a polydisc $\ovl{S}$ around
  $t_0$, the torus fixed point of $\ovl{T}$, with $\ovl{S}$ is a
  product $S$ of punctured polydiscs (apply Lemma \ref{l.cone} with
  $c=0$) and the map $\pi_1(S) \to \pi_1(T) = (U \cap \Delta)$ is an
  isomorphism.

  The natural map $(U\cap \Gamma)\backslash D \to \Gamma\backslash D$
  induces a holomorphic map $g: S \to Y$ with the property that the
  induced map on $\pi_1$ corresponds to the inclusion
  $U \cap \Gamma \to \Gamma$. Therefore, the image of the map
  $g_*: \pi_1(S) \to K = \Gamma/\Delta$ is the finite abelian group
  $\mathbb{U}$.  By the Borel extension theorem \cite{BorelExt}, the
  map $g$ extends to a holomorphic map $\bar{g}: \ovl{S} \to \ovl{Y}$.
  By Proposition \ref{p.criterion}, $\mathbb{U}$ has a fixed point in
  $\ovl{X}$, so this proves part (1) of the theorem.

  Part (2) of the theorem follows from this by
  Proposition~\ref{prop.fixed-point} and part (3) of the theorem is
  the content of Lemma \ref{l.cs}.
\end{proof}

\subsection{General Hermitian symmetric domains} \label{s.gherm}


General Hermitian symmetric domains do not have as simple a
description as do tube domains, but the theory of Siegel domains of
the third kind (see, e.g.,~\cite[III \S4]{amrt}) allows us to give a
proof of Theorem \ref{t.main-pure} which is very similar to the proof of
Theorem \ref{thm.main2}.

Let $D$, $G$, $\mathbf{G}$ be as in \S\ref{lss} and let $F$ be a rational
boundary component. We let $N(F)$ be the normalizer of $F$, $W(F)$ its
unipotent radical and $U(F)$ the centre of $W(F)$.

We have already mentioned the Harish-Chandra embedding, which is a
holomorphic embedding of $D$ in $\mathfrak{p}_+$.  Recall that $D=G/K$
where $K$ is a maximal compact subgroup of $G$ (the stabilizer of a
given element of $D$).  Let $G_{\mathbb{C}}$ (resp. $K_{\mathbb{C}}$)
denote the complexification of $G$ (resp. $K$)~\cite[III \S 6,
Proposition 20]{BourbakiLie1-3}. The subspaces $\mathfrak{p}_{\pm}$ of
$\mathfrak{p}_{\mathbb{C}}$ are abelian subalgebras of $\mathfrak{g}$
corresponding to subgroups $P_{\pm} =\exp \mathfrak{p}_{\pm}$ of
$G_{\mathbb{C}}$.  Moreover, $K_{\mathbb{C}}$ normalizes $P_{-}$ and
the resulting subgroup $K_{\mathbb{C}}P_{-}$ is parabolic.  Set
$\Dcech:=G_{\mathbb{C}}/K_{\mathbb{C}}P_{-}$; this space, a complex
generalized flag variety, is called the \emph{compact dual} of $D$.
The exponential map $\exp:\mathfrak{p}_+\to G_{\mathbb{C}}$ followed
by the quotient map to $\Dcech$ then gives rise to an open immersion
of $\mathfrak{p}_+$ in $\Dcech$~\cite[Theorem III.2.1]{amrt}.  The
composition of the two embeddings
$D\hookrightarrow \mathfrak{p}_+\hookrightarrow \Dcech$ is known as
the \emph{Borel embedding}.

\subsubsection{}
All the results stated in this subsection can be found in \cite[III
\S4.3]{amrt} (esp.~pp 152-153) and the references therein.

Let $D(F) = U(F)_{\mathbb{C}} \cdot D \subset\Dcech$. This is a
submanifold of $\Dcech$ on which there is an action of
$U(F)_{\mathbb{C}}$ such that $D$ is preserved by $U(F)$. The group
$U(F)_{\mathbb{C}}$ acts freely (and holomorphically) on $D(F)$ and
the quotient by this action is a complex manifold $D(F)'$, so we have
a map $\pi_F': D(F) \to D(F)'$ whose fibres are principal homogenous
spaces over $U(F)_{\mathbb{C}}$. Furthermore, $D(F)'$ is contractible.

For any $w \in D(F)'$, we may identify $(\pi_F')^{-1}(w)$ with $U(F)$
by choosing any point as the origin. Then
\begin{equation} \label{e.Dw}
 D_w := (\pi_F')^{-1}(w)\cap D= \{u \in U(F)_{\mathbb{C}} | \im(u) \in C(F) - c(w)\}
\end{equation}
where $C(F)$ is an open self-adjoint homogenous cone in $U(F)$ and
$c(w) \in U(F)$.

\subsubsection{}

\begin{proof}[Proof of Theorem~\ref{t.main-pure}]
  The proof is very similar to the case of tube domains.

  Set $U := U(F)$.  The inclusion of $D$ in $D(F)$ induces an
  inclusion
  $(\Gamma \cap U)\backslash D \to (\Gamma \cap U)\backslash D(F)$
  which induces an isomorphism on fundamental groups. Since the map
  $\pi_F'$ is equivariant for the action of $U(F)_{\mathbb{C}}$, it is
  also equivariant for the action of $U \cap \Gamma$. For any
  $w \in D(F)'$, the inclusion
  $(U \cap \Gamma)\backslash (\pi_F')^{-1}(w) \to (U \cap
  \Gamma\backslash D(F))$ also induces an isomorphism on fundamental
  groups. We now apply Lemma \ref{l.cone} to get
  $S \subset (U \cap \Gamma)\backslash D_w$, a product of punctured
  discs, and a sequence of maps
\[
  S \to (U \cap \Gamma) \backslash D_w \to (U \cap \Gamma)\backslash D
  \to \Gamma \backslash D.
\]
The discussion above shows that the image of the map on fundamental
groups induced by the composite of these maps is $U \cap \Gamma$. We
may now complete the proof as in the case of tube domains by using the
Baily--Borel compactification of $D/\Gamma$ and the Borel extension
theorem.
\end{proof}

\subsection{Incompressibility in positive characteristic}
\label{pos}

In many cases the locally symmetric varieties $M_{\Gamma}$ have a
modular interpretation which leads to a natural model defined over a
well-defined number field $L$. In fact, they often have natural smooth
models $\mathcal{M}_{\Gamma}$ over a localisation $R$ of the ring of
integers of $L$, so we can reduce $\mathcal{M}_{\Gamma}$ modulo
maximal ideals $P$ of $R$. When $M_{\Delta}$ also has such a model, we
get a finite \'etale covering
$\mathcal{M}_{\Delta, k} \to \mathcal{M}_{\Gamma,k}$, where $k =
R/P$. It is then natural to ask when these covers are incompressible
or $p$-incompressible (over $\bar{k}$).

Our proof of incompressibility in the case of the moduli space of
curves was characteristic free, but for locally symmetric varieties
the proof was complex analytic and does not immediately extend to
fields of positive characteristic: for example, in this case there is
no analogue of Borel's extension theorem, even for
$\mathcal{A}_g$. However, the theory of toroidal compactifications of
integral models allows us to bypass this difficulty by using the
existence of fixed points in characteristic zero to get fixed points
(on suitable compactifications) over $k$ as well, from which we can
deduce incompressibility using the fixed-point method. All this is
best done using the language of Shimura varieties, however, rather
than explaining this in detail we give a proof of the analogue of
Corollary \ref{cor.main} and then point out the references which can
be used to generalize this result.

\smallskip

Let $k$ be an algebraically closed field of characteristic $l>0$. The
varieties $\mathcal{A}_{g,N}$ are moduli spaces of $g$-dimensional
principally abelian varieties with level $N$ structure, so can be
defined over $k$ as long as $l \nmid N$ \cite{git}. If $p$ is a prime
not dividing $N$ and $l \neq p$, then the cover
$\mathcal{A}_{g,pN}/k \to \mathcal{A}_{g,N}/k$ is defined and is an
$\Sp(2g,\mathbb{F}_p)$-torsor if $N \geq 3$. Here we use $/k$ to
emphasize that the varieties are over the field $k$. The theorem below
extends \cite[Theorem 2]{FKW19} to fields of positive characteristic.

\begin{theorem} \label{c.agp}%
  The $\Sp_{2g}(\mathbb{F}_p)$-torsor
  $\mathcal{A}_{g,pN}/k \to \mathcal{A}_{g,N}/k$ is $p$-incompressible
  if $p\nmid N$, $N \geq 3$ and $l \nmid pN$.  Furthermore, for any
  $n>1$ and prime $p$ such that $p \mid n$ and $(n, p\car{k}) = 1$,
  the map $\mc{A}_{g,n} \to \mc{A}_g$ is $p$-incompressible.

\end{theorem}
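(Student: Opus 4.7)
The plan is to emulate the proof of Corollary~\ref{cor.main}, but replace the Baily--Borel compactification and Borel's extension theorem (which are complex analytic in nature) with the Faltings--Chai integral toroidal compactifications of $\mathcal{A}_{g,N}$ over $\Spec \mathbb{Z}[1/N]$. The target will be to produce a simplicial toroidal singularity $S$ over $k$, with a suitable map to $\mathcal{A}_{g,N}/k$ extending to the compactification, whose local monodromy in $\Sp_{2g}(\mathbb{F}_p)$ is the elementary abelian $p$-group $\mathbb{U} = \Sym^2(\mathbb{F}_p^g)$ of rank $g(g+1)/2 = \dim\mathcal{A}_g$, and then apply Theorem~\ref{t.compress} directly.

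Concretely, I would first choose (as in the proof of Corollary~\ref{cor.main}) a zero-dimensional rational cusp of $\mathcal{A}_{g,N}$, and pick a smooth rational polyhedral cone $\sigma \subset C \subset U_{\mathbb{R}} = \Sym^2(\mathbb{R}^g)$ of maximal dimension in the cone of positive definite symmetric matrices, with respect to the lattice $U \cap \Gamma_N = \Sym^2(\mathbb{Z}^g)$ (after an appropriate scaling by $N$). By the Faltings--Chai construction~\cite{FaltingsChai}, a suitable choice of admissible rational polyhedral cone decompositions extending $\sigma$ yields smooth projective toroidal compactifications $\overline{\mathcal{A}}_{g,N} \supset \mathcal{A}_{g,N}$ and $\overline{\mathcal{A}}_{g,pN} \supset \mathcal{A}_{g,pN}$ over $\Spec \mathbb{Z}[1/pN]$, together with a finite morphism $\overline{\mathcal{A}}_{g,pN} \to \overline{\mathcal{A}}_{g,N}$ extending the congruence cover. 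The formal neighbourhood of the boundary stratum associated to $\sigma$ is the formal completion of the affine toric $\mathbb{Z}[1/N]$-scheme associated to $\sigma$ along its unique closed torus orbit; base-changing to $k = \bar{R/P}$ with $l = \car(k) \nmid pN$ and completing, I obtain the desired simplicial toroidal singularity $S/k$, with $S^{\circ}$ the complement of the toric boundary mapping to $\mathcal{A}_{g,N}/k$.

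The key monodromy computation is then: the map $S^{\circ} \to \mathcal{A}_{g,N}/k$ factors through the open torus $\mathbb{T} = U \cap \Gamma_N \backslash U_{\mathbb{C}}$ (the universal semi-abelian degeneration in these coordinates), and the \'etale fundamental group of $S^{\circ}$ surjects onto the prime-to-$l$ completion of $U \cap \Gamma_N$; the composition with $\pi_1^{\mathrm{et}}(\mathcal{A}_{g,N}/k) \twoheadrightarrow \Sp_{2g}(\mathbb{F}_p)$ factors through $(U\cap \Gamma_N)/(U \cap \Gamma_{pN}) = \Sym^2(\mathbb{F}_p^g)$, which is precisely $\mathbb{U}$, of rank $\dim \mathcal{A}_g$. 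The main obstacle is verifying this monodromy integrally: one needs to know that the mod-$p$ level structure degenerates in exactly the same toroidal/Tate fashion in characteristic $l$ as it does over $\mathbb{C}$. This is exactly what Faltings--Chai establish via their construction of the universal semi-abelian scheme and of the level structures on it over the toroidal compactification; alternatively, one can spread out the characteristic zero picture from $\Spec \mathbb{Z}[1/pN]$ to any geometric point of residue characteristic $l$. With $\overline{X} := \overline{\mathcal{A}}_{g,pN}/k$ and $\overline{Y} := \overline{\mathcal{A}}_{g,N}/k$, all hypotheses of Theorem~\ref{t.compress} are met, so $\ed_{\Sp_{2g}(\mathbb{F}_p)}(\mathcal{A}_{g,pN}/k;p) \geq g(g+1)/2 = \dim \mathcal{A}_{g,N}/k$, giving the claimed $p$-incompressibility.

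Finally, for the second statement, I would reduce to $n = p$ (since $\ed$ is monotonic under intermediate covers), then pick an auxiliary integer $N \geq 3$ with $(N, pl) = 1$ and consider the normalisation $\mathcal{A}'_{g,pN}$ of the component of $\mathcal{A}_{g,p} \times_{\mathcal{A}_g} \mathcal{A}_{g,N}$ dominating $\mathcal{A}_{g,N}$: the natural $\mathbb{U}$-equivariant surjection $\mathcal{A}_{g,pN} \to \mathcal{A}'_{g,pN}$ transfers the $\mathbb{U}$-fixed point on an equivariant compactification, and Proposition~\ref{prop.fixed-point} combined with the base-change trick already used in the proof of Corollary~\ref{cor.main} (see Remark~\ref{r.nonneat}) yields $p$-incompressibility of $\mathcal{A}_{g,p}/k \to \mathcal{A}_g/k$.
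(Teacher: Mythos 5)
Your proposal is correct in outline but takes a genuinely different and longer route than the paper. You re-run the machinery of Theorem~\ref{t.compress} entirely in characteristic $l$: constructing a simplicial toroidal singularity $S/k$ at a zero-dimensional cusp via the Faltings--Chai integral toroidal compactification, and then verifying the local monodromy statement (that $\pi_1^{\mathrm{et}}(S^{\circ})$ surjects onto $\mathbb{U}$ in $\Sp_{2g}(\mathbb{F}_p)$) in positive characteristic. This can be made to work, but the monodromy verification you only gesture at --- that the $\mathbf{Z}/pN$-level structure degenerates in the expected Tate/toroidal fashion over the boundary stratum in residue characteristic $l$ --- is precisely the nontrivial step, and it would need to be carried out, for instance by spreading out the characteristic-zero picture over $\Spec\mathbb{Z}[1/pN]$ and invoking the structure of the tame fundamental group of $S^{\circ}$ (compare Lemma~\ref{l.mon}).

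The paper's proof sidesteps this entirely. Its only new input beyond Corollary~\ref{cor.main} is the existence of a smooth projective integral toroidal compactification $\overline{\mathcal{A}}_{g,pN}/R$ over $R = \mathbb{Z}[1/pN, \zeta_{pN}]$ with an extended equivariant $\mathbf{Sp}_{2g}(\mathbb{Z}/pN)$-action (\cite[IV, Theorem 6.7 and V, Theorem 5.8]{FaltingsChai}). The $\mathbb{U}$-fixed locus is then a closed, hence proper, $R$-subscheme; by Corollary~\ref{cor.main} it is nonempty over the generic point, so by properness its image in $\Spec R$ is all of $\Spec R$, and it meets the fiber over $k$. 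Proposition~\ref{prop.fixed-point} then applies directly --- no application of Theorem~\ref{t.compress}, and no positive-characteristic monodromy computation, is needed. So the two arguments buy different things: yours makes the positive-characteristic geometry explicit and would generalize to situations where one does not already have the characteristic-zero statement; the paper's is shorter because it transfers the fixed point rather than reconstructing it. One small discrepancy in your treatment of the second statement: for $p > 2$ the paper applies the first part directly to $\overline{\mathcal{A}}_{g,p}/k$ (no auxiliary $N$ needed), reserving the auxiliary-level base-change trick of Remark~\ref{r.nonneat} for $p = 2$ where level-$p$ structure is insufficiently rigidifying; your uniform use of the trick is fine but slightly less economical.
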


\begin{proof}
  Let $R$ be the ring $\mathbb{Z}[1/n, \zeta_{n}]$, where $\zeta_{n}$
  is a primitive $n$-th root of unity in $\mathbb{C}$, and assume
  $n \geq 3$. By results of Mumford \cite{git}, there exists a smooth
  scheme $\mathcal{A}_{g,n}/R$ whose fibre at any prime $P$ of $R$ is
  the variety $\mathcal{A}_{g,n}/k_P$, where $k_P$ denotes the residue
  field at $P$.  By \cite[IV, Theorem 6.7]{FaltingsChai} there exist
  smooth proper algebraic spaces $\overline{\mathcal{A}}_{g,n}/R$
  (depending on some auxiliary data which we suppress) which contain
  $\mathcal{A}_{g,n}/R$ as a fibrewise dense open subspace and such
  that the natural action of
  $\mathbf{Sp}_{2g}(\mathbb{Z}/n\mathbb{Z})$ on $\mathcal{A}_{g,n}/R$
  extends to $\overline{\mathcal{A}}_{g,n}/R$.  Furthermore, by
  \cite[V, Theorem 5.8]{FaltingsChai}, the auxiliary data may be
  chosen so that $\overline{\mathcal{A}}_{g,n}/R $ is projective, so
  in particular, it is a scheme.

  We now take $n = pN$. Since
  $\overline{\mathcal{A}}_{g,pN}/\mathbb{C}$ is a smooth
  compactification of $\mathcal{A}_{g,pN}/\mathbb{C}$, it follows from
  the discussion in \S\ref{ss.ag}, that the group
  $\mathbb{U} \subset \mathbf{Sp}_{2g}(\mathbb{F}_p) \subset
  \mathbf{Sp}_{2g}(\mathbb{Z}/pN\mathbb{Z})$ has a fixed point in
  $\mathcal{A}_{g,pN}(\mathbb{C})$. Since
  $\overline{\mathcal{A}}_{g,pN}/R$ is proper, by specialisation it
  follows that $\mathbb{U}$ has a fixed point in
  $\mathcal{A}_{g,pN}(k)$.  Since $\mathbb{U}$ is contained in
  $ \mathbf{Sp}_{2g}(\mathbb{F}_p)$, the Galois group of the cover
  under consideration, and
  $\mathrm{rank}(\mathbb{U}) = \dim \mathcal{A}_{g, pN}/k$, the
  theorem follows from the fixed point method (Proposition
  \ref{prop.fixed-point}).

  To prove the second part of the theorem, we may assume that $p=n$.
  If $p > 2$, then $\ovl{\mc{A}}_{g,p}/k$ is smooth and the above
  shows that it has a $\mb{U}$-fixed point so the statement follows
  from the fixed point method. Now suppose $p=2$, let $N> 2$ be any
  integer such that $(N, p\car{k}) = 1$, and consider the morphism
  $\mc{A}_{g,pN}/k \to \mc{A}_{g,pN}'/k$ as in the proof of Corollary
  \ref{cor.main}. Since $p=2$ this is an isomorphism, so the statement
  follows from the second part of the theorem.
 \end{proof}

 \begin{remark} \label{r.hodgetype} %
   As mentioned earlier, versions of Theorems \ref{t.main-pure} and
   \ref{thm.main2} can be proved for certain more general locally
   symmetric varieties, by essentially the same argument as above,
   using integral toroidal compactifications of Shimura varieties of
   Hodge type; see \cite{Lan-PEL} for the PEL case and
   \cite{Madapusi-tor} for the general Hodge type setting. Here we
   need to assume that $\Gamma$ is a congruence subgroup which is
   hyperspecial at the prime $l$ in order to get a model which is
   smooth: this corresponds to the condition $l \nmid N$ above.  We
   leave this as an exercise for the interested reader. However, we
   note that the theory of integral toroidal compactifications is not
   needed to prove these results for the reductions modulo primes of
   large (undetermined) residue characteristic: this follows
   immediately from the results in characteristic zero by ``spreading
   out'' any smooth proper equivariant compactification and applying
   the fixed point method to the reduction modulo such a prime.
\end{remark}

\section{Incompressible hyperspecial congruence covers}\label{s.Ex}  
Pick a prime $p$, which we will keep fixed for this section.  Our goal
is to prove generalizations of some of the results
of~\cite[\S4]{FKW19} producing congruence covers with group
$\mathbb{G}(\mathbb{F}_q)$ where the $\mathbb{G}$ are certain
semisimple algebraic groups over $\mathbb{F}_q$ with $q=p^r$ for some
positive integer $r$.  The main theorem here is Theorem~\ref{t.nont}.
As explained in Remark~\ref{list}, this allows us to produce
congruence covers for most, but not all, of the classical groups
$\mathbb{G}$ considered in~\cite{FKW19}.  The main new result is the
existence of $p$-incompressible congruence covers for locally
symmetric varieties of type $E_7$ with congruence group also of type
$E_7$.

The main technical tools and references are as follows:
\begin{enumerate}
\item Results from SGA3~\cite{SGA3-3} used to control the reduction
  modulo $p$ of the subgroup scheme $\mathbf{U}(F)\leq \mathbf{G}$
  associated to the group $U(F)$ of Theorem~\ref{t.main-pure}.
\item A well-known approximation result for number fields, Proposition
  \ref{GWtors} below.
\item A theorem of Prasad and Rapinchuk on producing isotropic groups
with specific behavior at a set of primes~\cite{pr06}.
\end{enumerate}

\subsection{General notation}

Suppose $\mathbf{H}$ is an algebraic group over $\mathbb{Q}$, and let
$\af:=\mathbb{A}_{\mathbb{Q}}$ denote the finite rational adeles.  If
$p$ is a prime number, we write $\afp$ for the prime to $p$ adeles. So
$\af=\afp\times\mathbb{Q}_p$.

If $K$ is a compact open subgroup of $\mathbf{H}(\af)$ we set
$\Gamma_K:=K\cap\mathbf{H}(\mathbb{Q})$ and
$\Gamma_K^+=K\cap\mathbf{H}(\mathbb{Q})_+$.  We say that $K$ is neat
if it is neat in the sense of Pink~\cite{pink}.  For $K$ neat,
$\Gamma_K$ is neat as well.

We say that $\mathbf{H}$ has \emph{strong approximation} if $\mathbf{H}(\mathbb{Q})$ is dense in $\mathbf{H}(\af)$.   This obviously implies that 
$\mathbf{H}(\mathbb{Q}) K=\mathbf{H}(\af)$ for any compact open subgroup
$K\leq \mathbf{H}(\af)$.    

\begin{proposition}\label{p.ca}
  Suppose that $\mathbf{H}$ is either 
\begin{enumerate}
\item[(a)] a simply connected semisimple
  algebraic group over $\mathbb{Q}$ without compact $\mathbb{Q}$-simple
factors, or
\item[(b)]  a Cartesian power
  $\mathbb{G}_a^n$ of the additive group.
\end{enumerate}
Then 
\begin{enumerate}
\item the Lie group $\mathbf{H}(\mathbb{R})$ is connected;
\item $\mathbf{H}$ has strong approximation.  
\end{enumerate}
\end{proposition}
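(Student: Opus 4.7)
The proof splits into the two independent cases (a) and (b), both of which can be handled by appealing to standard material once one recognizes what to invoke.

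Case (b) is essentially trivial and I would dispose of it first. The Lie group $\mathbb{G}_a^n(\mathbb{R}) = \mathbb{R}^n$ is connected, and strong approximation reduces coordinatewise to density of $\mathbb{Q}$ in $\af$. The latter is a direct consequence of the Chinese Remainder Theorem: a basic open neighborhood of an adele $(x_p)_p \in \af$ is specified by a finite set $S$ of primes, $p$-adic accuracies $n_p$ for $p \in S$, and the constraint of integrality at primes outside $S$; CRT produces an integer $m$ with $m \equiv x_p \pmod{p^{n_p}}$ for $p \in S$, and $m$ lies in $\mathbb{Z}_p$ for every prime.

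For case (a), part (1) is the classical theorem that a simply connected semisimple algebraic group over $\mathbb{R}$ has connected real points. I would simply cite this, for example in the form of Platonov--Rapinchuk \emph{Algebraic Groups and Number Theory}, Theorem~7.6, or Borel, \emph{Linear Algebraic Groups}, Theorem~24.6(c). (The underlying reason is that $\mathbf{H}(\mathbb{R})$ is generated by one-parameter unipotent subgroups, each of which is a connected $\mathbb{R}$-line lying in the identity component.)

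Part (2) of case (a) is the deep Kneser--Platonov strong approximation theorem. I would invoke it in the form stated, e.g., as Platonov--Rapinchuk Theorem~7.12: for a simply connected semisimple $\mathbb{Q}$-group $\mathbf{H}$, $\mathbf{H}(\mathbb{Q})$ is dense in $\mathbf{H}(\af)$ if and only if every $\mathbb{Q}$-simple factor $\mathbf{H}_i$ of $\mathbf{H}$ satisfies $\mathbf{H}_i(\mathbb{R})$ non-compact, which is precisely our hypothesis. Concretely, decomposing $\mathbf{H}$ into its $\mathbb{Q}$-simple factors $\mathbf{H}_i = \Res_{F_i/\mathbb{Q}} \mathbf{H}_i'$ for $\mathbf{H}_i'$ absolutely almost simple over a number field $F_i$, the assumption rules out the case where all archimedean completions of $F_i$ give compact $\mathbf{H}_i'(F_{i,v})$, which is the only obstruction. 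The only ``main obstacle'' here is the strong approximation theorem itself, but since it is a well-established classical result we do not reprove it. Assembling these citations yields the proposition.
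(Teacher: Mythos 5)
Your proposal is correct and follows the same route as the paper: case (b) reduces to the classical density of $\mathbb{Q}$ in $\af$ (the $n=1$ case), and case (a) is handled by citing Cartan's theorem on connectedness of real points of simply connected semisimple groups together with the Kneser--Platonov strong approximation theorem, exactly the references the paper uses (Platonov--Rapinchuk Proposition~7.6/Theorem~7.12 and Borel--Tits). One small caveat: your parenthetical heuristic for part (1)---generation by one-parameter unipotent subgroups---only works when $\mathbf{H}_{\mathbb{R}}$ is isotropic, which the hypothesis ``no compact $\mathbb{Q}$-simple factors'' does not guarantee (a $\mathbb{Q}$-simple factor can have some anisotropic $\mathbb{R}$-simple factors), so the cited theorem rather than that heuristic is what carries the argument.
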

\begin{proof}
  Assertion (1) is obvious in case (b) and assertion (2) follows from
  the case $n=1$, which is the usual strong approximation for the
  adeles.  In case (a), strong approximation is proven
  in~\cite[Theorem 7.12]{PR} and assertion (1) is due to E.~Cartan.
  See~\cite[Corollaire 4.7]{BorelTitsComp} or~\cite[Proposition
  7.6]{PR}.
\end{proof}

\subsection{Smooth \texorpdfstring{$\mathbb{Z}_p$}{Zp}-models and principal 
\texorpdfstring{$p$}{p}-pairs}
\label{s.prinp} Suppose $\mathbf{H}$ is an algebraic group of
$\mathbb{Q}$.  A smooth $\mathbb{Z}_p$-model of $\mathbf{H}$ is a
smooth scheme $\mathbf{H}_x$ over $\mathbb{Z}_p$ together with an
isomorphism
$\mathbf{H}_{\mathbb{Q}_p}\cong
\mathbf{H}_x\times_{\mathbb{Z}_p}\mathbb{Q}_p$.  If $\mathbf{H}$ is
reductive (resp. semisimple), then a reductive (resp. semisimple)
$\mathbb{Z}_p$-model is a smooth model which is also a reductive
(resp. semisimple) group scheme over $\mathbb{Z}_p$.

Given a smooth $\mathbb{Z}_p$-model $\mathbf{H}_x$, suppose $K^p\leq\mathbf{H}(\afp)$
is a compact open subgroup.   Define compact open subgroups $K_p$ and $L_p$
of $\mathbf{H}(\mathbb{Q}_p)$ by setting
\begin{align*}
K_p&=\mathbf{H}_x(\mathbb{Z}_p),  \\
L_p&=\ker(\mathbf{H}_x(\mathbb{Z}_p)\to \mathbf{H}_x(\mathbb{F}_p)).
\end{align*}
Then set $K=K^p\times K_p$ and $L=K^p\times L_p$.  Clearly, $L\unlhd K$. 
Moreover, since $\mathbf{H}_x$ is smooth, reduction mod $p$ induces an
isomorphism
\begin{equation}\label{e.smooth}
K/L\cong \mathbf{H}_x(\mathbb{F}_p).
\end{equation}
For future reference, we call $(K,L)$ the \emph{principal $p$-pair} arising
from the smooth $\mathbb{Z}_p$-model $\mathbf{H}_x$ and the compact open
subgroup $K^p\leq\mathbf{H}(\afp)$.  

\begin{remark}\label{r.neat}
Given $\mathbf{H}_x$, it is easy to see that there exists an open neighborhood
$V$ of the identity in $\mathbf{H}(\afp)$ such that $K$ is neat as long 
as $K^p\subseteq V$.   In particular, as long as $\mathbf{H}$ has a smooth $\mathbb{Z}_p$-model, there exists principal $p$-pairs $(K,L)$ with $K$ neat. 
\end{remark}

\begin{proposition}\label{p.co2dis}
  Suppose $\mathbf{H}$ is an algebraic group over $\mathbb{Q}$ with strong 
approximation and with $\mathbf{H}(\mathbb{R})$ connected.  Let 
$(K,L)$ be a principal $p$-pair arising from $\mathbf{H}_x$ and $K^p$ as above. 
Then inclusion induces an isomorphism of groups
$\Gamma_K/\Gamma_L\cong K/L$. 
\end{proposition}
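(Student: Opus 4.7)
The plan is to prove that the natural homomorphism $\varphi\colon\Gamma_K\to K/L$, obtained by restricting the quotient map $K\to K/L$ to $\Gamma_K\subset K$, is surjective with kernel $\Gamma_L$. Injectivity of the induced map $\bar\varphi\colon\Gamma_K/\Gamma_L\to K/L$ is immediate: since $L\subseteq K$, we have
\[
\ker(\varphi)=\Gamma_K\cap L=\bigl(K\cap\mathbf{H}(\mathbb{Q})\bigr)\cap L=L\cap\mathbf{H}(\mathbb{Q})=\Gamma_L.
\]

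For surjectivity I would argue by strong approximation. The key preliminary step is to check that $L$ is an open subgroup of $\mathbf{H}(\af)$. Since $\mathbf{H}_x$ is smooth of finite type over $\mathbb{Z}_p$, its special fibre is of finite type over $\mathbb{F}_p$, so $\mathbf{H}_x(\mathbb{F}_p)$ is a finite set; hence $L_p$ has finite index in the compact open subgroup $\mathbf{H}_x(\mathbb{Z}_p)\subset\mathbf{H}(\mathbb{Q}_p)$ and is therefore open in $\mathbf{H}(\mathbb{Q}_p)$. Combined with the openness of $K^p$ in $\mathbf{H}(\afp)$, this shows $L=K^p\times L_p$ is open in $\mathbf{H}(\af)$. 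Given any $k\in K$, the coset $kL$ is then a nonempty open subset of $\mathbf{H}(\af)$ contained in $K$. By the hypothesis of strong approximation, $\mathbf{H}(\mathbb{Q})$ is dense in $\mathbf{H}(\af)$, so we may choose $\gamma\in\mathbf{H}(\mathbb{Q})\cap kL$. Since $kL\subset K$, such a $\gamma$ lies in $\Gamma_K$ and satisfies $\bar\varphi(\gamma\Gamma_L)=kL$, establishing surjectivity.

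There is no real obstacle here: once $L$ is recognized as open in $\mathbf{H}(\af)$, the proposition is a direct consequence of the density of $\mathbf{H}(\mathbb{Q})$ in $\mathbf{H}(\af)$. (The connectedness of $\mathbf{H}(\mathbb{R})$ does not enter the present argument; it is listed in the hypotheses because Proposition~\ref{p.ca} supplies both properties simultaneously and connectedness is used elsewhere, e.g.\ to identify $\Gamma_K$ with $\Gamma_K^+$.)
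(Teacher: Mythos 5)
Your proposal is correct and takes essentially the same approach as the paper's: both deduce surjectivity from strong approximation by writing any $k\in K$ as an element of $\mathbf{H}(\mathbb{Q})L$, and injectivity is immediate from $\Gamma_L = L\cap\mathbf{H}(\mathbb{Q})$. You simply unpack the step $\mathbf{H}(\mathbb{Q})L=\mathbf{H}(\af)$ by verifying directly that $L$ is open (which the paper takes for granted when it calls $L_p$ compact open), and your side remark that connectedness of $\mathbf{H}(\mathbb{R})$ is not used in this particular proof is accurate.
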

\begin{proof}
The homomorphism $\Gamma_K/\Gamma_L\to K/L$ is obviously one-one by the
definition of $\Gamma_L$.   Since $\mathbf{H}$ has strong approximation,
$\mathbf{H}(\mathbb{Q})L=\mathbf{H}(\af)$.  So, for any 
$k\in K$, we can find 
$\ell\in L$ and $h\in\mathbf{H}(\mathbb{Q})$ such that 
$k=h\ell$.  Then $h=\ell k^{-1}\in H(\mathbb{Q})\cap K=\Gamma_K$.
It follows that $\Gamma_K/\Gamma_L\to K/L$ is onto.  
\end{proof}

If $\mathbf{H}$ is a semisimple group, then $\mathbf{H}$ has a
semisimple $\mathbb{Z}_p$-model if and only if the
$\mathbb{Q}_p$-group $\mathbf{H}_{\mathbb{Q}_p}$ is quasisplit and
split over an unramified extension of $\mathbb{Q}_p$~\cite{TitsCorv}.
If these two conditions hold, then $\mathbf{H}_{\mathbb{Q}_p}$ is
called an \emph{unramified group} and we say that $\mathbf{H}$ is
\emph{unramified at $p$}.  Isomorphism classes of semisimple
$\mathbb{Z}_p$-models of $\mathbf{H}$ are then in one-one
correspondence hyperspecial points $x$ in the Bruhat--Tits building of
$\mathbf{H}_{\mathbb{Q}_p}$.  This motivates our notation for smooth
models.

\subsection{Congruence subgroups for simply connected
  groups}\label{s.csub}

For the rest of this section we fix a simply connected group
$\mathbf{G}$ over $\mathbb{Q}$ of Hermitian type (as in \S\ref{lss}).
We write $\rho:\mathbf{G}\to\mathbf{G}^{\ad}$ for the canonical homomorphism
to the adjoint group, which we call the \emph{adjoint homomorphism}. 

For $K\leq \mathbf{G}(\mathbb{A}^{f,p})$ a compact open subgroup, we
set $\Gamma_K^{\ad}=\rho(\Gamma_K)$.  By Proposition~\ref{p.ca},
$\mathbf{G}(\mathbb{R})$ is connected.  So $\Gamma_K^{\ad}$ is an
arithmetic subgroup of $\mathbf{G}^{\ad}(\mathbb{Q})_+$.  Analogously
to \S\ref{lss}, we set $M_K =\Gamma_K^{\ad}\backslash D$.
\begin{lemma}\label{l.nqts}
  Suppose $L$ and $K$ are compact open subgroups of $\mathbf{G}(\afp)$ with 
$L\unlhd K$ and with $K$ neat.  Then 
\begin{enumerate}
\item The adjoint homomorphism $\rho$ induces an isomorphism
$\Gamma_K\to \Gamma_K^{\ad}$.
\item We have 
$$
\Gamma_{K}^{\ad}/\Gamma_L^{\ad}\cong \Gamma_K/\Gamma_L\cong K/L.
$$
\item The natural morphism $M_L \to M_K$ is a finite \'etale Galois
  cover with Galois group $K/L$.
\end{enumerate}
\end{lemma}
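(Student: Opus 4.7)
The plan is to handle the three parts in order, each being a short application of results established earlier in the paper.

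\textbf{Part (1).} The kernel of $\rho: \mathbf{G} \to \mathbf{G}^{\ad}$ is the (finite) center $Z$ of the simply connected semisimple group $\mathbf{G}$, so every element of $Z(\mathbb{Q})$ is of finite order. Because $K$ is neat, the arithmetic subgroup $\Gamma_K$ is neat (cf.\ \S\ref{s.csub}), hence torsion-free, and therefore meets $Z(\mathbb{Q})$ trivially. This gives injectivity of $\rho|_{\Gamma_K}$, while surjectivity onto $\Gamma_K^{\ad}$ holds by definition.

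\textbf{Part (2).} I would reduce to Proposition~\ref{p.co2dis} applied to $\mathbf{H}=\mathbf{G}$. Its hypotheses follow from Proposition~\ref{p.ca}(a), whose conditions I would verify as follows: $\mathbf{G}$ is simply connected semisimple by assumption, and the Hermitian type hypothesis combined with Remark~\ref{r.cpt} guarantees that all $\mathbb{Q}$-simple factors of $\mathbf{G}^{\ad}$, and hence of $\mathbf{G}$, are non-compact at $\infty$. Thus $\mathbf{G}(\mathbb{R})$ is connected and $\mathbf{G}$ has strong approximation, so Proposition~\ref{p.co2dis} yields $\Gamma_K/\Gamma_L \cong K/L$. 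Composing with the isomorphism $\Gamma_K\xrightarrow{\sim}\Gamma_K^{\ad}$ from Part~(1), together with its analogue for $L$ (which is also neat since $L\leq K$), yields the chain $\Gamma_K^{\ad}/\Gamma_L^{\ad}\cong \Gamma_K/\Gamma_L\cong K/L$.

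\textbf{Part (3).} By Part~(1), the torsion-freeness of $\Gamma_K$ transfers to $\Gamma_K^{\ad}$, so $\Gamma_K^{\ad}$ acts freely and properly discontinuously on $D$. The tower $D\to M_L\to M_K$ therefore presents $M_L\to M_K$ as a Galois \'etale cover of complex manifolds with deck group $\Gamma_K^{\ad}/\Gamma_L^{\ad}$; finiteness is inherited from $[K:L]<\infty$, and Part~(2) identifies the Galois group with $K/L$. Algebraicity of the cover (and hence that it is finite \'etale in the sense of schemes) follows from Borel's extension theorem applied to the neat lattice $\Gamma_K^{\ad}$, exactly as in the discussion of $M_\Gamma$ in \S\ref{lss}.

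The proof is essentially bookkeeping; the only point requiring mild care is confirming that the hypotheses of Proposition~\ref{p.ca}(a) really do hold, i.e., that the Hermitian type hypothesis on $\mathbf{G}$ (defined via $\mathbf{G}^{\ad}$) in fact forbids compact $\mathbb{Q}$-simple factors of $\mathbf{G}$. This is immediate from Remark~\ref{r.cpt} once one observes that compactness of a $\mathbb{Q}$-simple factor of $\mathbf{G}(\mathbb{R})$ is preserved under the central isogeny to $\mathbf{G}^{\ad}(\mathbb{R})$.
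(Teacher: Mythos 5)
Your proposal is correct and follows essentially the same route as the paper: Part (1) via neatness (hence torsion-freeness of $\Gamma_K$, hence trivial intersection with the finite center), Part (2) by citing Proposition~\ref{p.co2dis} and transporting through the isomorphism of Part (1), and Part (3) by observing that $\Gamma_K^{\ad}$ is torsion-free so that the cover is \'etale with Galois group $K/L$. You spell out in more detail why the hypotheses of Proposition~\ref{p.ca}(a) hold, but the underlying argument is the one the paper uses.
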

\begin{proof}
  (1) Since $K$ is neat, $K$ does not meet the center of $\mathbf{G}$.
It follows that $\rho_{|\Gamma_K}:\Gamma_K\to \Gamma_K^{\ad}$ is one-one.
But this homomorphism is onto by the definition of $\Gamma_K^{\ad}$. 
\smallskip

(2) This 
follows from (1) and Proposition~\ref{p.co2dis}. 
\smallskip

(3) Since $K$ is neat, $\Gamma_K$ is torsion free.  So, by (1),
$\Gamma_K^{\ad}$ is torsion free as well.  It follows that 
$M_L \to M_K$ is \'etale and Galois with Galois group  
$\Gamma_K^{\ad}/\Gamma_L^{\ad}=K/L$. 
\end{proof}

\begin{corollary}\label{c.nqts}
  Suppose $\mathbf{G}$ is unramified at a prime $p$, 
$x$ is a hyperspecial point of the Bruhat--Tits building of
$\mathbf{G}_{\mathbb{Q}_p}$, $K^p$ is a compact open subgroup of 
$\mathbf{G}(\af)$ and $(K,L)$ is the principal $p$-pair arising 
from this data.  Assume that $K$ is neat.  Then $M_L \arr M_K$
is an \'etale  Galois cover with Galois group $\mathbf{G}_x(\mathbb{F}_p)$.  
\end{corollary}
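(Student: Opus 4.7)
The plan is to derive Corollary~\ref{c.nqts} as a direct specialization of Lemma~\ref{l.nqts}(3), combined with the identification \eqref{e.smooth} attached to the smooth $\mathbb{Z}_p$-model $\mathbf{G}_x$. First I would check the hypotheses of Lemma~\ref{l.nqts} for the principal $p$-pair $(K,L)$. The subgroup $L=K^p\times L_p$ is normal in $K=K^p\times K_p$ because $L_p=\ker(\mathbf{G}_x(\mathbb{Z}_p)\to \mathbf{G}_x(\mathbb{F}_p))$ is normal in $K_p=\mathbf{G}_x(\mathbb{Z}_p)$ as the kernel of a group homomorphism; neatness of $K$ is given. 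The background hypotheses used inside Lemma~\ref{l.nqts} (via Proposition~\ref{p.co2dis}) are that $\mathbf{G}$ has strong approximation and $\mathbf{G}(\mathbb{R})$ is connected; these hold by Proposition~\ref{p.ca}, since $\mathbf{G}$ is simply connected, semisimple, and of Hermitian type, and hence (by Remark~\ref{r.cpt}) has no compact $\mathbb{Q}$-simple factors.

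With these verifications, Lemma~\ref{l.nqts}(3) immediately produces a finite étale Galois cover $M_L\to M_K$ with Galois group $K/L$. To finish, I would identify $K/L$ with $\mathbf{G}_x(\mathbb{F}_p)$. Since $\mathbf{G}$ is unramified at $p$ and $x$ is hyperspecial, $\mathbf{G}_x$ is a semisimple, hence smooth, group scheme over $\mathbb{Z}_p$. By Hensel's lemma the reduction $K_p=\mathbf{G}_x(\mathbb{Z}_p)\twoheadrightarrow \mathbf{G}_x(\mathbb{F}_p)$ is surjective with kernel $L_p$, so the prime-to-$p$ factor cancels in the quotient and one obtains the isomorphism \eqref{e.smooth}:
\[
K/L \;\cong\; K_p/L_p \;\cong\; \mathbf{G}_x(\mathbb{F}_p).
\]

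Since everything reduces to the already-proved Lemma~\ref{l.nqts}(3) and to the smoothness input \eqref{e.smooth} built into the definition of a principal $p$-pair, there is no real obstacle: Corollary~\ref{c.nqts} is a bookkeeping assembly of those two facts, with the substantive content having been packaged into the setup of \S\ref{s.prinp} and \S\ref{s.csub}.
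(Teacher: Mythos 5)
Your proposal is correct and follows essentially the same route as the paper: apply Lemma~\ref{l.nqts}(3) to get a finite \'etale Galois cover with group $K/L$, then use the identification $K/L\cong\mathbf{G}_x(\mathbb{F}_p)$ from \eqref{e.smooth}. You are more explicit than the paper in spelling out that the hypotheses of Lemma~\ref{l.nqts} and of Proposition~\ref{p.ca} are met (normality of $L$ in $K$, strong approximation, connectedness of $\mathbf{G}(\mathbb{R})$), but these verifications are exactly the ones the paper takes for granted from its setup in \S\ref{s.prinp}--\S\ref{s.csub}.
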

\begin{proof}
  By Lemma~\ref{l.nqts}, $M_L \arr M_K$ is \'etale and Galois
  with Galois group $K/L$.  But, under the hypotheses,
  $K/L\cong\mathbf{G}_x(\mathbb{F}_p)$.
\end{proof}

\subsection{Boundary components and reduction modulo 
\texorpdfstring{$p$}{p}}
Now fix a rational boundary component $F$ of $D$ and let $N(F)$,
$W(F)$, $U(F)$ be as before.  Write $\mathbf{N}^{\ad}(F)$ (resp.
$\mathbf{W}^{\ad}(F), \mathbf{U}^{\ad}(F)$) for the corresponding
algebraic subgroups of $\mathbf{G}^{\ad}$, and write $\mathbf{N}(F)$
for the inverse image of $\mathbf{N}^{\ad}(F)$ in $\mathbf{G}$, a
parabolic subgroup.  Write $\mathbf{W}(F)$ for the unipotent radical
of $\mathbf{N}(F)$ and $\mathbf{U}(F)$ for the center of
$\mathbf{W}(F)$.  Then $\rho$ induces a isomorphisms
$\mathbf{W}(F)\cong \mathbf{W}^{\ad}(F)$
(resp. $\mathbf{U}(F)\cong\mathbf{U}^{\ad}(F)$.  Moreover, the group
of real points of $\mathbf{W}(F)$ (resp. $\mathbf{U}(F)$) is
isomorphic to the Lie group $W(F)$ (resp. $U(F)$).
 
Since $F$ will be fixed in this section, we allow
ourselves to drop it from the notation writing, for example, $\mathbf{N}$
instead of $\mathbf{N}(F)$.  

Suppose further that $\mathbf{G}$ is unramified and fix a hyperspecial
point $x$ giving us a group scheme $\mathbf{G}_x$ as above in
\S\ref{s.prinp}.  As in~\cite[Expos\'e XXVI]{SGA3-3}, write
$\Par\mathbf{G}_x$ for the $\mathbb{Z}_p$-scheme representing the
functor of parabolic subgroup schemes of $\mathbf{G}_x$.  Then
$\Par\mathbf{G}_x$ is proper over $\mathbb{Z}_p$.  So it follows that
the parabolic subgroup $\mathbf{N}$ extends uniquely to a parabolic
subgroup scheme $\mathbf{N}_x$ of $\mathbf{G}_x$.

Write $\mathbf{W}_x$ for the unipotent radical of 
$\mathbf{N}_x$~\cite[XXII.5.11.4]{SGA3-3}.  This is a 
closed subgroup scheme of $\mathbf{N}_x$ whose geometric
fibers are connected and unipotent with the property that 
$\mathbf{N}_x/\mathbf{W}_x$ is reductive.  In particular,
it is an extension to $\mathbb{Z}_p$ of $\mathbf{W}$.

\begin{proposition}\label{p.fromsga}
  The subgroup $\mathbf{U}$ of $\mathbf{W}$ extends to a smooth
  central closed subgroup scheme $\mathbf{U}_x$ of $\mathbf{W}_x$.
  Moreover, if we set $\mathbf{V}_x=\mathbf{W}_x/\mathbf{U}_x$ we have
\begin{align*}
\mathbf{U}_x&\cong \mathbb{G}_a^{r_U};\\
\mathbf{V}_x&\cong \mathbb{G}_a^{r_V}
\end{align*}
with  $r_U=\dim\mathbf{U}$ and $r_V=\dim \mathbf{W}/\mathbf{U}$.
\end{proposition}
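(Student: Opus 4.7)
The plan is to combine the structure theory of unipotent radicals of parabolic subgroup schemes of reductive groups from \cite[Exp.~XXVI]{SGA3-3} with the classical result (Piatetski--Shapiro, see \cite[III \S 4]{amrt}) that the unipotent radical $W(F) = \mathbf{W}(\mathbb{R})$ of the normalizer of a boundary component of a Hermitian symmetric domain is $2$-step nilpotent with centre $U(F) = \mathbf{U}(\mathbb{R})$.

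First I would observe that, since $\mathbf{G}_x$ is reductive over the henselian local base $\spec \mathbb{Z}_p$ and $\mathbf{N}_x$ is a parabolic subgroup scheme, the results of \cite[Exp.~XXVI]{SGA3-3} yield, \'etale-locally on $\spec \mathbb{Z}_p$, a cocharacter $\lambda : \mathbb{G}_m \to \mathbf{N}_x$ whose dynamic parabolic equals $\mathbf{N}_x$ and whose centralizer is a Levi $\mathbf{L}_x \subset \mathbf{N}_x$. (Passing to $\mathbb{Z}_p^{\mathrm{sh}}$ is harmless since everything constructed below is canonical and hence descends.) The weight decomposition $\Lie(\mathbf{G}_x) = \bigoplus_i \Lie(\mathbf{G}_x)_i$ identifies $\Lie(\mathbf{W}_x)$ with the sum of the positive weight spaces and yields a canonical descending central filtration
\[
  \mathbf{W}_x = \mathbf{W}_x^{\geq 1} \supseteq \mathbf{W}_x^{\geq 2} \supseteq \cdots
\]
by smooth closed normal subgroup schemes, with each successive quotient $\mathbf{W}_x^{\geq i}/\mathbf{W}_x^{\geq i+1}$ a vector bundle whose Lie algebra is $\Lie(\mathbf{G}_x)_i$.

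Next I would invoke the $2$-step structure of $\mathbf{W}$ on the generic fibre. In the weight-grading language this says that the only weights of $\lambda$ on $\Lie(\mathbf{W})$ are $1$ and $2$, with $\Lie(\mathbf{U})$ equal to the weight-$2$ part (automatically central since any nontrivial bracket has weight $\geq 3$). Because $\lambda$ is integral the weights on $\Lie(\mathbf{W}_x)$ are the same set $\{1,2\}$, so $\mathbf{W}_x^{\geq 3} = 1$ and the filtration has exactly two nontrivial steps. Setting $\mathbf{U}_x := \mathbf{W}_x^{\geq 2}$ and $\mathbf{V}_x := \mathbf{W}_x/\mathbf{U}_x$ then produces a smooth central closed subgroup scheme of $\mathbf{W}_x$ with generic fibre $\mathbf{U}$, together with its quotient. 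The SGA3 structure theorem gives that each is a vector bundle on $\spec \mathbb{Z}_p$, and since $\mathbb{Z}_p$ is local every such vector bundle is trivial, whence $\mathbf{U}_x \cong \mathbb{G}_a^{r_U}$ and $\mathbf{V}_x \cong \mathbb{G}_a^{r_V}$ with the stated ranks.

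The main obstacle is ensuring the integral existence of the cocharacter $\lambda$ (equivalently, an integral Levi in $\mathbf{N}_x$) and confirming that the Piatetski--Shapiro/AMRT $2$-step conclusion truncates the SGA3 filtration in the expected way. Both points reduce to routine descent arguments using that $\spec \mathbb{Z}_p$ is henselian local and that the centre of $\mathbf{W}_x$ is a canonical object; an alternative is to carry out the whole construction over $\mathbb{Z}_p^{\mathrm{sh}}$ and then descend via the canonical characterisation of $\mathbf{U}_x$ as the centre of $\mathbf{W}_x$.
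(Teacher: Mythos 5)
Your proposal is correct and follows essentially the same route as the paper: both rest on (i) the AMRT two-step nilpotent structure of $\mathbf{W}$ with abelian centre $\mathbf{U}$, (ii) the filtration of the unipotent radical of a parabolic subgroup scheme from SGA3, Exp.~XXVI, whose graded pieces are vector bundles, and (iii) triviality of vector bundles over $\operatorname{Spec}\mathbb{Z}_p$. The only difference is one of presentation: the paper compresses the middle step into citing \cite[Exp.~XXVI, Prop.~2.1]{SGA3-3} and ``examining the proof,'' whereas you unfold that citation by making the dynamic cocharacter and its weight grading on $\Lie(\mathbf{G}_x)$ explicit, which is exactly the mechanism behind the SGA3 filtration.
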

\begin{proof}
First note that, we have an exact sequence of unipotent algebraic groups
over $\mathbb{Q}$, 
\begin{equation}
\label{e.UandV}
1\to\mathbf{U}\to \mathbf{W}\to \mathbf{V}\to 1
\end{equation}
where both $\mathbf{U}$ and $\mathbf{V}$ are abelian. 
Moreover, the Lie algebras of $\mathbf{U}$ and $\mathbf{W}$ are defined
in terms of root spaces relative to a suitable maximal 
$\mathbb{R}$-split torus of $\mathbf{G}$. (See~\cite[p.~143]{amrt}.)

Now, from~\cite[Expos\'e 26, Proposition 2.1]{SGA3-3}, it follows 
that $\mathbf{W}_x$ admits a finite filtration by closed subgroup schemes
\begin{equation}\label{sga3-26.2.1}
\mathbf{W}_0=\mathbf{W}_x \supseteq \mathbf{W}_1\supseteq\mathbf{W}_2 
\supseteq \cdots \supseteq \mathbf{W}_n=\{1\}
\end{equation}
where the quotients $\mathbf{W}_i/\mathbf{W}_{i+1}$ are 
group schemes associated to vector bundles on $\mathbb{Z}_p$.  
So, since any vector bundle on $\mathbb{Z}_p$ is trivial,
for each $i$, we have $\mathbf{W}_i/\mathbf{W}_{i+1}\cong \mathbb{G}_a^{r_i}$ for
some  nonnegative
integer $r_i$.

In fact, when $\mathbf{G}$ is pinned the vector bundles are root spaces
of $\mathbf{G}$ and, in the general case, the result is deduced by descent.
Examining the proof, one sees that, given $\mathbf{W}_x$ as above,
there are at most two nontrivial vector bundles involved and  that $\mathbf{U}_x$ is 
a central subgroup scheme of $\mathbf{W}_x$ with 
$\mathbf{U}_x\otimes_{\mathbb{Z}_p}\mathbb{Q}_p=\mathbf{U}$.  The proposition
then follows easily.  
\end{proof}

\begin{theorem}\label{t.Hgp}
  Suppose $\mathbf{G}$ is unramified at $p$ and $(K,L)$ is the principal
$p$ pair associated to a smooth $\mathbb{Z}_p$-model $\mathbf{G}_x$ and a
compact open subgroup $K^p\leq\mathbf{G}(\afp)$.  Set 
$$
\mathbb{U}:=\frac{\Gamma_K\cap\mathbf{U}(\mathbb{Q})}{\Gamma_L\cap\mathbf{U}(\mathbb{Q})}
\leq \frac{\Gamma_K}{\Gamma_L}.
$$
Then $\mathbb{U}$ is an $\mathbb{F}_p$-vector space with 
$$
\dim_{\mathbb{F}_p} \mathbb{U} = r_U.
$$
\end{theorem}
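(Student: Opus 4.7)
The plan is to reduce the theorem to Proposition~\ref{p.co2dis} applied to the abelian unipotent group $\mathbf{U}$ itself, with its smooth $\mathbb{Z}_p$-model $\mathbf{U}_x$ supplied by Proposition~\ref{p.fromsga}.

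First I will observe that, since its smooth $\mathbb{Z}_p$-model $\mathbf{U}_x$ is isomorphic to $\mathbb{G}_a^{r_U}$, the group $\mathbf{U}$ is itself a $\mathbb{Q}$-vector group of dimension $r_U$; by Proposition~\ref{p.ca}(b) this guarantees that $\mathbf{U}$ has strong approximation and that $\mathbf{U}(\mathbb{R})$ is connected. Next I will intersect $K$ and $L$ with $\mathbf{U}(\af)$, writing $K_{\mathbf{U}} := K \cap \mathbf{U}(\af)$ and $L_{\mathbf{U}} := L \cap \mathbf{U}(\af)$. The closed immersion $\mathbf{U}_x \hookrightarrow \mathbf{G}_x$ of flat $\mathbb{Z}_p$-group schemes yields $\mathbf{U}_x(\mathbb{Z}_p) = \mathbf{G}_x(\mathbb{Z}_p) \cap \mathbf{U}(\mathbb{Q}_p)$ inside $\mathbf{G}(\mathbb{Q}_p)$, so
\[
K_{\mathbf{U}} = \bigl(K^p \cap \mathbf{U}(\afp)\bigr) \times \mathbf{U}_x(\mathbb{Z}_p), \qquad L_{\mathbf{U}} = \bigl(K^p \cap \mathbf{U}(\afp)\bigr) \times \ker\bigl(\mathbf{U}_x(\mathbb{Z}_p) \to \mathbf{U}_x(\mathbb{F}_p)\bigr).
\]
In other words, $(K_{\mathbf{U}}, L_{\mathbf{U}})$ is precisely the principal $p$-pair for $\mathbf{U}$ attached to $\mathbf{U}_x$ and the compact open subgroup $K^p \cap \mathbf{U}(\afp) \leq \mathbf{U}(\afp)$, and the smoothness isomorphism~\eqref{e.smooth} gives $K_{\mathbf{U}}/L_{\mathbf{U}} \cong \mathbf{U}_x(\mathbb{F}_p) \cong \mathbb{F}_p^{r_U}$.

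I will then apply Proposition~\ref{p.co2dis} to $\mathbf{U}$, whose hypotheses are verified above, to obtain an isomorphism $\Gamma_{K_{\mathbf{U}}}/\Gamma_{L_{\mathbf{U}}} \xrightarrow{\sim} K_{\mathbf{U}}/L_{\mathbf{U}} \cong \mathbb{F}_p^{r_U}$, and identify the left-hand side with $\mathbb{U}$ via the tautology $\Gamma_{K_{\mathbf{U}}} = K \cap \mathbf{U}(\mathbb{Q}) = \Gamma_K \cap \mathbf{U}(\mathbb{Q})$ (and similarly for $L$). The only mildly delicate point will be the identification of the $p$-component of $K \cap \mathbf{U}(\af)$ with $\mathbf{U}_x(\mathbb{Z}_p)$, which rests on $\mathbf{U}_x$ being the schematic closure of $\mathbf{U}$ in $\mathbf{G}_x$; this is automatic since $\mathbf{U}_x$ is $\mathbb{Z}_p$-flat by Proposition~\ref{p.fromsga}. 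No other obstacle is expected.
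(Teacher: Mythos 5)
Your proposal is correct and follows essentially the same path as the paper's proof: intersect $K$ and $L$ with $\mathbf{U}(\mathbb{A}^f)$, identify the $p$-components via the closed subgroup scheme $\mathbf{U}_x\subset\mathbf{G}_x$ from Proposition~\ref{p.fromsga}, compute $K_U/L_U\cong\mathbf{U}_x(\mathbb{F}_p)\cong\mathbb{F}_p^{r_U}$, and apply Proposition~\ref{p.co2dis} to $\mathbf{U}$ (a vector group, so Proposition~\ref{p.ca}(b) supplies the needed strong approximation and connectedness). Your framing of $(K_U,L_U)$ as the principal $p$-pair for $\mathbf{U}$ is a small, pleasant clarification of what the paper computes directly, but it is not a genuinely different argument.
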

\begin{proof}
  Since $\mathbf{U}_x$ is a closed subgroup scheme of $\mathbf{G}_x$
  with generic fiber $\mathbf{U}$, Proposition~\ref{p.fromsga} implies
that 
  $\mathbf{U}(\mathbb{Q}_p)\cap K_p=\mathbf{U}(\mathbb{Q}_p)\cap\mathbf{G}_x(\mathbb{Z}_p)=\mathbf{U}_x(\mathbb{Z}_p)\cong \mathbb{Z}_p^{r_U}$.

Let $R_p:=\ker[\mathbf{U}_x(\mathbb{Z}_p)\to \mathbf{U}_x(\mathbb{F}_p)]$.  
Then, we have a commutative diagram of short exact sequences
$$
\begin{tikzcd}
  1\ar[r] & R_p \ar[r]\ar[d, hook] & \mathbf{U}_x(\mathbb{Z}_p)\ar[r]\ar[d, hook] 
          & \mathbf{U}_x(\mathbb{F}_p)\ar[r]\ar[d, hook] & 1\\
  1\ar[r] & L_p\ar[r]    & \mathbf{G}_x(\mathbb{Z}_p)\ar[r] 
          & \mathbf{G}_x(\mathbb{F}_p)\ar[r] & 1                         
\end{tikzcd}
$$
where the vertical arrows are monomorphisms and the nontrivial arrows
on the right are reduction module $p$. From this, it follows that
$R_p=L_p\cap\mathbf{U}_x(\mathbb{Z}_p)$.  But then, since
$L_p\leq K_p$ and
$K_p\cap\mathbf{U}(\mathbb{Q}_p)=\mathbf{U}_x(\mathbb{Z}_p)$, we have
$R_p=L_p\cap\mathbf{U}(\mathbb{Q}_p)$ as well.

 Set $K^p_U:=K^p\cap\mathbf{U}(\afp)$, 
$K_U=K\cap\mathbf{U}(\af)$ and $L_U=L\cap\mathbf{U}(\af)$.
Then $K^p_U$ (resp. $K_U, L_U$) is a compact
open subgroup of $\mathbf{U}(\afp)$ (resp. $\mathbf{U}(\af)$).  

Moreover, 
\begin{align*}
K_U&=(K^p\times K_p)\cap\mathbf{U}(\af)=
(K^p\times K_p)\cap (\mathbf{U}(\afp)\times\mathbf{U}(\mathbb{Q}_p))
\\
&=(K^p\cap\mathbf{U}(\afp))\times (K_p\cap\mathbf{U}(\mathbb{Q}_p))
=(K^p\cap\mathbf{U}(\afp)\times\mathbf{U}_x(\mathbb{Z}_p).  
\end{align*}

Similarly,
\begin{align*}
L_U&=(K^p\times L_p)\cap\mathbf{U}(\af)=
(K^p\cap\mathbf{U}(\afp))\times (L_p\cap\mathbf{U}(\mathbb{Q}_p))\\
&=
(K^p\cap\mathbf{U}(\afp))\times R_p.
\end{align*} 

It follows that $K_U/L_U\cong \mathbf{U}_x(\mathbb{Z}_p)/R_p\cong \mathbb{F}_p^{r_U}$.  

We have $\Gamma_{K_U}=\mathbf{U}(\mathbb{Q})\cap K\cap 
\mathbf{U}(\af)=\mathbf{G}(\mathbb{Q})\cap K\cap \mathbf{U}(\af)=
\Gamma_K\cap\mathbf{U}(\mathbb{Q})$.  And, similarly, 
$\Gamma_{L_U}=\Gamma_L\cap\mathbf{U}(\mathbb{Q})$.  

Now apply Proposition~\ref{p.co2dis} to deduce that 
$\mathbb{U}=\Gamma_{K_U}/\Gamma_{L_U}\cong K_U/L_U$. 
\end{proof}

\subsection{Hermitian pairs adapted to semisimple groups over
  \texorpdfstring{$\mathbb{F}_p$}{Fp}}

\label{s.cong}

Suppose $p$ is a prime number and $\mathbb{G}$ is a simply connected
semisimple group over $\mathbb{F}_p$ for some prime $p$.  Our goal is to find a simply connected group $\mathbf{G}$ of
Hermitian type with a $0$-cuspidal parabolic subgroup $\mathbf{P}$
such that
\begin{enumerate}
\item $\mathbf{G}$ is unramified at $p$.
\item For some, hence any, reductive model $\mathbf{G}_x$ of $\mathbf{G}$
over $\mathbb{Z}_p$, we have $\mathbf{G}_x\otimes_{\mathbb{Z}_p}\mathbb{F}_p\cong\mathbb{G}$.
\end{enumerate}

We will say that such a pair $(\mathbf{G}, \mathbf{P})$ is 
\emph{adapted} to $\mathbb{G}$.

\begin{proposition}
  Suppose $(\mathbf{G},\mathbf{P})$ is adapted to $\mathbb{G}$. Then
  there exists a neat principal $p$-pair $(K,L)$ in
  $\mathbf{G}(\mathbb{A}^f)$ with $\Gamma_K/\Gamma_L\cong \mathbb{G}$.
  Consequently, the cover 
\begin{equation}\label{ppcov}
  M_L\to M_K
\end{equation} is finite \'etale with Galois
group $\mathbb{G}(\mathbb{F}_p)$ and with 
\begin{equation}\label{edin}
\ed_{\mathbb{G}(\mathbb{F}_p)}M_L\geq \dim U(F)
\end{equation}
where $F$ is any $0$-dimensional boundary component.  If
$(\mathbf{G}, \mathbf{P})$ is of tube type then the cover
\eqref{ppcov} is $p$-incompressible.
\end{proposition}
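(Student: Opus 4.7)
The plan is to stitch together the machinery developed earlier in the paper, applied to the principal $p$-pair arising from the hyperspecial point $x$ that the adapted hypothesis provides. First I would invoke Remark~\ref{r.neat} to choose a small enough compact open subgroup $K^p \leq \mathbf{G}(\afp)$ so that the principal $p$-pair $(K,L) = (K^p \times \mathbf{G}_x(\mathbb{Z}_p),\ K^p \times \ker(\mathbf{G}_x(\mathbb{Z}_p) \to \mathbf{G}_x(\mathbb{F}_p)))$ is neat. Since $(\mathbf{G},\mathbf{P})$ is adapted to $\mathbb{G}$, the identification $\mathbf{G}_x \otimes_{\mathbb{Z}_p} \mathbb{F}_p \cong \mathbb{G}$ identifies $\mathbf{G}_x(\mathbb{F}_p)$ with $\mathbb{G}(\mathbb{F}_p)$, and by Corollary~\ref{c.nqts} the cover $M_L \to M_K$ is finite \'etale Galois with Galois group $\mathbb{G}(\mathbb{F}_p)$. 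This settles the assertions up through the structure of the cover.

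For the lower bound on essential dimension, I would fix any $0$-dimensional boundary component $F$ and apply Theorem~\ref{t.Hgp} to the pair $(K,L)$. This produces an elementary abelian $p$-subgroup
\[
\mathbb{U} \;=\; \frac{\Gamma_K \cap \mathbf{U}(\mathbb{Q})}{\Gamma_L \cap \mathbf{U}(\mathbb{Q})} \;\leq\; \Gamma_K/\Gamma_L \;\cong\; \mathbb{G}(\mathbb{F}_p)
\]
whose $\mathbb{F}_p$-dimension equals $r_U = \dim U(F)$. Next I would feed the neat arithmetic subgroups $\Gamma_K^{\ad} \unrhd \Gamma_L^{\ad}$ of $\mathbf{G}^{\ad}(\mathbb{Q})$ (which, by Lemma~\ref{l.nqts}(1), are canonically identified with $\Gamma_K$ and $\Gamma_L$) into Theorem~\ref{t.main-pure}. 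That theorem supplies a $\mathbb{U}$-fixed point on any smooth $\mathbb{U}$-equivariant compactification of $M_L$, and the fixed point method, Proposition~\ref{prop.fixed-point}(b), then yields
\[
\ed_{\mathbb{G}(\mathbb{F}_p)} M_L \;\geq\; \ed_{\mathbb{G}(\mathbb{F}_p)}(M_L;p) \;\geq\; \dim_{\mathbb{F}_p} \mathbb{U} \;=\; \dim U(F),
\]
giving the inequality \eqref{edin}.

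Finally, in the tube type case, Lemma~\ref{l.td} forces $\dim U(F) = \dim D = \dim M_L$ for the $0$-cuspidal boundary component $F$ attached to $\mathbf{P}$, so the bound above matches the dimension and upgrades to $p$-incompressibility. Alternatively, one can apply Theorem~\ref{thm.main2}(2) directly to $\Gamma_K^{\ad},\Gamma_L^{\ad}$ once the rank equality $\mathrm{rk}(\mathbb{U}) = \dim D$ has been verified via Theorem~\ref{t.Hgp}. There is no real obstacle here: all the genuine work has been carried out earlier, and the only points that need a little care are (i) arranging neatness of $K$ so that Corollary~\ref{c.nqts} and Theorem~\ref{t.main-pure} apply, and (ii) passing between the simply connected level, where the principal $p$-pair lives, and the adjoint level, where the locally symmetric variety is defined, which is handled by Lemma~\ref{l.nqts}.
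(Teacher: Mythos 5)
Your proposal is correct and follows essentially the same route as the paper's proof: use Remark~\ref{r.neat} to produce a neat principal $p$-pair, Corollary~\ref{c.nqts} (via the adapted hypothesis) to identify the Galois group, Theorem~\ref{t.Hgp} to compute $\dim_{\mathbb{F}_p}\mathbb{U}=\dim U(F)$, Theorem~\ref{t.main-pure} together with the fixed point method to obtain \eqref{edin}, and Lemma~\ref{l.td} or Theorem~\ref{thm.main2} for the tube type upgrade. You simply spell out a few bookkeeping steps (the passage between $\Gamma_K$ and $\Gamma_K^{\ad}$ via Lemma~\ref{l.nqts}, and the invocation of Proposition~\ref{prop.fixed-point}) that the paper's terser proof leaves implicit.
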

\begin{proof}
  Using Remark~\ref{r.neat}, we can find a neat principal $p$-pair
  $(K,L)$.  As in \S\ref{s.prinp}, this gives rise to the \'etale
  Galois cover \eqref{ppcov}.  Then, by Theorem~\ref{t.Hgp}, we have
  $\dim_{\mathbb{F}_p}\mathbb{U}=\dim U(F)$.  Then~\eqref{edin}
  follows from Theorem~\ref{t.main-pure}, and the incompressibility
  for tube domains follows from Lemma~\ref{l.td}, or directly from
  Theorem \ref{thm.main2}.
\end{proof}

\subsection{Reduction and Statement of Theorem} The following lemma
reduces the problem of finding an adapted pair
$(\mathbf{G}, \mathbf{P})$ to the case where $\mathbb{G}$ is almost
simple over $\mathbb{F}_p$.

\begin{lemma}\label{lem-prod}
  Suppose $(\mathbf{G}_i, \mathbf{P}_i)$ are adapted to $\mathbb{G}_i$
  for $i=1,2$.  Then
  $(\mathbf{G}_1\times\mathbf{G}_2,\mathbf{P}_1\times \mathbf{P}_2)$
  is adapted to $\mathbb{G}_1\times\mathbb{G}_2$.
\end{lemma}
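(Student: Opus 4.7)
The plan is to verify, one by one, each of the conditions in the definition of "adapted pair" from \S\ref{s.cong} for the product pair $(\mathbf{G}_1 \times \mathbf{G}_2, \mathbf{P}_1 \times \mathbf{P}_2)$, using the fact that each condition is preserved under direct products in the category of algebraic groups.

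First, I would check the group-theoretic conditions at the level of $\mathbb{Q}$. Simply connectedness and semisimplicity are preserved under direct products, so $\mathbf{G}_1 \times \mathbf{G}_2$ is simply connected semisimple. Since each $\mathbf{G}_i$ is of Hermitian type, each associated Lie group $G_i = \mathbf{G}_i^{\mathrm{ad}}(\mathbb{R})_+$ is the identity component of the biholomorphism group of a Hermitian symmetric domain $D_i$, and then $D_1 \times D_2$ is a Hermitian symmetric domain whose biholomorphism group has identity component $G_1 \times G_2$. Hence $\mathbf{G}_1 \times \mathbf{G}_2$ is of Hermitian type. A direct product of parabolic subgroups is a parabolic subgroup, so $\mathbf{P}_1 \times \mathbf{P}_2 \subseteq \mathbf{G}_1 \times \mathbf{G}_2$ is parabolic.

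Next, I would verify the $0$-cuspidality condition. By the general description of rational boundary components of a product of Hermitian symmetric domains (cf.\ \S\ref{lss}), the rational boundary components of $D_1 \times D_2$ are products $F_1 \times F_2$ of rational boundary components $F_i \subseteq \overline{D_i}$, and the normalizer of $F_1 \times F_2$ is $N(F_1) \times N(F_2)$. Thus the boundary component corresponding to $\mathbf{P}_1 \times \mathbf{P}_2$ is the product $F_1 \times F_2$ of the boundary components corresponding to $\mathbf{P}_1$ and $\mathbf{P}_2$. Since each $F_i$ is $0$-dimensional by assumption, so is $F_1 \times F_2$, giving that $\mathbf{P}_1 \times \mathbf{P}_2$ is $0$-cuspidal.

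Finally, I would check the conditions at $p$. Being unramified at $p$ is clearly preserved under products: if $\mathbf{G}_{i,\mathbb{Q}_p}$ is quasisplit and split over an unramified extension of $\mathbb{Q}_p$, then so is $\mathbf{G}_{1,\mathbb{Q}_p} \times \mathbf{G}_{2,\mathbb{Q}_p}$. If $\mathbf{G}_{i,x_i}$ is a reductive $\mathbb{Z}_p$-model of $\mathbf{G}_i$ with special fibre isomorphic to $\mathbb{G}_i$, then $\mathbf{G}_{1,x_1} \times_{\mathbb{Z}_p} \mathbf{G}_{2,x_2}$ is a reductive $\mathbb{Z}_p$-model of $\mathbf{G}_1 \times \mathbf{G}_2$ whose special fibre is $\mathbb{G}_1 \times \mathbb{G}_2$, since both base change to $\mathbb{Q}_p$ and reduction modulo $p$ commute with finite products of schemes. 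This completes the verification; no step is an obstacle, as the lemma is an essentially formal statement about stability of the defining conditions under finite direct products.
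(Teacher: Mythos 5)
Your proof is correct and takes the same approach as the paper's, which consists of the single observation that the product of $0$-cuspidal parabolics is $0$-cuspidal, all other conditions being stable under products for formal reasons. You have simply written out in full the formal verifications that the paper leaves implicit.
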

\begin{proof}
  This follows by noting that if $\mathbf{P}_1$ and $\mathbf{P}_2$ are
  $0$-cuspidal, then so is $\mathbf{P}_1 \times \mathbf{P}_2$.
\end{proof}

We can write any simply connected, semisimple, algebraic group $H$
over a field $L$ as
\begin{equation}
  \label{Gprod}
  H=\prod_{i=1}^k\Res^{K_i}_{L} H_i
\end{equation}
where the $K_i$ are finite separable extensions of $L$, the $H_i$ are
absolutely almost simple, simply connected groups and $\Res$ denotes
Weil restriction.  Moreover, the description of $H$ in~\eqref{Gprod}
is essentially unique in that the list of $K_i$ and $H_i$ appearing is
unique up to reordering and isomorphisms.  Let us say that a
semisimple group $H$ over a finite field is of \emph{potentially
  Hermitian type} if the following two conditions are satisfied:
\begin{enumerate}
\item None of the $H_i$ in \eqref{Gprod} are of type $E_8$, $F_4$ or $G_2$.
\item None of the $H_i$ are triality forms of $D_4$. 
\end{enumerate}

\begin{theorem}\label{t.nont}
  Suppose that $\mathbb{G}$ is a simply connected group of potentially
  Hermitian type over $\mathbb{F}_p$.  Then there exists a
  $0$-cuspidal Hermitian pair $(\mathbf{G},\mathbf{P})$ adapted to
  $\mathbb{G}$.
\end{theorem}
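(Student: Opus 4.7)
The plan is to reduce to the case $\mathbb{G} = \Res^{\mathbb{F}_q}_{\mathbb{F}_p}\mathbb{H}$ with $\mathbb{H}$ absolutely almost simple, and then construct an adapted pair by restriction of scalars from a suitable totally real number field. Using Lemma~\ref{lem-prod} together with the decomposition~\eqref{Gprod}, this reduction is immediate: it suffices to construct an adapted pair for each factor $\Res^{\mathbb{F}_{p^{r_j}}}_{\mathbb{F}_p}\mathbb{H}_j$ separately.

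For such a $\mathbb{G}$, I would first produce, via the approximation Proposition~\ref{GWtors}, a totally real number field $F$ of degree $r$ having a unique prime $\mathfrak{p}$ above $p$, with $F_\mathfrak{p}/\mathbb{Q}_p$ unramified of degree $r$ (so residue field $\mathbb{F}_q$). I would then invoke the theorem of Prasad--Rapinchuk~\cite{pr06} to produce a simply connected, absolutely almost simple algebraic group $\mathbf{H}$ over $F$, equipped with an $F$-rational parabolic subgroup $\mathbf{P}_0$, such that (i) at $\mathfrak{p}$, $\mathbf{H}$ is unramified and its reductive model at a hyperspecial point has reduction $\mathbb{H}$ over $\mathbb{F}_q$; and (ii) at each real place $v$ of $F$, $\mathbf{H}_{F_v}$ is of tube-type Hermitian noncompact form and $\mathbf{P}_{0,F_v}$ corresponds to a $0$-dimensional boundary component of the associated tube domain (equivalently, by Lemma~\ref{l.td}, $\dim U(F_v) = \dim D_v$, where $D_v$ is the Hermitian symmetric domain attached to $\mathbf{H}_{F_v}$).

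Once $\mathbf{H}$ is in hand, I set $\mathbf{G} := \Res^F_\mathbb{Q}\mathbf{H}$ and $\mathbf{P} := \Res^F_\mathbb{Q}\mathbf{P}_0$, and verify the adapted-pair properties directly: $\mathbf{G}$ is simply connected (Weil restriction preserves this) and of Hermitian type since $F$ is totally real and each factor $\mathbf{H}_{F_v}$ is noncompact Hermitian; the parabolic $\mathbf{P}$ is $0$-cuspidal because its boundary component is a product, indexed by real places of $F$, of $0$-dimensional boundary components; and $\mathbf{G}$ is unramified at $p$ with reductive model $\Res^{\mathcal{O}_{F_\mathfrak{p}}}_{\mathbb{Z}_p}\mathbf{H}_{\mathfrak{p},x}$ whose special fiber is $\Res^{\mathbb{F}_q}_{\mathbb{F}_p}\mathbb{H} = \mathbb{G}$.

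The main obstacle is the existence step for $\mathbf{H}$: one must verify, type by type across the absolute Dynkin diagram of $\mathbb{H}$, that there is a tube-type noncompact Hermitian real form of the same absolute type, and that Prasad--Rapinchuk can be applied to realize the prescribed local data (the unramified form at $\mathfrak{p}$ and tube Hermitian forms at the real places) inside a single global simply connected group over $F$ admitting an $F$-rational $0$-cuspidal parabolic. The excluded absolute types $E_8$, $F_4$, $G_2$ admit no noncompact Hermitian real form at all, and triality forms of $D_4$ are excluded because their nontrivial cubic Galois action is incompatible with $F$ being totally real; these exclusions account precisely for the definition of potentially Hermitian type. For the remaining types $C_n$, $D_{2m}$, suitable $B_n$ and $D$-type cases, $E_7$, and the admissible $A$-type cases, the required tube forms are the familiar symplectic, orthogonal, and unitary groups, together with the exceptional Hermitian $E_7$, and the corresponding $F$-forms together with an explicit $0$-cuspidal parabolic can be exhibited and then adjusted at $\mathfrak{p}$ and the archimedean places by Prasad--Rapinchuk.
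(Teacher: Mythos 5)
Your overall strategy mirrors the paper's: reduce to the almost-simple case via Lemma~\ref{lem-prod}, produce a totally real field of the right degree with $p$ inert via Proposition~\ref{GWtors}/Lemma~\ref{GW}, invoke Prasad--Rapinchuk~\cite{pr06} to build a global form $\mathbf{H}$ with prescribed archimedean and $\mathfrak{p}$-adic behavior, and then set $\mathbf{G}=\Res^F_{\mathbb{Q}}\mathbf{H}$. However, there is a substantive gap.

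You insist that at every real place $v$ of $F$ the group $\mathbf{H}_{F_v}$ be a \emph{tube-type} Hermitian form, and you identify ``$0$-cuspidal'' with ``$\dim U(F_v)=\dim D_v$'' via Lemma~\ref{l.td}. But those are not equivalent: by Lemma~\ref{l.td}, tube type is strictly stronger than having a $0$-dimensional boundary component. This over-strong requirement cannot be met for absolute types $A_r$ with $r$ even (the real forms $\mathbf{SU}(p,q)$ are tube type only when $p=q$, which forces $r+1$ even) or for $E_6$ (the Hermitian real form has $\dim D=16$ but $\dim U(F)=8$). Yet both of these types are of potentially Hermitian type, so the theorem as stated covers them. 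You also suggest that ``these exclusions account precisely for the definition of potentially Hermitian type,'' but the definition excludes only $E_8$, $F_4$, $G_2$, and triality $D_4$ — not $A_r$ with $r$ even or $E_6$. The paper handles the non-tube cases by choosing $\mathbf{H}_\infty$ to be, e.g., $\mathbf{SU}_{(r/2,r/2+1)}$ or Hermitian $E_6$, and uses Deligne's special vertex (which gives a $0$-dimensional boundary component without being tube type). The fact that these cases yield only a lower bound rather than full incompressibility is precisely what Remark~\ref{list}(2)(3) records.

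A secondary issue: you outsource the existence of an $F$-rational $0$-cuspidal parabolic $\mathbf{P}_0$ entirely to Prasad--Rapinchuk. That works, but only after first arranging that the $*$-action of $\Gal(\bar F/F)$ stabilizes the orbit $Z_\infty$ of the special vertex; this is what lets one apply \cite[Theorem 1 (iii)]{pr06} to make $Z_\infty$ circled. In the paper this is organized by first building a quasisplit group $\mathbf{H}^{\mathrm{qs}}$ whose $*$-action factors through the stabilizer $V\le\Aut D$ of $Z_\infty$ (using Proposition~\ref{GWtors} for $V$), and this is also where the exclusion of triality $D_4$ enters — not via incompatibility with $F$ being totally real (there is no issue of that kind), but because one needs the $\Gal(\mathbb{F}_q)$-action on $\Dyn\mathbb{H}$ to factor through $V$, which the cubic action does not. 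You should make this step explicit rather than asserting the Prasad--Rapinchuk output already comes with the required parabolic.
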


\begin{remark} \label{list}
  $ $
  \begin{enumerate}
  \item It will follow from the proof of the theorem and the
    classification of tube domains \cite[p.~213]{Faraut-Koranyi} that
    we can choose the pair $(\mathbf{G},\mathbf{P})$ to be of tube
    type in the cases that $\mathbb{G}$ has no factors of triality
    type $D_4$, no factors over $\overline{\mathbb{F}}_p$ of type
    $A_r$, with $r$ even, and no factors of type $E_6$.  In
    particular, we may take $(\mathbf{G},\mathbf{P})$ to be of tube
    type when $\mathbb{G}$ is of type $E_7$.

    When $(\mathbf{G},\mathbf{P})$ is of tube type, by combining
    Theorem \ref{thm.main2}, Theorem \ref{t.Hgp} and Theorem
    \ref{t.nont} we get $p$-incompressible congruence covers with
    Galois group $\mathbb{G}(\mathbb{F}_p)$.
  \item In contrast to (1), in \cite{FKW19} split factors of type
    $A_r$ with $r$ even are also allowed, but factors of type $E_7$
    are not. However, we do get weaker results for groups of type
    $A_r$ with $r$ even.
  \item If $\mathbb{G}$ is a form of $E_6$, the dimension of the
    Hermitian symmetric domain corresponding to $\mathbf{G}$ is $16$,
    but we only get a lower bound of $8$---the dimension of the centre
    of the unipotent radical of $\mathbf{P}$---for the $p$-essential
    dimension of congruence covers with Galois group
    $\mathbb{G}(\mathbb{F}_p)$.
  \end{enumerate}
\end{remark}

\subsection{Satake--Tits Index} Associated to any reductive
group $\mathbf{G}$ over a field $L$ we have the Satake--Tits 
index~\cite[p. 38]{TitsBoulder}. This
consists of the following data:
\begin{enumerate}
\item The Dynkin diagram $D=\Dyn \mathbf{G}$.
\item An action $\tau:\Gal(L_{\sep}/L)\to \Aut D$ of the absolute Galois group of $L$
on the Dynkin diagram, sometimes called the $*$-action. 
\item A collection of $D_0$ of vertices called the \emph{uncircled vertices}. 
\end{enumerate}
The vertices of $D$, which are  defined in terms of simple roots, can
be identified with conjugacy classes of maximal parabolic subgroups in
$\mathbf{G}_{L_{\sep}}$.  Conjugation then gives the $*$-action.
The uncircled vertices correspond to simple roots in the anisotropic
kernel of $\mathbf{G}$. Then conjugacy classes of maximal parabolic
subgroups defined over $L$ are in one-one correspondence with $\tau$
orbits in $D\setminus D_0$. These are the orbits that Tits calls
\emph{distinguished}, and, in the Satake--Tits index these orbits are drawn
with circles around them. 

If $K/L$ is a field extension, then we can identify $D$ with the Dynkin
diagram $D_K$ of $\mathbf{G}_K$.  So the first ingredient in the
Satake--Tits index is insensitive to field extension.  However, the
$*$-action obviously changes: for example, if $K\subset L_{\sep}$,
then the $*$-action $\tau_K:\Gal(L_{\sep}/K)\to \Aut D$ is obtained by
restriction.  If we write $D_{K,0}$ for the set of uncircled
vertices in the Satake--Tits index of $\mathbf{G}_K$, then we have 
$D_{K,0}\subseteq D_0$.   

In the case that $L=\mathbb{R}$ and $(\mathbf{G},\mathbf{P})$ is a
$0$-cuspidal pair, the $*$-action of $\Gal(\mathbb{C}/\mathbb{R})$ on
$D=D_{\mathbb{R}}$ is the opposition on the Dynkin diagram. It
preserves connected components of $D$ and, for $\mathbf{G}$ almost
simple, it is
\begin{enumerate}
\item the unique nontrivial action in types $A$, $D_n$ for $n$ odd,
and in type $E_6$,
\item the trivial action in all other types.
\end{enumerate}
  
If $\mathbf{G}$ is almost simple, then Deligne's special vertex $s$
(see~\cite[p.~258]{DeligneShimuraCorv}) lies in $D_{\mathbb{R},0}$.
Its orbit under the $\Gal(\mathbb{C}/\mathbb{R})$ $*$-action
corresponds to conjugacy class of real parabolics associated to the
$0$-dimensional boundary components.  For general $\mathbf{G}$, there
is a special vertex in each component of $D$, whose orbit is circled
in the real Satake--Tits index.  Write $Z_{\infty}$ for the union of
the orbits of these special vertices under the real $*$-action.  Then
there exists a $0$-dimensional rational boundary component if and only
if $Z_{\infty}$ is contained in the circled vertices of the
Satake--Tits index of $\mathbf{G}$.

\subsection{Lemmas} 
We believe the following result on approximation for number fields 
is probably well-known, but for the convenience of the reader and lack of a
published reference we give a proof following ideas we learned
from~\cite{SaltmanGenAIM}.

\begin{proposition}\label{GWtors}  Suppose $n$ is a positive integer, 
$F$ is a number field and $S$ is a finite set of places of $F$. Then the 
map 
$$
\coh^1(F,S_n)\to \prod_{\nu\in S} \coh^1(F_{\nu}, S_n)
$$
is surjective. 
\end{proposition}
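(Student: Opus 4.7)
The plan is to use the standard identification of $\coh^1(F, S_n)$ (with $S_n$ acting trivially) with the set of isomorphism classes of étale $F$-algebras of degree $n$, and likewise at each completion $F_\nu$. Under this identification, the claim becomes: given étale $F_\nu$-algebras $A_\nu$ of degree $n$ for each $\nu \in S$, produce an étale $F$-algebra $A$ of degree $n$ with $A\otimes_F F_\nu \cong A_\nu$ for all $\nu\in S$.

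First I would reduce each $A_\nu$ to a single monic polynomial. Since $F_\nu$ is an infinite field, every finite étale $F_\nu$-algebra admits a primitive element: one picks generators of each field factor with pairwise distinct minimal polynomials, which is possible because we may translate any chosen generator by nonzero elements of $F_\nu$ to vary its minimal polynomial. Consequently we may write $A_\nu \cong F_\nu[x]/(f_\nu)$ for some monic separable $f_\nu(x)\in F_\nu[x]$ of degree $n$.

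Next I would invoke weak approximation for the coefficients: the diagonal embedding $F \hookrightarrow \prod_{\nu\in S} F_\nu$ has dense image, so there exists a monic polynomial $f(x)\in F[x]$ of degree $n$ whose coefficients are simultaneously $\nu$-adically arbitrarily close to those of $f_\nu$ for every $\nu\in S$. The discriminant is a polynomial function of the coefficients, so if $f$ is close enough to $f_\nu$ the discriminant of $f$ at $\nu$ is nonzero, hence $f$ is separable over $F$ (it suffices to check nonvanishing at a single place). The étale $F$-algebra $A := F[x]/(f)$ is then our candidate, and it remains to verify $A\otimes_F F_\nu \cong A_\nu$ at each $\nu\in S$.

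This last step is exactly the content of Krasner's lemma: if the coefficients of $f$ are sufficiently close $\nu$-adically to those of $f_\nu$, then each root of $f_\nu$ in $\bar F_\nu$ admits a root of $f$ in its close neighborhood, and Krasner's lemma forces $F_\nu(\alpha) = F_\nu(\beta)$ for such matching pairs $\alpha,\beta$. This yields a bijection between the irreducible factors of $f$ and $f_\nu$ over $F_\nu$ with isomorphic residue algebras, so $F_\nu[x]/(f) \cong F_\nu[x]/(f_\nu) \cong A_\nu$. There is no genuine obstacle here—each ingredient (primitive elements over an infinite field, weak approximation for number fields, Krasner's lemma) is standard—and the only mild subtlety is the existence of a primitive element in the case where $A_\nu$ has repeated field factors, handled as indicated above.
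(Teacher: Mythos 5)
Your proof is correct and follows essentially the same approach as the paper: identify $\coh^1$ with étale algebras parametrized by monic separable polynomials, and apply weak approximation to the coefficients. The only difference is one of exposition: where the paper simply asserts that the fibers of $U_n(F_\nu)\to\coh^1(F_\nu,S_n)$ are open (from continuity plus finiteness of the target), you make the content of that assertion explicit via Krasner's lemma, which is exactly what is hiding under the hood.
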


\begin{proof}  An $S_n$-torsor or, equivalently, a degree $n$ \'etale algebra, over a field $K$ is determined by a monic, separable
  polynomial of degree $n$ over the field.  So write $U_n(K)$ for the
  set of such polynomials.   We get a surjective map $U_n(K)\twoheadrightarrow \coh^1(K,S_n)$
taking a polynomial $p$ to the isomorphism class of the \'etale algebra $K[x]/p$. 

Now $U_n(F_{\nu})$ is an
  open subset of the space $F_{\nu}^{n-1}$ of all monic degree $n$
  polynomials.  This gives rise to a commutative diagram
\begin{equation}
\begin{tikzcd}
  U_n(F)\ar[r]\ar[d, two heads] & \prod_{\nu\in S} U_n(F_{\nu})\ar[d, two heads]\\
 \coh^1(F,S_n)\ar[r] & \prod_{\nu\in S} \coh^1(F_{\nu},S_n)
\end{tikzcd}
\end{equation}
where the top horizontal arrow is just (the diagonal) inclusion and the bottom horizontal
arrow is restriction.  

Now, it is well-known and easy to see that the fibers of the right vertical
arrow are open.  (In fact, the bottom right set is finite and the right vertical
arrow is continuous.)   And it follows from weak approximation that the image
of the top arrow is dense.  From these two facts along with the surjectivity
of the downward arrows, the proposition follows directly.
\end{proof}

\begin{remark}
  In~\cite[Theorem 5.8]{SaltmanGenAIM}, Saltman gives variants of this
  argument which are applicable to finite groups $G$ admitting a
  generic Galois extension.  In particular, this holds for any group $G$ 
admitting a faithful linear representation $V$ such that $V\sslash G$ is rational. 
\end{remark}

\begin{lemma}\label{GW}
  Suppose $p$ is a prime number and $q=p^n$ with $n\in\mathbb{Z}_+$.
  Then there exists a totally real extension $L/\mathbb{Q}$ of degree
  $n$ in which the prime $p$ is inert.
\end{lemma}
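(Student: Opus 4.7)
The plan is to deduce Lemma \ref{GW} directly from Proposition \ref{GWtors} by prescribing the local behavior of a degree-$n$ \'etale $\mathbb{Q}$-algebra at the two places $\infty$ and $p$, and then checking that the resulting algebra is a field.

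More precisely, I take $F = \mathbb{Q}$ and $S = \{\infty, p\}$ in Proposition~\ref{GWtors}. At $\infty$, I prescribe the trivial class in $\coh^1(\mathbb{R}, S_n)$, which corresponds to the split \'etale $\mathbb{R}$-algebra $\mathbb{R}^n$. At $p$, I prescribe the class corresponding to the unique unramified extension $\mathbb{Q}_{p^n}/\mathbb{Q}_p$ of degree $n$; concretely, this is the class of the continuous surjection $\Gal(\bar{\mathbb{Q}}_p/\mathbb{Q}_p) \twoheadrightarrow \Gal(\mathbb{F}_{p^n}/\mathbb{F}_p) \cong \mathbb{Z}/n$, viewed in $S_n$ via its regular representation (a single $n$-cycle). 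By Proposition~\ref{GWtors}, there exists a class in $\coh^1(\mathbb{Q}, S_n)$ whose restrictions to $\coh^1(\mathbb{R}, S_n)$ and $\coh^1(\mathbb{Q}_p, S_n)$ are the prescribed ones. Translating back, this gives a degree-$n$ \'etale $\mathbb{Q}$-algebra $L$ with $L \otimes_{\mathbb{Q}} \mathbb{R} \cong \mathbb{R}^n$ and $L \otimes_{\mathbb{Q}} \mathbb{Q}_p \cong \mathbb{Q}_{p^n}$.

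It remains to verify that $L$ is actually a number field. A degree-$n$ \'etale $\mathbb{Q}$-algebra corresponds to a continuous $n$-element $\Gal(\bar{\mathbb{Q}}/\mathbb{Q})$-set $X$, and $L$ is a field if and only if this action is transitive. Since $L \otimes_{\mathbb{Q}} \mathbb{Q}_p \cong \mathbb{Q}_{p^n}$ is a field, the restricted action of $\Gal(\bar{\mathbb{Q}}_p/\mathbb{Q}_p)$ on $X$ (via some embedding $\bar{\mathbb{Q}} \hookrightarrow \bar{\mathbb{Q}}_p$) is already transitive; hence so is the full Galois action. Therefore $L/\mathbb{Q}$ is a degree-$n$ field extension that is totally real (by the condition at $\infty$) and in which $p$ is inert with residue field $\mathbb{F}_{p^n}$ (by the condition at $p$, since $\mathbb{Q}_{p^n}$ is the unique unramified degree-$n$ extension of $\mathbb{Q}_p$).

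There is no real obstacle here beyond invoking Proposition~\ref{GWtors}; the only small point worth flagging is the passage from an \'etale algebra to a field, which follows from the elementary observation that transitivity of a subgroup action implies transitivity of the ambient group's action.
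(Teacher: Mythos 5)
Your proof is correct and is exactly the argument the paper has in mind: the paper's own proof of Lemma~\ref{GW} is the single sentence ``This follows immediately from Proposition~\ref{GWtors},'' and you have simply spelled out that deduction (prescribe $\mathbb{R}^n$ at $\infty$ and $\mathbb{Q}_{p^n}$ at $p$, lift to a global \'etale algebra, and note that fieldness at $p$ forces fieldness globally). The only detail the paper leaves implicit that you rightly flag is the transitivity argument showing the \'etale algebra is actually a field.
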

\begin{proof}
  This follows immediately from Proposition~\ref{GWtors}.
\end{proof}

The next lemma applies an approximation theorem of Prasad and
Rapinchuk~\cite{pr06} to produce isotropic algebraic groups
$\mathbf{H}$ with specific behavior at places of $L$.

\begin{lemma}\label{lem-PR}
  Suppose $\mathbb{H}$ is an absolutely almost simple, simply connected
group over $\mathbb{F}_{q}$ which is of potentially Hermitian type. 
Let $L/\mathbb{Q}$ be a field extension
as in Lemma~\ref{GW}. Write $S_{\infty}$ for the set of real places of $L$
and $\{\nu\}$ for the place lying above $p$.   
Then there exists a simply connected group 
$\mathbf{H}$ over $L$ such that 
\begin{enumerate}
\item $\mathbf{H}_{\omega}$ is of Hermitian symmetric type  
for each place $\omega\in S_{\infty}$.
\item $\mathbf{H}$ is unramified at $\nu$, and, for some, hence any,
reductive model $\mathbf{H}_x$ of $\mathbf{H}$ over the ring $R_{\nu}$ 
of integers in $L_{\nu}$, we have $\mathbf{H}_x\otimes_{R_{\nu}} \mathbb{F}_q\cong \mathbb{H}$.  
\item The special vertices of $\mathbf{H}_{\omega}$ are all the
  same in the Dynkin diagram $\Dyn\mathbf{H}$ of $\mathbf{H}$.
Moreover, 
  the orbit $Z_{\infty}$ of this vertex under the opposition is
stable under the $\Gal(L)$ action and  circled in the 
Satake--Tits index of $\mathbf{H}$ over $L$.
\end{enumerate}
\end{lemma}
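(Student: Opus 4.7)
The plan is to build $\mathbf{H}$ as an inner twist of a judiciously chosen quasi-split simply connected $L$-group $\mathbf{H}^\ast$, and to invoke the approximation/realisation theorem of Prasad--Rapinchuk \cite{pr06} to adjust the twist so that the prescribed local behaviour and the isotropy condition (3) both hold simultaneously.

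First, I pin down the combinatorial data. Let $D$ be the absolute Dynkin diagram of $\mathbb{H}$, which I will use as the absolute diagram of $\mathbf{H}$ as well. Let $\sigma_p \in \Aut(D)$ be the $\ast$-action of $\Gal(\overline{\mathbb{F}}_p/\mathbb{F}_q)$, and let $\sigma_\infty \in \Aut(D)$ be the opposition involution on each connected component of $D$---the common $\ast$-action at every real place for any Hermitian real form of the prescribed type. Because $\mathbb{H}$ is of potentially Hermitian type (no triality, no $E_8$, $F_4$, $G_2$), on each component $\Aut(D)$ has order at most $2$, so $\sigma_p$ and $\sigma_\infty$ lie in a group of order at most $2$. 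Applying Proposition~\ref{GWtors} to $S_2$, I choose a field extension $M/L$ of degree at most $2$ whose completions at $\nu$ and at each $\omega \in S_\infty$ realise $\sigma_p$ and $\sigma_\infty$ respectively (taking $M = L$ if both are trivial). I then let $\mathbf{H}^\ast$ be the quasi-split simply connected $L$-group of Dynkin type $D$ whose outer form is classified by $M/L$. By construction $\mathbf{H}^\ast_{L_\nu}$ is unramified with reduction the quasi-split form of $\mathbb{H}$; each $\mathbf{H}^\ast_{L_\omega}$ has the correct $\ast$-action for a Hermitian real form; and the special vertex orbit $Z_\infty$ is $\Gal(L)$-stable and circled in the $L$-index of $\mathbf{H}^\ast$.

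Second, I correct by an inner twist. By Lang's theorem, the unique inner form of $\mathbf{H}^\ast_{L_\nu}$ that is unramified with hyperspecial reduction $\mathbb{H}$ is $\mathbf{H}^\ast_{L_\nu}$ itself, so the required class $[\xi_\nu] \in \coh^1(L_\nu, \mathbf{H}^\ast_{\ad})$ is trivial. At each $\omega \in S_\infty$, the desired Hermitian real form determines a specific class $[\xi_\omega] \in \coh^1(\mathbb{R}, \mathbf{H}^\ast_{\ad})$. These local classes are compatible in the sense required to apply the existence theorem of \cite{pr06}, which, via a combination of the Hasse principle of Kneser--Harder--Chernousov for $\coh^1$ of simply connected semisimple groups and weak approximation for varieties of parabolic subgroups, produces a global class $[\xi] \in \coh^1(L, \mathbf{H}^\ast_{\ad})$ realising the prescribed $[\xi_\nu]$ and $\{[\xi_\omega]\}_{\omega \in S_\infty}$ \emph{and} such that the parabolic subgroup of $\mathbf{H}^\ast$ of type $Z_\infty$ remains defined over $L$ in the twist $\mathbf{H} := {}_\xi \mathbf{H}^\ast$. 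This $\mathbf{H}$ then satisfies (1), (2), and (3).

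The main obstacle is precisely this last clause of (3): inner twisting can a priori shrink the circled set of vertices, turning an $L$-isotropic group into an $L$-anisotropic one even when the local twists are all isotropic. The role of \cite{pr06} is exactly to build a global inner twist with prescribed local invariants \emph{while} keeping a specified parabolic class $L$-rational; verifying that the local data $([\xi_\nu],\{[\xi_\omega]\})$ meets the hypotheses of their construction, and that the resulting $L$-parabolic corresponds to $Z_\infty$, is the technical crux of the argument.
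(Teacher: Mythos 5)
Your overall strategy is the same as the paper's: use Proposition~\ref{GWtors} to construct a quasi-split group $\mathbf{H}^{\mathrm{qs}}$ over $L$ with the prescribed $\ast$-action at $\nu$ and the real places, then apply \cite[Theorem 1]{pr06} to produce an inner twist that realises the correct local forms while keeping the parabolic of type $Z_\infty$ rational over $L$. However, there is a concrete gap in the $D_4$ case which your write-up papers over.

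First, a factual slip: your claim that ``on each component $\Aut(D)$ has order at most $2$'' is false for $D_4$, where $\Aut(D)=S_3$. The intended conclusion (that $\sigma_p$ and $\sigma_\infty$ together generate a group of order at most $2$) still holds, but for a different reason: $\sigma_\infty$ is \emph{trivial} on $D_4$ (since $-1$ lies in $W(D_4)$), and $\sigma_p$ has order at most $2$ by the non-triality hypothesis. More seriously, your assertion that ``by construction $\ldots$ the special vertex orbit $Z_\infty$ is $\Gal(L)$-stable'' is not automatic. For $D_4$, the orbit is a single vertex $\{\alpha_1\}$, and $\sigma_p$ a priori fixes one of the three arms of the diagram — but there is no reason it should be the arm carrying Deligne's special vertex for the Hermitian form $\mathbf{Spin}_{6,2}$ unless you explicitly arrange this. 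The paper fixes this by first relabelling $D$ so that $\alpha_1$ is the vertex fixed by the $\ast$-action of $\Gal(\mathbb{F}_q)$, and then choosing the identification $\Dyn\mathbf{H}_\infty\cong D$ so that Deligne's special vertex also equals $\alpha_1$; it is precisely the compatibility of these two choices that makes $Z_\infty$ stable under $\Gal(L)$. Without that bookkeeping, the isotropy clause of \cite[Theorem 1(iii)]{pr06} — which requires $Z_\infty$ to be a $\Gal(L)$-stable distinguished set of vertices — need not apply, and your quasi-split group may simply have the wrong circled vertices. Finally, you leave ``the desired Hermitian real form'' unspecified; since the position of Deligne's special vertex depends on the choice of real form (and since the paper's explicit list (a)--(e) is also what makes the tube-type refinement in Remark~\ref{list} possible), this choice deserves to be made explicit, especially in types $A_r$, $D_r$, and $D_4$.
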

\begin{proof} Let $D$ denote the Dynkin diagram of $\mathbb{H}$ with
  vertices $\alpha_1,\ldots, \alpha_r$ corresponding to the simple
  roots as in the diagrams in~\cite[p.~532]{helga}.  Notice that, if
  $\mathbb{H}$ is of type $D_4$, then, owing to our assumption that
  $\mathbb{H}$ is not triality, we can assume that $\alpha_1$ is fixed
  by action of $\Gal(\mathbb{F}_q)$ on $D$.

  Now we want to choose a simply connected real group
  $\mathbf{H}_{\infty}$ for each possible type of $\mathbb{H}$ as
  follows. (For most of the types there is no real choice, but we want
  to choose carefully in type $A$ and $D$.)
\begin{enumerate}
\item[(a)] If $\mathbb{H}$ is type $A_r$, then $\mathbf{H}_{\infty}$
  if $\mathbf{SU}_{\bigl ( \tfrac{r+1}{2},\tfrac{r+1}{2}\bigr )}$ is
$r$ is odd and $\mathbf{SU}_{\bigl (\tfrac{r}{2}, \tfrac{r}{2} +1 \bigr)}$ if $r$
is even.
\item[(b)] If $\mathbb{H}$ is of type $B_r$, then we take $\mathbf{H}_{\infty}$ 
to be the unique form of Hermitian symmetric type: $\mathbf{Spin}_{2r-1,2}$.
\item[(c)] If $\mathbb{H}$ is of type $C_r$ we take $\mathbf{H}_{\infty}$ to
be the symplectic group $\mathbf{Sp}_{2r}$. 
\item[(d)] If $\mathbb{H}$ is of type $D_r$, then we take $\mathbf{H}_{\infty}$
to be the group $\mathbf{Spin}_{2r-2,2}$. 
\item[(e)] In types $E_6$ and $E_7$, we take $\mathbf{H}_{\infty}$ to
  be the unique form of Hermitian type.
\end{enumerate}
In each of these choices, we also choose an identification of $\Dyn\mathbf{H}_{\infty}$ with $D$. We can do this so that the labels on the vertices 
in~\cite[p.~532]{helga} match up with the labels $\alpha_1,\ldots, \alpha_r$
for $D$.  This is especially important for $\mathbb{H}$ of type 
$D_4$, where it implies that Deligne's special vertex, which is in that
case $\alpha_1$,
is fixed by the action of $\Gal(\mathbb{F}_p)$ on $D$. 

Now, note that we have 
\begin{equation}
  \label{e.AutD}
  \Aut D =
  \begin{cases}
    \mathbb{Z}/2, & \text{for $\mathbb{H}$ of type $A$, $E_6$ or $D_r$ with $r\neq 4$;}\\
            S_3, & \text{for $\mathbb{H}$ of type $D_4$;}\\
            \{1\}, & \text{otherwise.}
  \end{cases}
\end{equation}

Let $V$ denote the subgroup of $\Aut D$ stabilizing the
$\Gal(\mathbb{C}/\mathbb{R})$-orbit of the special vertex in
$\mathbf{H}_{\infty}$.  It is easy to see that $V=\Aut D$ itself
except in the case where $\mathbb{H}$ is of type $D_4$.  In that case,
$V$ is the subset of $\Aut D=S_3$ stabilizing $\alpha_1$.  It follows
from our nontriality assumption that, in any case, the *-action of
$\Gal(\mathbb{F}_q)$ on $D$ factors through $V$.  Using Proposition
\ref{GWtors} for the group $V$, we can find a quasisplit group
$\mathbf{H}^{\mathrm{qs}}$ over $L$, which is split by a quadratic
extension $M/L$ with $\Gal(M/L)$ acting on $D$ through $V$, such that
$\mathbf{H}^{\mathrm{qs}}$ is an inner form of $\mathbf{H}_{\infty}$
at each infinite place of $L$, and, at the prime $\nu$,
$\mathbf{H}^{\mathrm{qs}}$ is an unramified group over $R_{\nu}$ with
reduction modulo $\nu$ equal to $\mathbb{H}$.  For example, if
$\mathbb{H}$ is of type $D_r$ for $r$ odd, then $\mathbf{H}_{\infty}$
is of outer type over $\mathbb{R}$.  If $\mathbb{H}$ is split, then
producing $\mathbf{H}^{\mathrm{qs}}$ is equivalent to finding a
totally imaginary quadratic extension $M/L$ in which $\nu$ splits.  On
the other hand, if $\mathbb{H}$ is the unique nonsplit simply
connected group over $\mathbb{F}_q$ of type $D_r$, then we need to
find a totally imaginary quadratic extension $M/L$ in which $\nu$ is
inert.  As Proposition \ref{GWtors} allows us to produce quadratic
extensions of $L$ with arbitrary behaviour at the $\omega$ and at
$\nu$, we can produce such groups $\mathbf{H}^{\mathrm{qs}}$.

Since the $\Gal(L)$ action on $\mathbf{H}^{\mathrm{qs}}$ factors through $V$, the orbit $Z_{\infty}$ of Deligne's special vertex in $D=\Dyn\mathbf{H}_{\infty}$
is fixed by $\Gal(L)$. 

Now, by~\cite[Theorem 1]{pr06}, we can find an inner form $\mathbf{H}$
of $\mathbf{H}^{\mathrm{qs}}$ such that
$\mathbf{H}\otimes L_{\omega}=\mathbf{H}_{\infty}$ at all real primes
$\omega$ and $\mathbf{H}\otimes R_{\nu}$ is unramified with reduction
equal to $\mathbb{H}$.  Moreover, by~\cite[Theorem 1 (iii)]{pr06}, we
can do this in such a way that $Z_{\infty}$ is contained in the set of
circled vertices for the Satake--Tits index of $D_{L}$.  This completes
the proof.
\end{proof} 

\subsection{Proof of Theorem~\ref{t.nont}} By Lemma~\ref{lem-prod},
  we can assume that $\mathbb{G}$ is almost simple.  So
  $\mathbb{G}=\Res^{\mathbb{F}_q}_{\mathbb{F}_p} \mathbb{H}$ for some
    $q=p^n$ and some absolutely almost simple group $\mathbb{H}$ of
    potentially Hermitian type over $\mathbb{F}_q$.  Let
$L/\mathbb{Q}$ be a field extension as in Lemma~\ref{GW}
and let $\mathbf{H}$ be a group as in Lemma~\ref{lem-PR}.
Set $\mathbf{G}=\Res^L_{\mathbb{Q}}\mathbf{H}$.   

Since Weil restriction commutes with base change, $\mathbf{G}$ is of
Hermitian type and the base-change of $\mathbf{G}$ to $\mathbb{Q}_p$
is unramified with reduction modulo $p$ isomorphic to $\mathbb{G}$.

Let $\Sigma$ be the orbit under the opposition of the set of special
vertices in the Dynkin diagram $\Dyn\mathbf{G}$.  This is the subset
of the Dynkin diagram corresponding to the $0$-dimensional boundary
components.  Since $Z_{\infty}\subset\Dyn\mathbf{H}$ is stable under
$\Gal(L)$ and contained in the circled vertices, it follows that
$\Sigma$ is stable under $\Gal(\mathbb{Q})$ and it also follows that
$\Sigma$ is contained in the circled vertices of
$\Dyn\mathbf{G}$. Therefore, $\mathbf{G}$ contains a $0$-cuspidal
parabolic $\mathbf{P}$.



\subsection{The group of type \texorpdfstring{$E_7$}{E7}} \label{s.e7}%
There is a unique simply connected form of $E_7$ over $\mb{R}$ of
Hermitian type, and this group has $\mb{R}$-rank $3$
(\cite[p.~518]{helga}). As a special case of Theorem \ref{t.nont}, or
more directly from \cite{PR}, we see that there exists a form $\mf{G}$
of $E_7$ over $\mb{Q}$ of Hermitian type which has a $0$-cuspidal
parabolic $\mf{P}$ or, equivalently, has $\mb{Q}$-rank $3$.

\begin{lemma} \label{l.e7}%
  Any (simply connected) form $\mf{G}$ of $E_7$ over $\mb{Q}$ of
  Hermitian type with $\mb{Q}$-rank $3$ is split at all finite primes
  $p$. Furthermore, such a group is unique up to isomorphism and
  extends to a reductive group scheme over $\mb{Z}$ which is also
  unique up to isomorphism.
\end{lemma}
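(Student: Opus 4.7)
The proof falls naturally into three parts: splitting at each finite prime, uniqueness of $\mf{G}$ over $\mb{Q}$, and the reductive $\mb{Z}$-model.

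For the splitting claim at a finite prime $p$: since the Dynkin diagram of $E_7$ admits no nontrivial automorphisms, every form of $\mf{G}$ is inner, so $\mf{G}$ is classified by a class in $\coh^1(\mb{Q}, \mf{G}^{\ad})$. By Kneser's theorem (see \cite[Chapter 6]{PR}), the anisotropic kernel of any simply connected absolutely simple group over a $p$-adic field is of type ${}^{1}\!A_{n}$ or ${}^{2}\!A_{n}$. Inspecting Tits' classification tables for $E_{7}$ (\cite{TitsCorv, TitsBoulder}) and using that the local Tits index refines the global index $E_{7,3}^{28}$ (i.e., the circled vertices of the global index are contained in those of the local one), the problem reduces to showing that the global $D_{4}$-anisotropic kernel of $\mf{G}$---the $\mb{Q}$-form of $D_{4}$ whose real points give compact $\mathbf{Spin}(8)$---splits at every finite prime, which then forces $\mf{G}_{\mb{Q}_{p}}$ to be split. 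This is the main obstacle: Kneser's theorem prevents full local anisotropy of the $D_{4}$, but a priori a partial anisotropy (corresponding to $\mathbf{Spin}$ of an $8$-variable quadratic form of Witt index $3$ rather than $4$) must be excluded; I would address this by a direct analysis of which $\mb{Q}$-forms of $D_{4}$ can arise as the anisotropic kernel of a rank-$3$ Hermitian $E_{7}$, likely via an explicit octonion or Jordan algebra construction pinning down the relevant trialitarian datum.

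For uniqueness of $\mf{G}$, I would apply the Hasse principle for $\coh^{1}(\mb{Q}, \mf{G}^{\ad})$, which is valid for all simply connected semisimple types other than $E_{8}$ by the theorems of Harder and Chernousov (see \cite[Chapter 6]{PR}). The Hermitian hypothesis pins down the unique class in $\coh^{1}(\mb{R}, \mf{G}^{\ad})$, and by the splitting step the local classes are trivial at every finite prime; injectivity of the restriction map then determines the global class uniquely.

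Finally, for the reductive $\mb{Z}$-model, the Chevalley--Demazure scheme $E_{7}^{\mathrm{sc}}$ over $\mb{Z}$ has generic fiber the split simply connected $E_{7}$ over $\mb{Q}$---which by the uniqueness step is $\mf{G}$---yielding the desired reductive $\mb{Z}$-model. Uniqueness follows because reductive group schemes over $\mb{Z}$ of type $E_{7}$ are classified by $\coh^{1}_{\mathrm{et}}(\mb{Z}, \mf{G}^{\ad})$ (using that $E_{7}$ has trivial outer automorphism group), and restriction to the generic fiber gives an injection into $\coh^{1}(\mb{Q}, \mf{G}^{\ad})$; combined with uniqueness of $\mf{G}$, this gives uniqueness of the $\mb{Z}$-model.
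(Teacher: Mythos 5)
There are two genuine problems with the proposal.

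The splitting claim is left incomplete. Invoking Kneser's theorem (anisotropic kernels over $p$-adic fields have type $A$) and the containment of the global circled vertices in the local ones is a fine start, but you then defer the heart of the matter to an unwritten analysis of which $\mathbb{Q}$-forms of the $D_4$ anisotropic kernel can arise, ``likely via an explicit octonion or Jordan algebra construction.'' As stated this is not a proof. The paper's route is shorter and avoids the $D_4$ analysis entirely: the $\mathbb{Q}_p$-Tits index of $\mathbf{G}$ has $\mathbb{Q}_p$-rank $\geq 3$ \emph{and} has the special vertex circled (both constraints hold over $\mathbb{Q}$, the second because it holds over $\mathbb{R}$ and the $\mathbb{Q}$- and $\mathbb{R}$-ranks agree under the $0$-cuspidal hypothesis), and a direct check of Tits' table of $p$-adic forms of $E_7$ in \cite{TitsBoulder} shows that only the split form satisfies these two constraints.

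The existence of the reductive $\mathbb{Z}$-model, as you argue it, is simply false. You claim the Chevalley--Demazure scheme $E_7^{\mathrm{sc}}/\mathbb{Z}$ has generic fiber $\mathbf{G}$ ``by the uniqueness step.'' But $\mathbf{G}$ is not split over $\mathbb{Q}$: over $\mathbb{R}$ it is the Hermitian form $E_{7(-25)}$, not the split form. The uniqueness step only says $\mathbf{G}$ is the unique form that is split at all finite primes and Hermitian of rank $3$ at infinity; the split $\mathbb{Q}$-group has $\mathbb{Q}$-rank $7$ and different archimedean behavior, so the Chevalley scheme is not an integral model of $\mathbf{G}$. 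The correct construction, which the paper cites from \cite[Proposition 1.2]{GrossZ}, glues reductive $\mathbb{Z}_p$-models (these exist because $\mathbf{G}_{\mathbb{Q}_p}$ is unramified, here even split) with $\mathbf{G}_{\mathbb{Q}}$. Your uniqueness argument via injectivity of $\coh^1_{\mathrm{et}}(\mathbb{Z},\mathbf{G}^{\ad})\to\coh^1(\mathbb{Q},\mathbf{G}^{\ad})$ also needs justification---it amounts to a class number statement for the adjoint integral group scheme---whereas the paper gives a concrete argument using conjugacy of hyperspecial subgroups under $\mathbf{G}^{\ad}(\mathbb{Q}_p)$ and the central sequence $1\to\mu_2\to\mathbf{G}\to\mathbf{G}^{\ad}\to 1$, citing \cite[Proposition 2.1]{GrossZ}. (A small additional slip: the Hasse principle for simply connected groups holds for \emph{all} types, including $E_8$, precisely because of Chernousov; there is no $E_8$ exception.)
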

Such a group over $\mb{Z}$ was constructed explicitly by Baily in
\cite{BaExcept} using integral octonions and by Gross in \cite{GrossZ}
using Galois cohomology.

\begin{proof}
  Only the first part of the lemma is perhaps not explicitly in the
  literature. To prove this, we note that for each prime $p$, the
  $\mb{Q}_p$-rank of $\mathbf{G}$ is at least $3$ and the special
  vertex must be circled in the Satake--Tits diagram of
  $\mf{G}_{\mathbb{Q}_p}$, since both statements hold over
  $\mathbb{Q}$ (the second because it holds over $\mathbb{R}$ and the
  ranks are the same over $\mathbb{Q}$ and $\mathbb{R}$). By
  consulting the table of forms of $E_7$ in \cite[p.~59]{TitsBoulder},
  we see that the only such form of $E_7$ over any $p$-adic field is
  split.

  The uniqueness of $\mf{G}$ over $\mb{Q}$ up to isomorphism follows
  from the Hasse principal for simply connected groups (see
  \cite[p.~266]{GrossZ}) and the existence of an integral model
  follows, for example, from \cite[Proposition 1.2]{GrossZ}. The
  uniqueness of integral models follows from \cite[Proposition 2.1,
  1)]{GrossZ} and the fact that hyperspecial subgroups of
  $\mf{G}(\mb{Q}_p)$ are conjugate under
  $\mf{G}^{\mathrm{ad}}(\mb{Q}_p)$ for all primes $p$: these facts,
  together with the central exact sequence
  \[
    1 \to \mu_2 \to \mf{G}\to  \mf{G}^{\mathrm{ad}} \to 1
  \]
  of group schemes over $\mb{Q}$ imply that given any two extensions
  of $\mf{G}$ to reductive group schemes over $\mb{Z}$, there exists
  an element $g$ of $\mf{G}^{\mathrm{ad}}(\mb{Q})$ such that
  conjugation by $g$ on $\mf{G}$ extends to an isomorphism of the two
  integral models.
\end{proof}

For the rest of this section we shall use $\mf{G}$ to also denote a
reductive integral model of the $\mb{Q}$-group as in Lemma \ref{l.e7}.
The group $\Gamma:=\mf{G}(\mb{Z})$ is then an arithmetic subgroup of
$\mf{G}(\mb{R})$ (and is in fact a maximal arithmetic subgroup
\cite[Theorem 5.2]{BaExcept}). For any integer $n>1$, let $\Gamma(n)$
be the kernel of the (surjective) reduction map
$\Gamma \to \mf{G}(\mb{Z}/n\mb{Z})$. We set
$M_{\Gamma} = \Gamma \backslash D$ and
$M_{\Gamma(n)} = \Gamma(n) \backslash D$, where $D$ is the Hermitian
symmetric domain corresponding to $\mf{G}$. Note that the actions on
$D$ are not always faithful, but the quotients have a natural
algebraic structure such that the maps $M_{\Gamma(n)} \to M_{\Gamma}$
are algebraic \cite{BailyBorel}.

\begin{corollary} \label{t.e7}%
  For any integer $n>1$ and any prime $p \mid n$, the cover
  $M_{\Gamma(n)} \to M_{\Gamma}$ is $p$-incompressible.
\end{corollary}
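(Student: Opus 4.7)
The plan is to mimic the last paragraph of the proof of Corollary~\ref{cor.main}, replacing $\Sp_{2g}$ by $\mf{G}$ and invoking the machinery of Section~\ref{s.Ex}. First I would reduce to the case $n = p$: if $p \mid n$ then $\Gamma(n) \leq \Gamma(p)$, so $M_{\Gamma(n)} \to M_\Gamma$ factors through $M_{\Gamma(p)} \to M_\Gamma$, and an essential-dimension compression of the larger cover descends (by taking quotient by the appropriate subgroup of the Galois group) to one of the smaller. Hence $p$-incompressibility of $M_{\Gamma(p)} \to M_\Gamma$ implies that of $M_{\Gamma(n)} \to M_\Gamma$ for every $n$ with $p \mid n$.

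By Remark~\ref{list}(1), fix a $0$-cuspidal parabolic $\mf{P} \subset \mf{G}$ such that $(\mf{G}, \mf{P})$ is of tube type, and let $\mathbf{U}$ be its unipotent radical; then $\dim \mathbf{U} = \dim D$ by Lemma~\ref{l.td}. The main technical obstacle is that $\Gamma = \mf{G}(\mb{Z})$ contains $-1$ and so is not neat, which prevents a direct application of Theorem~\ref{thm.main2} to $(\Gamma, \Gamma(p))$. To circumvent this, I would pick an integer $N \geq 3$ with $\gcd(N,p) = 1$ and with $\Gamma(N)$ neat (any sufficiently divisible $N$ works), and work instead with the neat pair $\Gamma(Np) = \Gamma(N) \cap \Gamma(p) \unlhd \Gamma(N)$. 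The integral model of $\mf{G}$ provided by Lemma~\ref{l.e7} exhibits $\Gamma(N)$ and $\Gamma(Np)$ as the $\mb{Q}$-points of a principal $p$-pair $(K,L) \subset \mf{G}(\af)$ in the sense of \S\ref{s.prinp}, with $K_p = \mf{G}(\mb{Z}_p)$ hyperspecial; hence Theorem~\ref{t.Hgp} gives that
\[
  \mathbb{U} := \frac{\Gamma(N)\cap\mathbf{U}(\mb{Q})}{\Gamma(Np)\cap\mathbf{U}(\mb{Q})}
\]
is elementary abelian of $\mb{F}_p$-rank $\dim \mathbf{U} = \dim D$. Since the adjoint isogeny $\rho\colon \mf{G} \to \mf{G}^{\ad}$ is an isomorphism on $\mathbf{U}$, applying Theorem~\ref{thm.main2} to $\mf{G}^{\ad}$ with the neat arithmetic subgroups $\rho(\Gamma(N))$ and $\rho(\Gamma(Np))$ (whose $\mathbb{U}$-group agrees with the one above) yields that $M_{\Gamma(Np)} \to M_{\Gamma(N)}$ is $p$-incompressible.

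To descend from this to $M_{\Gamma(p)} \to M_\Gamma$, I would run the fibre-product argument of Corollary~\ref{cor.main}: let $M'$ be the normalisation of the component of $M_{\Gamma(p)} \times_{M_\Gamma} M_{\Gamma(N)}$ dominating $M_{\Gamma(N)}$, and observe that the natural finite surjective morphism $M_{\Gamma(Np)} \to M'$ is $\mathbb{U}$-equivariant. Pushing forward a $\mathbb{U}$-fixed point on a smooth $\mathbb{U}$-equivariant compactification of $M_{\Gamma(Np)}$ (then applying the going-down statement \cite[Proposition A.2]{ry}) produces a $\mathbb{U}$-fixed point on any smooth $\mathbb{U}$-equivariant compactification of $M'$. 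Proposition~\ref{prop.fixed-point}, combined with $\dim \mathbb{U} = \dim D$, then gives the $p$-incompressibility of $M' \to M_{\Gamma(N)}$. Since $M_{\Gamma(N)} \to M_\Gamma$ has degree dividing $|\mf{G}(\mb{Z}/N\mb{Z})|$ and hence prime to $p$, and $M' \to M_{\Gamma(N)}$ is the base change of $M_{\Gamma(p)} \to M_\Gamma$ along it, the invariance of $p$-essential dimension under prime-to-$p$ base change gives the desired $p$-incompressibility of $M_{\Gamma(p)} \to M_\Gamma$. The only genuine difficulty is the non-neatness of $\Gamma$; everything else is bookkeeping to fit the level-$(N, Np)$ data into a principal $p$-pair so that Theorem~\ref{t.Hgp} outputs the maximal possible $\mb{F}_p$-rank for $\mathbb{U}$.
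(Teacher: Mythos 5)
Your argument follows the paper's route exactly: reduce to $n=p$, pass to the neat pair $(\Gamma(N),\Gamma(Np))$ for $N>2$ coprime to $p$, verify that $\mathbb{U}$ has $\mathbb{F}_p$-rank equal to $\dim D$ (packaging this through the principal $p$-pair formalism and Theorem~\ref{t.Hgp} is a clean way to see it; the paper's terse proof leans on the tube-domain picture of Corollary~\ref{cor.main}, but both give the same count), apply Theorem~\ref{thm.main2} via the adjoint group, and then descend to $M_{\Gamma(p)}\to M_\Gamma$ by the fibre-product trick. One factual slip: $|\mf{G}(\mb{Z}/N\mb{Z})|$ need not be prime to $p$ just because $\gcd(N,p)=1$ --- $p$ can very well divide $|\mf{G}(\mb{F}_\ell)|$ for primes $\ell\mid N$ with $\ell\neq p$. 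Fortunately that hypothesis is also unnecessary: any prime-to-$p$ compression of $M_{\Gamma(p)}\to M_\Gamma$ pulls back along the dominant map $M_{\Gamma(N)}\to M_\Gamma$ to a prime-to-$p$ compression of $M'\to M_{\Gamma(N)}$, so the needed inequality $\ed(M_{\Gamma(p)}\to M_\Gamma;p)\geq\ed(M'\to M_{\Gamma(N)};p)$ holds regardless of the degree of $M_{\Gamma(N)}\to M_\Gamma$, and your conclusion stands.
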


\begin{proof}
  We may clearly assume that $n=p$.  Since the group $\Gamma$ is not
  neat we cannot immediately apply Theorem \ref{thm.main2}. However,
  by embedding $\mf{G}$ in $\mf{GL}_{r,\mb{Z}}$ for some $r$,
  we see that if $N>2$ is any integer then $\Gamma(N)$ is neat. Using
  this, and the fact that $D$ is a tube domain, the corollary follows
  in exactly the same way as Corollary \ref{cor.main}.
\end{proof}

\appendix

\section{Essential dimension of variations of Hodge structure}
\label{vhs}
Let $B$ be an irreducible variety over $\mathbb{C}$ and let $L$ be a
locally constant sheaf on $B^{\mathrm{an}}$. The purpose of this appendix is
to define the essential dimension $\ed(L)$ of $L$ and give a lower
bound for this when $L$ is the local system $\mathbb{H}_{\mathbb{Z}}$
underlying a variation of Hodge structure (or VHS, for short) $\mathbb{H}$.

Analogously to the definition of $\ed(f)$ for a generically \'etale
morphism in the introduction, we define the essential dimension
$\ed(L)$ of $L$ to be the minimum of the dimensions of irreducible
varieties $B'$ such that the following condition holds:
\begin{itemize}
\item There is a locally constant sheaf $L'$ on ${B'}^{\mathrm{an}}$,
  a dense open subvariety $U$ of $B$, and a morphism $f:U \to B'$,
  such that $L|_{U^{\mathrm{an}}} \cong (f^{\mathrm{an}})^{-1}(L')$.
\end{itemize}

  We then have the following result, which provides some weak evidence
  for Conjecture \ref{c.vhs} beyond the case of variations whose
  associated period domain is Hermitian symmetric.
  \begin{proposition} \label{p.vhs}%
    Let $B$ be a smooth irreducible variety over $\mathbb{C}$ and let
    $\mathbb{H}$ be an integral (polarised) variation of Hodge
    structure on $B$ with underlying local system
    $\mathbb{H}_{\mathbb{Z}}$.  Then
  \[
    \mathrm{ed}(\mathbb{H}_{\mathbb{Z}}) \geq d,
  \]
  where $d$ is the
  dimension of the image of the period map associated to the VHS
  $\mathbb{H}$ (as defined in \S\ref{ss.conj}).
\end{proposition}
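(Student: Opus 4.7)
The plan is to prove the contrapositive. Let $e := \mathrm{ed}(\mathbb{H}_{\mathbb{Z}})$ and choose witnessing data as in the definition: a dense open $U \subset B$, an irreducible variety $B'$ of dimension $e$, a morphism $f : U \to B'$, and a local system $L'$ on $B'$ with $\mathbb{H}_{\mathbb{Z}}|_U \cong f^{-1} L'$. Replacing $B'$ by the closure of $f(U)$ we may assume $f$ is dominant, and then the monodromy representation of $\mathbb{H}_{\mathbb{Z}}|_U$ factors through $\pi_1(B')$. In particular, the restriction of $\mathbb{H}_{\mathbb{Z}}$ to any analytic fibre $F$ of $f$ is a trivial local system, so that $\mathbb{H}|_F$ is a polarized $\mathbb{Z}$-variation of Hodge structure whose underlying local system is trivial.

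The heart of the argument is the claim that the period map $\varphi : U \to \Gamma\backslash D$ is constant on a general fibre $F$ of $f$. Since $\mathbb{H}_{\mathbb{Z}}|_F$ is trivial, the Hodge filtration of $\mathbb{H}|_F$ lifts directly to a horizontal holomorphic map $\tilde{\varphi}_F : F \to D$. I would then fix a smooth projective compactification $\overline{F}$ of $F$ with $\overline{F} \setminus F$ a normal crossings divisor. Triviality of the underlying local system means that no Deligne canonical extension twisting is needed, and Schmid's nilpotent orbit theorem then implies that the Hodge filtration extends holomorphically across the boundary, yielding an extension $\overline{F} \to \check{D}$ into the compact dual that remains horizontal (horizontality is an analytic condition and is preserved under analytic continuation). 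The classical rigidity theorem -- a horizontal holomorphic map from a compact K\"ahler manifold to a period domain is constant, as a consequence of the negative holomorphic sectional curvature of $D$ along horizontal directions -- then forces $\tilde{\varphi}_F$ to be constant, and hence so is $\varphi|_F$.

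Granted the claim, $\varphi$ descends generically to a rational map factoring through $f : U \dashrightarrow B'$, so the image $Y$ of the period map satisfies $d = \dim Y \leq \dim B' = e$, which is the desired inequality. The main obstacle is the extension step for $\tilde{\varphi}_F$: outside the Hermitian symmetric setting there is no direct analogue of Borel's extension theorem that applies to a general Griffiths period domain, so one must appeal to Schmid's asymptotic analysis. What saves the argument is precisely the triviality of the local system on each $F$ -- the limiting mixed Hodge structure is then pure and unaltered, eliminating the monodromy-weight obstructions that would otherwise prevent extension across the boundary divisors.
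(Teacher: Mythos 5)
Your proof is correct, but it takes a noticeably longer route than the paper's. The paper's treatment of the key step---showing the period map $\varphi$ is constant on (connected components of) fibres of $f$---is a single citation of the theorem of the fixed part (Deligne, \emph{Th\'eorie de Hodge II}, Corollaire 4.1.2, or Schmid, Theorem 7.22), which states exactly that a polarizable integral VHS on a smooth quasi-projective base with trivial underlying local system is constant. Your argument re-proves this special case from scratch: compactify a general fibre $F$, use the nilpotent orbit theorem with $N=0$ to extend the lifted period map across the boundary divisors, then invoke rigidity (the negative holomorphic sectional curvature of $D$ along horizontal directions forces a horizontal holomorphic map from a compact K\"ahler manifold into $D$ to be constant). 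This is the same underlying technology the paper leans on indirectly (Schmid's asymptotic analysis underlies his Theorem 7.22), so the two proofs are cousins rather than genuinely different, but yours is more self-contained and makes explicit exactly where triviality of $\mathbb{H}_{\mathbb{Z}}|_F$ is used: to lift $\varphi|_F$ to $D$ rather than $\Gamma\backslash D$, to kill the nilpotent operators, and to ensure the boundary limit lands in $D$ rather than merely in $\check D$.

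Two places deserve to be made explicit. First, the rigidity theorem applies only to maps into the period domain $D$ itself, not into the compact dual $\check D$; you therefore need the extended map to land in $D$. You gesture at this (``the limiting mixed Hodge structure is then pure''), but state it directly: when the local monodromy logarithm $N$ vanishes, the weight filtration is trivial, the limit filtration $F_\infty$ is the Hodge filtration of a polarized pure Hodge structure, and hence $F_\infty\in D$. Second, extension via the nilpotent orbit theorem is a statement near codimension-one boundary strata; to extend across the higher-codimension strata of $\overline F\setminus F$, appeal to normality of $\overline F$ and projectivity of $\check D$ (a morphism to a projective variety defined outside codimension $\geq 2$ on a smooth, hence normal, variety extends).
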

The simple proof below was explained to us by Madhav Nori.

\begin{proof}
  Suppose $\mathrm{ed}(\mathbb{H}_{\mathbb{Z}}) = n$. It follows from
  the definitions that there exists a nonempty open subvariety
  $U \hookrightarrow B$, a variety $B'$ of dimension $n$, a morphism
  $f:U \to B'$, and a local system $L'$ on $B'$ such that
  $\mathbb{H}_{\mathbb{Z}}|_{U^{\mathrm{an}}} \cong
  (f^{\mathrm{an}})^{-1}(L')$. Replacing $B$ by $U$, we may assume
  that $f$ is defined on all of $B$. Clearly
  $(f^{\mathrm{an}})^{-1}(L)$ is constant on (the analytification of)
  all fibres of $f$, so the local system $\mathbb{H}_{\mathbb{Z}}$ is
  also constant on these fibres.  

  Let $\phi:B^{an} \to \Gamma\backslash D $ be the period map as in
  \S\ref{ss.conj}.  Then, since $\mathbb{H}_{\mathbb{Z}}$ is constant
  on the fibers of $f$, it follows from the theorem of the fixed part
  that $\phi$ is constant on the connected components of the fibers of
  $f$~\cite[Corollaire 4.1.2]{DeligneHodge2}, \cite[Theorem
  7.22]{Schmid-VHS}.  This implies that $\dim V \geq \dim Y$, where
  $Y$ is the image of the period map, i.e., $n \geq d$.
\end{proof}

\begin{remark}$ $
  \begin{enumerate}
  \item For the purposes of Proposition \ref{p.vhs}, the algebraicity
    of the image of the period map proved in \cite{BBT} is
    irrelevant. Without knowing anything about the image, we may
    define the ``dimension of the image of the period map'' simply to
    be the rank of $\mathrm{d}\phi$ at a general point of
    $B^{\mathrm{an}}$.
  \item The inequality in Proposition \ref{p.vhs} can be strict:
    $\mathbb{H}_{\mathbb{Z}}$ can be a nonconstant local system even
    if $\mathbb{H}_x$ is a constant Hodge structure. However, one can
    see that there always exists a finite \'etale cover
    $p:\tilde{B} \to B$ such that $\ed(p^*(\mathbb{H}))$ is equal to
    the dimension of the image of the period map.
\end{enumerate}
\end{remark}

\section{Essential dimension of
  \texorpdfstring{$\Sp_{2g}(\mb{F}_p)$}{Sp(2g,Fp)}}\label{a.benson}

The purpose of this appendix it to give the computation, due to Dave
Benson, of the essential dimension at $p$ of the group 
$\Sp_{2g}(\mb{F}_p)$.   

\begin{theorem}[Benson]\label{t.Benson}  Let $p$ be an odd prime and $g$ a positive
integer. Then 
$$\ed_{\mathbb{C}} (\Sp_{2g}(\mb{F}_p);p)=p^{g-1}.$$ 
\end{theorem}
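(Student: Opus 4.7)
The plan is to reduce to the Sylow $p$-subgroup, invoke the Karpenko--Merkurjev theorem to express $\ed_{\mathbb{C}}(U;p)$ as a minimum faithful representation dimension, and then compute this via Clifford theory applied to the Siegel Levi decomposition. Let $U \leq \Sp_{2g}(\mathbb{F}_p)$ denote the Sylow $p$-subgroup of upper-triangular unipotent matrices; it has order $p^{g^2}$, and its center $Z(U)$ is the root subgroup for the highest root, cyclic of order $p$ and contained in every nontrivial normal subgroup of $U$. By standard Sylow reduction, $\ed_{\mathbb{C}}(\Sp_{2g}(\mathbb{F}_p);p) = \ed_{\mathbb{C}}(U;p)$, and Karpenko--Merkurjev (Remark~\ref{rem.elementary}) identifies this with the minimum dimension of a faithful complex representation of $U$. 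Because $Z(U)$ is cyclic and meets every nontrivial normal subgroup, any faithful representation has a faithful irreducible summand, so the problem reduces to bounding dimensions of faithful irreducible representations of $U$.

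I would use the Levi decomposition $U = N \rtimes V$, where $N$ is the unipotent radical of the Siegel parabolic (abelian, $V$-equivariantly isomorphic to $\Sym^2 \mathbb{F}_p^g$) and $V \leq \GL_g(\mathbb{F}_p)$ is the upper-unipotent subgroup acting by $S \mapsto A S A^T$. Identifying $\widehat{N}$ with $\Sym^2 \mathbb{F}_p^g$ via $\chi_M(S) = \psi(\operatorname{tr}(MS))$ for a fixed nontrivial additive character $\psi$, the induced $V$-action on $\widehat{N}$ is $M \mapsto B^T M B$, and the line $Z(U) \subset N$ corresponds to $\mathbb{F}_p \cdot E_{11}$. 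A direct computation gives $(B^T M B)_{11} = (Be_1)^T M (Be_1) = M_{11}$, since $Be_1 = e_1$ for $B$ upper unipotent; so $M_{11}$ is a $V$-orbit invariant, and $\chi_M|_{Z(U)} \neq 1$ is equivalent to $M_{11} \neq 0$. By Clifford theory, each irreducible of $U$ has the form $\mathrm{Ind}_{NV_{\chi}}^U \tilde{\chi}$, where $\chi = \chi_M$ and $\tilde{\chi}$ extends $\chi$ to $NV_\chi$ (such an extension always exists, since $\chi$ is $V_\chi$-invariant by definition of the stabilizer), and has dimension $[V:V_\chi]$; it is faithful iff the $V$-orbit of $\chi_M$ spans $\widehat{N}$.

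For the upper bound I would take $M = E_{11}$, yielding $B^T E_{11} B = (B^T e_1)(B^T e_1)^T$. As $B$ runs over $V$, the vector $B^T e_1$ runs through $\{(1, t_2, \ldots, t_g)^T : t_i \in \mathbb{F}_p\}$, producing an orbit of exactly $p^{g-1}$ rank-one matrices. The set $\{ww^T : w_1 = 1\}$ linearly spans $\Sym^2 \mathbb{F}_p^g$ (taking $w = e_1$, $e_1 + e_i$, and $e_1 + e_i + e_j$ successively produces $E_{11}$, then $E_{ii} + E_{1i} + E_{i1}$, then $E_{ij} + E_{ji}$), so the induced representation is faithful on $N$; moreover, the stabilizer $V_{\chi_{E_{11}}} = \{B \in V : B_{1i} = 0 \text{ for } i \geq 2\}$ does not contain $I + E_{1g}$, which generates $Z(V)$, hence contains no nontrivial normal subgroup of $V$, so the induced representation is faithful on $V$ as well. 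The resulting faithful irreducible has dimension $[V:V_{\chi_{E_{11}}}] = p^{g-1}$. For the matching lower bound, the first-column map $M \mapsto Me_1$ is $V$-equivariant with $V$ acting by $B^T$ on $\mathbb{F}_p^g$; given $M_{11} \neq 0$, the formula $(B^T w)_i = w_i + \sum_{j < i} B_{ji} w_j$ (with $w = Me_1$, $w_1 = M_{11} \neq 0$) shows that by freely choosing $B_{1i}$ one can achieve any value of $(B^T w)_i$ for $i = 2, \ldots, g$ independently, so the $V$-orbit of $Me_1$ has exactly $p^{g-1}$ elements. Hence the $V$-orbit of $M$ in $\widehat{N}$ has size at least $p^{g-1}$, giving $\dim \rho \geq p^{g-1}$ for every faithful irreducible $\rho$.

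The hard part will be rigorously establishing the Sylow reduction $\ed_{\mathbb{C}}(\Sp_{2g}(\mathbb{F}_p);p) = \ed_{\mathbb{C}}(U;p)$: this is a standard fact for finite groups over fields containing $\mu_p$, but lies outside what is developed in \S\ref{s.method}, so the proof would either invoke a cited result or give a direct argument via a versality transfer between $G$-torsors and $U$-torsors through a prime-to-$p$ base change. Everything else is a direct computation once the right normal form ($\chi_{E_{11}}$) has been isolated.
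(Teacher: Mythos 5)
Your argument is correct in outline but takes a genuinely different route from the paper's. Both proofs begin the same way: reduce to the minimal faithful representation dimension of the Sylow $p$-subgroup $P$ via Karpenko--Merkurjev (the paper simply cites \cite{km2} for the full chain $\ed(G;p) = \ed P = \dim W$, including the prime-to-$p$ transfer you flag as the ``hard part''), and use the cyclicity of $Z(P)$ to restrict attention to faithful irreducibles. The bounds themselves diverge. For the upper bound the paper invokes the existence of a faithful irreducible (Weil) representation of $\Sp_{2g}(\mb{F}_p)$ of degree $(p^g-1)/2$ \cite{Ward}: its restriction to $P$ is a faithful representation of degree $< p^g$, and since $\dim W$ must be a power of $p$, this forces $\dim W \leq p^{g-1}$. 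For the lower bound the paper exhibits an extraspecial subgroup $H \leq P$ of order $p^{2g-1}$ (unitriangular matrices supported on the top row and last column) and applies the standard fact that a faithful representation of such a group has dimension $\geq p^{g-1}$. Your Clifford-theoretic computation on the Siegel decomposition $U = N \rtimes V$ is a more self-contained alternative: it constructs the extremal irreducible explicitly rather than cutting it off a known one, at the cost of a longer orbit calculation, and the subgroups carrying the lower bound (the extraspecial $H$ versus the abelian Siegel radical $N$) are genuinely different.

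Three minor points to tighten. (i) Your spanning claim is not established by the vectors you list: $w = e_1$, $e_1+e_i$, $e_1+e_i+e_j$ produce $E_{11}$, $E_{ii}+E_{1i}+E_{i1}$, and $E_{ij}+E_{ji}$, which span a subspace of codimension $g-1$ in $\Sym^2\mb{F}_p^g$ --- you never separate $E_{ii}$ from $E_{1i}+E_{i1}$. Adding $w = e_1 - e_i$ and using $p$ odd closes the gap. (ii) The statement ``each irreducible of $U$ has the form $\mathrm{Ind}_{NV_\chi}^U \tilde{\chi}$'' is too strong; the irreducibles over $\chi$ are $\mathrm{Ind}_{NV_\chi}^U(\tilde{\chi}\otimes\sigma)$ for $\sigma$ an irreducible of $V_\chi$, of dimension $[V:V_\chi]\dim\sigma$. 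Your arguments only use $\dim\rho \geq [V:V_\chi]$ (lower bound) or $\sigma$ trivial (upper bound), so the conclusions stand, but the claim should be corrected. (iii) Your separate verification of ``faithfulness on $V$'' is redundant: once the induced representation is faithful on $N$, its kernel is a normal subgroup of the $p$-group $U$ meeting $Z(U) \subset N$ trivially, hence trivial.
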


\begin{proof}
Let $V:=\mathbb{F}_p^{2g}$  with basis $e_1,\ldots, e_{2g}$ and let 
$\phi$ be the symplectic form on $V$ given by 
\begin{equation}
\phi=\sum_{i=1}^g e_{2g+1-i}^*\wedge e_i^*.
\end{equation}
Then the group $G$ of linear transformations of $V$ preserving $\phi$ is
an explicit presentation of $\Sp_{2g}(\mb{F}_p)$. 

Now, $G$ has a faithful irreducible representation $M$ of degree $(p^g-1)/2$
 ~\cite[Proposition 2.7]{Ward}.  In fact, although we do not need this 
fact, $(p^g-1)/2$ is the minimum dimension
of a nontrivial irreducible representation of $G$~\cite{LandazuriSeitz}.
So $M$ is a faithful linear representation of $G$ of minimum possible dimension.

Write $P$ for the subgroup of $G$ consisting of all upper-triangular
matrices in $G$ with $1$s on the diagonal.  Then $P$ is a $p$-Sylow subgroup
of $G$.  So, by Karpenko--Merkurjev~\cite{km2}, 
$\ed (G;p)=\ed P=\dim W$, where $W$ is a faithful representation
of $P$ of minimal dimension.  

Let $H$ denote the set of matrices in $G$ with $1$s on the diagonal
but with $0$s everywhere else except the top row and rightmost column.
Then it is not hard to see that $H$ is an extraspecial $p$-group: its
center is the subgroup $Z$ consisting of matrices with $1$s on the
diagonal and $0$s everywhere else except the top-right entry.  And
$H/Z\cong C_p^{2g-2}$.  Moreover, it is easy to see that the center of
$P$ is also $Z$.

Since $P$ is a $p$-group with cyclic center, it follows that any
faithful representation of $P$ has a faithful irreducible constituent
(and similarly for $H$).  So, letting $W$ denote (as above) a faithful
representation of $P$ of minimal dimension, it follows that $W$ is
irreducible.  Since the degree of any complex character divides the
order of the group~\cite[Corollary 6.5.2, p.~52]{serre}, it follows
that $\dim W$ is a power of $p$. From this, and the fact that $G$ has
a faithful representation of degree $(p^g-1)/2$, it follows that
$\dim W=p^k$ with $k\leq g-1$.

On the other hand, since $H$ is an extraspecial $p$-group of order
$p^{2g-1}$, any faithful representation of $H$ has dimension at least
$p^{g-1}$~\cite[Theorem 5.5, p.~208]{GorensteinFG}.  It follows that $\dim W=p^{g-1}$, and this completes the
proof.
\end{proof}

\begin{remark}
  As mentioned in the introduction, in work in progress, Jesse
  Wolfson's PhD student Hannah Knight has computed
  $\ed_{\mathbb{C}}(G;p)$ for groups such as
  $G=\mathbf{Sp}_{2g}(\mathbb{F}_{p^r})$ for $r>1$ as well as for
  analogous orthogonal groups. See also~\cite{BMKS} for results which
  can be used to compute the essential dimension at $p$ of groups such
  as $\mathbf{GL}_n(\mathbb{F}_p)$.
\end{remark}

\bibliographystyle{amsalpha}
\def\cprime{$'$}\def\cprime{$'$}
\providecommand{\bysame}{\leavevmode\hbox to3em{\hrulefill}\thinspace}
\providecommand{\MR}{\relax\ifhmode\unskip\space\fi MR }
\providecommand{\MRhref}[2]{%
  \href{http://www.ams.org/mathscinet-getitem?mr=#1}{#2}
}
\providecommand{\href}[2]{#2}

\end{document}
